\documentclass[twoside,11pt]{article}

%

\usepackage{jmlr2e1}

\usepackage{amsmath}
\usepackage{enumerate}
\usepackage{color}


\newcommand{\new}{\newcommand}

\providecommand{\nor}[1]{\lVert{#1}\rVert}

\providecommand{\abs}[1]{\lvert{#1}\rvert}
\providecommand{\set}[1]{\{#1\}}
\providecommand{\scal}[2]{\langle{#1},{#2}\rangle}

\providecommand{\ker}[1]{\operatorname{ker}(#1)} 
\providecommand{\ran}[1]{\operatorname{ran}#1}
\providecommand{\dim}[1]{\operatorname{dim}(#1)}

\providecommand{\lspan}[1]{\operatorname{span}\{#1\}}

\providecommand{\tr}[1]{\operatorname{Tr}(#1)}

\providecommand{\wh}[1]{\widehat{#1}_n}

\new{\R}{\mathbb R}
\new{\CC}{\mathbb C}
\new{\N}{\mathbb N}
\new{\I}{\mathbb I}

\new{\hh}{\mathcal H}
\new{\kk}{\mathcal K}
\new{\cT}{\mathcal T}
\new{\sS}{\mathcal S}

\new{\la}{\lambda}
\new{\eps}{\epsilon}
\new{\ga}{\gamma}


\new{\red}[1]{#1}
\new{\blue}[1]{#1}




\ShortHeadings{A Machine Learning Approach to Optimal Tikhonov
  Regularization}{De Vito, Fornasier and Naumova} 
\firstpageno{1}

\begin{document}

\title{A Machine Learning Approach to Optimal Tikhonov Regularization
  I: Affine Manifolds }

\author{\name{Ernesto De Vito ({\em corresponding author})}
\email{devito@dima.unige.it} \\
\addr{DIMA, Universit\`a di Genova,  Via
  Dodecaneso 35, Genova,   Italy }
\AND
\name{Massimo Fornasier} \email{massimo.fornasier@ma.tum.de} \\
\addr{Technische Universit\"at M\"unchen, Fakult\"at Mathematik,
  Boltzmannstrasse 3 D-85748,\\ Garching bei M\"unchen, Germany }
\AND
\name{Valeriya Naumova}\email{valeriya@simula.no} \\
\addr{Simula Research Laboratory, Martin Linges vei 25, Fornebu, Norway}
}

\editor{}

\maketitle

\begin{abstract}
Despite a variety of available techniques the issue of the proper regularization parameter choice for inverse problems still remains one of the relevant challenges in the field.  The main difficulty lies in constructing a rule, allowing to compute the parameter from given noisy data without relying either on any a priori knowledge of the solution or on the noise level. In this paper we propose a novel method based on supervised machine learning to approximate the high-dimensional function, mapping noisy data into a good approximation to the optimal Tikhonov regularization parameter. Our assumptions are that solutions of the inverse problem are statistically distributed in a concentrated manner on (lower-dimensional) linear subspaces and the noise is sub-gaussian. We show that the number of previously observed examples for the supervised learning of  the optimal parameter mapping scales at most linearly with the dimension of the solution subspace. Then we also provide explicit error bounds on the accuracy of the approximated parameter and the corresponding regularization solution.  
Even though the results are more of theoretical nature, we present a recipe for the practical implementation of the approach, we discuss its computational complexity, and provide numerical experiments confirming the theoretical results. We also outline interesting directions for future research  with some preliminary results, confirming their feasibility. 
\end{abstract}

\begin{keywords}
Tikhonov regularization, parameter choice rule, sub-gaussian vectors,
high dimensional function approximations, concentration inequalities. 

\end{keywords}

\section{Introduction}

In many practical problems, one cannot observe directly the quantities
of most interest; instead their values have to be inferred from their effects
on observable quantities. When this relationship between observable
$Y$ and the quantity of interest $X$ is (approximately) linear, as it is in surprisingly many
cases, the situation can be modeled mathematically by the equation
\begin{equation}\label{eq:11}
Y= A X
\end{equation}
for $A$ being a linear operator model. If $A$ is a ``nice'', easily invertible operator, and if the data
$Y$ are noiseless and complete, then finding $X$ is a trivial task. Often, however, the mapping $A$
is ill-conditioned or not invertible. Moreover, typically \eqref{eq:11} is only an
idealized version, which completely neglects any presence of noise or disturbances; a more accurate model
is
\begin{equation}\label{eq:12}
Y= A X+ \eta,
\end{equation}
in which the data are corrupted by an (unknown) noise.  In order to deal with
this type of reconstruction problem a regularization
mechanism is required \citep{zbMATH00936298}. 

Regularization techniques attempt to incorporate as much as possible an (often vague) a priori knowledge on the nature of the solution $X$.  A well-known assumption which is often
used to regularize inverse problems is that the solution belongs to some ball of a suitable Banach space.

Regularization theory has shown to play its major role for solving infinite dimensional inverse problems. In this paper, however,
we consider finite dimensional problems, since we intend to use probabilistic techniques 
for which the Euclidean space is the most standard setting. Accordingly, we assume the solution vector $X \in \mathbb R^d$, the linear model $A \in \mathbb R^{m\times d}$, and the  datum $Y \in \mathbb R^m$. In the following we denote with $\|Z\|$ the Euclidean norm of a vector $Z \in  \mathbb R^N$. One of the most  widely used regularization approaches is realized by minimizing the following, so-called, Tikhonov functional 
\begin{equation}\label{eq:13}
\min_{z\in\R^d} \nor{Az-Y}^2 + \alpha\, \nor{z}^2. 
\end{equation}
with $\alpha \in (0,+\infty)$. The {\it regularized solution} $Z^\alpha:=Z^\alpha(Y)$ of such minimization procedure is unique. In this context, the regularization scheme represents a trade-off between the accuracy of fitting the data $Y$ and the complexity of the solution, measured by a 
ball in $\mathbb R^d$ with radius  depending on the {\it regularization parameter} $\alpha$. 
Therefore, the choice of the regularization parameter $\alpha$ is very crucial to identify the best possible regularized solution, which does not overfit the noise. This issue still remains one of the most delicate aspects of this approach and other regularization schemes.  Clearly the best possible parameter minimizes the discrepancy between $Z^\alpha$ and the solution $X$
$$
\alpha^* = \arg \min_{\alpha \in (0,+\infty)} \| Z^\alpha -X \|.
$$
Unfortunately, we usually have neither access to the solution $X$ nor to information about the noise, for instance, we might not be aware of the noise level $\|\eta\|$. Hence, for determining a possible good approximation to the optimal regularization parameter several approaches have been proposed, which can be categorized into three classes
\begin{itemize}
\item A priori parameter choice  rules based on the noise level and some known ``smoothness'' of the solution encoded in terms, e.g., of the so-called {\it source condition} \citep{zbMATH00936298};
\item A posteriori parameter  choice rules based on the datum $Y$ and the noise level;
\item A posteriori parameter  choice rules based exclusively on the datum $Y$ or, the so-called, heuristic parameter choice rules.
\end{itemize}
For the latter two categories there are by now a multitude of approaches. Below we recall  the most used and relevant of them, indicating in square brackets their alternative names, accepted in different scientific communities. In most cases, the names we provide are the descriptive names originally given to the methods. However, in a few cases, there was no original name, and, to achieve consistency in the naming, we have chosen an appropriate one, reflecting the nature of the method. We mention, for instance,
(transformed/modified) discrepancy principle [Raus-Gfrerer rule, minimum bound method]; monotone error rule; (fast/hardened) balancing principle also for white noise; quasi-optimality criterion; L-curve method; modified discrepancy partner rule [Hanke-Raus rule]; extrapolated error method; normalized cumulative periodogram method; residual method; generalized maximum likelihood; (robust/strong robust/modified) generalized cross-validation. Considering the large number of available parameter choice methods, there are relatively few comparative studies and we refer to \citep{zbMATH05929140} for a rather comprehensive discussion on their differences, pros and contra. One of the features which is common to most of the a posteriori parameter choice rules is the need of solving \eqref{eq:13} multiple times for different values of the parameters $\alpha$, often selected out of a conveniently pre-defined grid.
\\

In this paper, we intend to study a novel, {data-driven, regularization method, which also yields approximations to the  optimal parameter in Tikhonov regularization.}  After an off-line learning phase, whose complexity scales at most algebraically with the dimensionality of the problem, our method does not require any additional knowledge of the noise level; the computation of a near-optimal regularization parameter can be performed very efficiently { by solving the regularization problem \eqref{eq:13} only a moderated amount times, see Section \ref{sec:discussion} for a discussion on the computational complexity. In particular cases, no solution of  \eqref{eq:13} is actually needed, see Section \ref{sec:5}. Not being based on the noise level, our approach  fits into the class of {\it heuristic parameter choice rules} \citep{zbMATH06288322}.}
The approach aims at employing the framework of supervised machine learning to the problem of approximating the high-dimensional function, which maps noisy data into the corresponding optimal regularization parameter. More precisely, we assume that we are allowed to see a certain number $n$ of examples of solutions $X_i$ and corresponding noisy data $Y_i = A X_i + \eta_i$, for $i=1,\dots,n$. For all of these examples, we are clearly capable to compute the optimal regularization parameters as in the following scheme
\begin{eqnarray*}
(X_1,Y_1) &\to& \alpha_1^*= \arg \min_{\alpha \in (0,+\infty)} \| Z^\alpha(Y_1) -X_1 \|\\
(X_2,Y_2) &\to& \alpha_2^*= \arg \min_{\alpha \in (0,+\infty)} \| Z^\alpha(Y_2) -X_2 \| \\
\dots && \dots \\
(X_n,Y_n) &\to& \alpha_n^*= \arg \min_{\alpha \in (0,+\infty)} \| Z^\alpha(Y_n) -X_n \|\\
(?? ,Y) &\to&\bar \alpha
\end{eqnarray*}
Denote $\mu$ the joint distribution of the empirical samples $(Y_1, \alpha^*_1), \dots, (Y_n, \alpha^*_n)$. Were  its conditional distribution $\mu (\cdot\mid Y)$ with respect to the first variable $Y$ very much concentrated (for instance, when $\int_0^\infty (\alpha - \bar \alpha)^q d \mu (\alpha \mid Y)$ is very small for $q \geq 1$ and for variable $Y$), then we could design a proper regression function $$\mathcal R: Y \mapsto \bar \alpha:= \mathcal R(Y) = \int_0^\infty \alpha d \mu(\alpha \mid Y).$$
Such a mapping would allow us, to a given new datum $Y$ (without given solution!), to associate the corresponding  parameter $\bar \alpha$ not too far from the true optimal one $\alpha^*$, at least with high probability. We illustrate schematically this theoretical framework in Figure \ref{introfig}.

\begin{figure}[h]
  \centering\
  \includegraphics[width=0.6\textwidth]{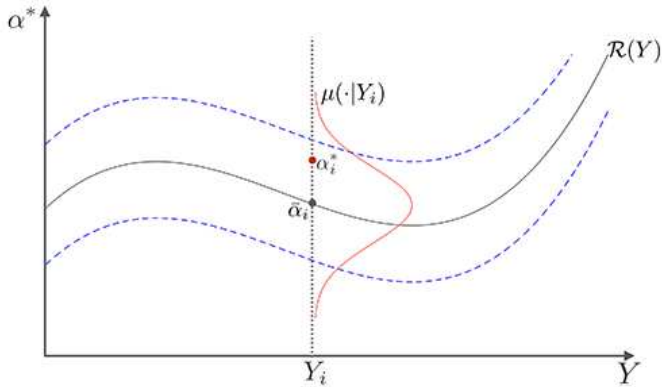}   
  \caption{Learning optimal regularization parameters from previously observed samples by approximation of the regression function $\mathcal R$.}
\label{introfig}
\end{figure}

At a first glance, this setting may seem quite hopeless. First of all, one should establish the concentration of the conditional distribution generating $\alpha^*$ given $Y$. Secondly, even if we  assume that the regression function $\mathcal R$ is very smooth, the vectors $Y$ belong to the space $\mathbb R^m$ and the number of observations $n$ required to learn such a function need to scale exponentially with the dimension $m$ \citep{zbMATH05659507}.
It is clear that we cannot address neither of the above issues in general. The only hope is that the solutions are statistically distributed in a concentrated manner over smooth sets of lower dimension $h \ll m$ and the noise has also a concentrated distribution, so that the corresponding data $Y$ are concentrated around lower-dimensional sets as well. And luckily these two assumptions are to a certain extent realistic.

By now, the assumption that the  possible solutions belong to a lower-dimensional set of $\mathbb R^d$ has become an important prior for many signal and image processing tasks. For instance, were solutions natural images, then it is known that images can be represented as nearly-sparse coefficient vectors with respect to shearlets expansions \citep{zbMATH05993957}. { Hence, in this case the set of possible solutions can be stylized as a union of lower-dimensional linear subspaces, consisting of {\it sparse} vectors \citep{zbMATH05570252}}. In other situations, it is known that the solution set can be stylized, at least locally, { as a smooth lower-dimensional nonlinear manifold $\mathcal V$ \citep{zbMATH06031284, zbMATH06454789, zbMATH06770640}}. Also in this case, at least locally, it is possible to approximate the solution set by means of affine lower-dimensional sets, representing tangent spaces to the manifold. 
Hence, the a priori knowledge that the solution is belonging to some special (often nonlinear) set should also be taken into account when designing the regularization method.
\\

In this paper, we want to show very rigorously how  one can construct, from a relatively small number of previously observed examples, an approximation $\widehat{\mathcal{R}}$ to the regression function $\mathcal R$, which is mapping a noisy datum into a good approximation to the optimal Tikhonov regularization parameter. To this end, we assume the solutions to be distributed sub-gaussianly over a  {\it linear subspace} $\mathcal V \subset \mathbb R^d$ of dimension $h \ll m$ and the noise $\eta$ to be also sub-gaussian. 
The first statistical assumption is perhaps mostly technical to allow us to provide rigorous estimates.
Let us describe the method of computation as follows.
We introduce the $\red{m\times m}$ noncentered covariance matrix built from the noisy measurements 
\begin{equation*}
\wh{\Sigma}= \frac{1}{n} \sum_{i=1}^n Y_i\otimes Y_i,\label{eq:60}
\end{equation*}
and we denote by $\wh{\Pi}$   the projections onto the
vector space spanned by the first most relevant eigenvectors of $\wh{\Sigma}$.
Furthermore, we set
$\wh{\alpha}\in (0,+\infty)$ as the minimizer of 
\[
\min_{\alpha\in (0,+\infty)} \nor{Z^\alpha-A^{\dagger} \wh{\Pi} Y}^2,
\]
where $A^{\dagger}$ is the pseudo-inverse. We define 
$$
\widehat{\mathcal{R}}(Y) = \wh{\alpha}
$$
and we claim that this is actually a good approximation, up to noise level, to $\mathcal R$ as soon as $n$ is large enough, without incurring in the curse of dimensionality (i.e., without exponential dependency of the computational complexity on $d$). More precisely, we prove that,
for a given $\tau>0$, with probability greater than $1 -6 \mathrm{e}^{-\tau^2}$, we have that 
\[
   \nor{Z^{\wh{\alpha}}-X }   \leq  \nor{Z^{\alpha^*}-X} + \frac{1}{\sigma_d}B(n, \tau,\sigma), 
\]
where $\sigma_d$ is the smallest positive singular value of $A$.
Let us stress that $B(n, \tau,\sigma)$ gets actually small for small $\sigma$ and for  $n =\mathcal O(m\times h)$ (see formula \eqref{eq:59}) and $\widehat{\mathcal{R}}(Y) = \wh{\alpha}$ is $\sigma$-optimal.
We  provide an explicit expression for $B$ in Proposition \ref{prop:boundB}. In the special case where $A= I$ we derive a bound on the difference between the learned parameter $\wh{\alpha}$ and the optimal parameter $\alpha^*$, see Theorem \ref{thm:main}, justifying even more precisely the approximation $\widehat{\mathcal{R}}(Y) = \wh{\alpha} \approx \alpha^* = \mathcal R(Y)$.


The paper is organized as follows: After introducing some notation and problem set-up in the next section, we provide the accuracy bounds on the learned estimators with respect to their distribution dependent counterparts in Section 3. For the special case $A = I$ we provide an explicit bound on the difference between the learned and the optimal regularization parameter and discuss the amount of samples needed for an accurate learning in Section 4. We also exemplify the presented theoretical results with a few numerical illustrations. Section 5  provides explicit formulas by means of numerical linearization for the parameter learning. 
Section 6 offers a snapshot of the main contributions and presents a list of open questions for future work.
Finally, Appendix A and Appendix B contain some background information
on perturbation theory for compact operators, the sub-gaussian random variables, and proofs of some technical theorems, which are valuable for understanding the scope of the paper.

\section{Setting}

This section presents some background material and sets the notation for the rest of the work.
First, we fix some notation.  The Euclidean norm of a vector $v$ is denoted
by $\nor{v}$ and the Euclidean scalar product between two vectors
$v,w$ by $\scal{v}{w}$.  We denote with $S^{d-1}$ the Euclidean unit sphere in $\mathbb R^d$. If $M$ is a matrix, $M^T$ denotes its
transpose, $M^\dagger$ the pseudo-inverse,
$M^{\dagger k}=(M^{\dagger})^k$ and $\nor{M}$ its spectral
norm. Furthermore, $\ker{M}$ and $\ran{M}$ are the null space and the
range of $M$ respectively. For a square-matrix  $M$, we use $\tr{M}$ to denote its trace. If
$v$ and $w$ are vectors (possibly of different length), $v\otimes w$
is the rank one matrix with entries $(v\otimes w)_{ij}=v_i w_j$.

Given a random vector $\xi\in\R^d$,  its noncentered covariance matrix is denoted by
\[ \Sigma_{\xi}=\mathbb E[ \xi\otimes \xi],\]
which is a positive matrix satisfying the following property
\begin{equation}
\ran{\Sigma_\xi}=(\ker{\Sigma_\xi})^\perp=\lspan{x\in\R^d\mid \mathbb P[ \xi\in
  B(x,r)]>0\ \forall r>0 }\label{eq:54},
\end{equation}
here $B(x,r)$ denotes the ball of radius $r$ with the center at $x$. A random vector $\xi$ is called sub-gaussian if
  \begin{equation}
    \label{eq:A2a}
    \nor{\xi}_{\psi_2}:= \sup_{v\in S^{d-1}}\sup_{q\geq 1} q^{-\frac{1}{2}}\mathbb
    E[ \abs{\scal{\xi}{v}}^q]^{\frac{1}{q}} <+\infty.
  \end{equation}
  The value $\nor{\xi}_{\psi_2}$ is called the sub-gaussian norm of
  $\xi$ and the space of sub-gaussian vectors becomes a normed vector space
  \citep{ver12}. Appendix~\ref{sec:sub-gaussian-vectors} reviews  some basic  properties about sub-gaussian vectors.

We consider the  following class of inverse problems.
\begin{assumption}\label{ass}
In the statistical linear inverse problem 
\[Y= A X + \sigma W,\] 
the following conditions hold true:
  \begin{enumerate}[a)]
  \item $A$ is an $m\times d$-matrix with norm  $\nor{A}=1$;
  \item the signal $X\in \R^d$ is a sub-gaussian random vector with
    $\nor{X}_{\psi_2}=1$; 
  \item\label{item:noise} the noise $W \in \R^m$ is a sub-gaussian centered random
    vector independent of $X$ with $\nor{W}_{\psi_2}=1/\sqrt{2}$ and
    with the noise level $0<\sigma<\sqrt{2}$; 
  \item\label{item:1} the covariance matrix $\Sigma_X$ of $X$ has a low
    rank matrix, {\em i.e.,}
    \[ \operatorname{rank}(\Sigma_X)=h\ll d. \]
  \end{enumerate}
\end{assumption}
We add some comments on the above conditions. 
The normalisation assumptions on $\nor{A}$, $\nor{X}_{\psi_2}$ and
$\nor{W}_{\psi_2}$ are stated only to simplify the 
bounds. They can always be satisfied by  rescaling $A$, $X$ and
$W$ and our results hold true by replacing $\sigma$ with 
$\sqrt{2} \nor{W}_{\psi_2}\nor{A}^{-1}\nor{X}^{-1}_{\psi_2}   \sigma$.  The
upper bound on $\sigma$ reflects the intuition that $\sigma W$ is a
small perturbation of the noiseless problem. 
 
Condition~\ref{item:1}) means that $X$ spans a low dimensional
subspace of $\R^d$. Indeed, by~\eqref{eq:54} condition~\ref{item:1}) is equivalent to the
fact that the vector space
\begin{equation}
\mathcal V=  \label{eq:21} \ran{\Sigma_X}=\lspan{ x\in\R^d\mid \mathbb P[X\in
B(x,r)]>0 \text{ for all } r>0 }
\end{equation} 
is an $h$-dimensional subspace and $h$ is the dimension
of the minimal subspace containing $X$ with probability 1, {\em i.e.,}
\[ 
h = \min _{\mathcal K}\dim{\mathcal K },
\] 
where the minimum is taken over all subspaces $\mathcal
K\subset\R^d$ such that $\mathbb P[X\in\mathcal K]=1$. 

We write $a\lesssim b$ if there exists an absolute  constant $C$ such
that $a\leq C b$. By {\em absolute} we mean that it  holds for all the
problems $Y=AX+\sigma W$ satisfying Assumption~\ref{ass}, in
particular, it is independent of $d,m$ and $h$.

The datum $Y$ depends only on the projection
$X^{\dagger}$ of $X$ onto $\ker{A}^\perp$ and $Z^\alpha$ as solutions of \eqref{eq:13} also belong to $\ker{A}^\perp$. Therefore,  we can always assume, without loss of generality, for the rest of the paper that $A$ is {\it injective} by
replacing $X$ with  $X^\dagger$, which is a sub-gaussian random
vector,  and $\R^d$  with $\ker{A}^\perp$. 

Since $A$ is injective, $\operatorname{rank}(A)=d$ and we define the
singular value decomposition of $A$ by $(u_i,v_i,\sigma_i)_{i=1}^d,$
so that $A=U D V^T$ or 
$$
A v_i = \sigma_i u_i, \quad i=1,\dots,d,
$$
where  $\sigma_1 \geq \sigma_2 \geq \dots \geq \sigma_d>0$. Since $\nor{A}=1$,
clearly $\sigma_1=1$. Furthermore, let $Q$ be the projection onto the
$\lspan{u_1,\ldots,u_d}$,  so that  $Q A=A$, and we have the decomposition
\begin{equation}\label{eq:projdecomp}
Q = A A^\dagger.
\end{equation}
Recalling~\eqref{eq:21}, since $A$ is now assumed injective and 
\[ \Sigma_{AX}   = \mathbb E[ AX\otimes AX] =A \Sigma_X A^T,\] 
then 
\begin{equation}
\mathcal
W= \ran{\Sigma_{AX}}=(\ker{\Sigma_{AX}})^\perp=A\mathcal V,
\label{eq:61}
\end{equation} 
and, by condition~\ref{item:1}) in Assumption \ref{ass}, we have as well $\dim{\mathcal W}=h$. 

We denote by $\Pi$ the projection onto $\mathcal W$ and by
\begin{equation}\label{eq:pdef}
p=\max\set{ i\in\set{1,\ldots,d} \mid \Pi u_i\neq 0},
\end{equation}
so that, with probability $1$, 
\begin{equation}\label{eq:15}
\Pi A X = AX \quad \mbox{ and } \quad  X= \sum_{i=1}^p \langle X, v_i \rangle v_i.
\end{equation}
Finally, the random vectors $\eta = \sigma W$, $AX,$ and $Y$ are
sub-gaussian and take value in $\R^m$, $\mathcal W,$ and $\R^m$,
respectively, with
\begin{equation}
  \label{eq:22}
\|  A X\|_{\psi_2} \leq \|A^T\| \| X\|_{\psi_2} = 1 \qquad  \|
 Y\|_{\psi_2} \leq \|A X\|_{\psi_2}+\sigma\nor{W}_{\psi_2} \leq 2
\end{equation}
since, by Assumption \ref{ass}, $\|A\|=1$ and $\sigma\leq \sqrt{2}$.

For any $t\in [0,1]$ we set $Z^t$ as the  solution
of the minimization problem  
\begin{equation}
  \label{eq:23}
   \min_{z\in\R^d} \left(t\, \nor{A z-Y}^2 + (1-t)\, \nor{z}^2\right),
\end{equation}
which is the solution of the  Tikhonov functional 
\[
\min_{z\in\R^d} \nor{Az-Y}^2 + \alpha\, \nor{z}^2. 
\]
with $\alpha=(1-t)/t\in [0,+\infty]$. 

For $t<1$, the solution is unique, for $t=1$ the minimizer  is not unique and we set
$$Z^1 = A^\dag Y.$$

The explicit form of the solution of  \eqref{eq:23} is given by
\begin{align}
Z^t & =  t (t A^T A + (1-t)  I)^{-1} A^T Y \label{eq:1}\\
      & =  \sum_{i=1}^d \frac{t \sigma_i}{ t \sigma_i^2 + (1-t)}
        \langle Y, u_i \rangle v_i  \nonumber\\
      & =  \sum_{i=1}^d \left( \frac{t \sigma_i^2}{ t \sigma_i^2 +
        (1-t)}	\langle X, v_i \rangle + \frac{t \sigma_i}{ t
        \sigma_i^2 + (1-t)} \langle \eta, u_i \rangle	 \right) v_i,\nonumber
\end{align}
which  shows that $Z^t$ is also a sub-gaussian random vector.

We seek for the optimal
parameter $t^* \in [0,1]$ that minimizes the
reconstruction error 
\begin{equation}\label{optparam}
\min_{t\in [0,1]} \nor{Z^t-X}^2.
\end{equation}
Since $X$ is not known,  the optimal parameter $t^*$ can not be
computed.  We assume that we have at disposal a training set of
$n$-independent noisy data
\[
Y_1,\ldots,Y_n,
\]
where $Y_i = A X_i+\sigma W_i$, and each pair $(X_i,W_i)$ is  distributed as $(X,W)$, for $i=1,\dots,n$.

We introduce the $\red{m\times m}$ empirical covariance matrix 
\begin{equation}
\wh{\Sigma}= \frac{1}{n} \sum_{i=1}^n Y_i\otimes Y_i,\label{eq:60}
\end{equation}
and we denote by $\wh{\Pi}$   the projections onto the
vector space spanned by the first $h$-eigenvectors of $\wh{\Sigma}$, where
the corresponding (repeated) eigenvalues are ordered in a nonincreasing way. 

\begin{remark}
The well-posedness of the empirical realization  $\wh{\Pi}$  in terms of spectral gap at the $h$-th eigenvalue will be given in Theorem~$\ref{thm:QQn}$, where we show that for $n$  large enough the $h+1$-th eigenvalue
 is strictly smaller than the $h$-th eigenvalue. 
\end{remark}

We define the empirical estimators of $X$ and $\eta$ as
\begin{equation}
  \label{eq:6}
  \widehat{X}= A^{\dagger} \wh{\Pi} Y \qquad\text{and} \qquad \widehat{\eta}=(Y- \wh{\Pi} Y),
\end{equation}

so that, by Equation~\eqref{eq:projdecomp},
\begin{equation}
  \label{eq:8}
  A \widehat{X} + Q \widehat{\eta}= Q Y.
\end{equation}

 Furthermore, we set
$\wh{t}\in [0,1]$ as the minimizer of 
\[
\min_{t\in [0,1]} \nor{Z^t-\widehat{X}}^2.
\]
If $\widehat{X}$ is close to $X$, we expect that the solution $Z^{\wh{t}}$ has a
reconstruction error close to the minimum value. In the following
sections, we study the statistical properties of $\wh{t}$. However, we
 first provide some a priori information on the
optimal regularization parameter $t^*$.

\subsection{Distribution dependent quantities}
We define the  function $t\mapsto
\nor{R(t)}^2$, where 
\[
R(t)= Z^t- X\qquad t\in [0,1]
\]
is  the reconstruction error vector. Clearly, the function $t\mapsto \nor{R(t)}^2$ is continuous,
so that a  global  minimizer $t^*$ always exists in the compact interval $[0,1]$. 

Define for all $t\in [0,1]$ the $d\times d$ matrix 
\begin{alignat*}{1}
  B(t) & =t A^TA+(1-t) I= \sum_{i=1}^d (t\sigma_i^2 + 1-t)\,
  v_i\otimes v_i,
\end{alignat*}
which is invertible since $A$ is injective and its  inverse is 
\[
B(t)^{-1} = \sum_{i=1}^d \frac{1}{t\sigma_i^2 + 1-t}\,
  v_i\otimes v_i .
\]
Furthermore, $B(t)$ and $B(t)^{-1}$ are smooth functions of the parameter $t$ and
\begin{equation}
B'(t)= (A^TA- I)\qquad  (B(t)^{-1})'= - B(t)^{-2} (A^TA-
I).\label{eq:49}
\end{equation}
Since $Y=AX+\eta$, expression~\eqref{eq:1} gives
\begin{alignat}{1}
  R(t) & = t B(t)^{-1} A^T Y -X\label{eq:2} \\
        & = t B(t)^{-1} A^T (AX+ \eta) - X \nonumber \\
       & =  B(t)^{-1} ( t A^TAX -  B(t) X + t A^T\eta) \nonumber \\
        & =  B(t)^{-1} ( -(1-t) X + t A^T\eta). \nonumber
\end{alignat}
Hence,
\begin{align*}
\| R(t) \|^2 & =\nor{ B(t)^{-1} ( -(1-t) X + t A^T\eta)}^2 \label{eq:28}\\
& = 
\sum_{i =1}^d \left(  \frac{-(1-t)\xi_i + t \sigma_i \nu_i }{t \sigma_i^2 + (1-t)}   \right)^2,\nonumber
\end{align*}
where for all $i=1,\ldots,d$
\[
\xi_i= \langle X, v_i \rangle \qquad \nu_i= \langle \eta, u_i \rangle.
\]
In order to
characterize $t^*$  we may want to seek it among the zeros of the following
function
\[
H (t) =  \frac12 \frac{d}{dt} \| Z^t - X \|^2= \scal{R(t)}{R'(t)}.
\]
Taking into account~\eqref{eq:49}, the differentiation of~\eqref{eq:2} is given by
\begin{alignat}{1}
  R'(t) & =  B(t)^{-1} A^T Y - t B(t)^{-2}(A^TA-I) A^TY \label{eq:3} \\
        & =  B(t)^{-2} ( B(t)- t (A^TA-I)) A^TY \nonumber\\
&  = B(t)^{-2}A^TY, \nonumber
\end{alignat}
so that
\begin{alignat}{1}
  \label{eq:4}
   H(t) & =  \scal{AB(t)^{-3}  ( -(1-t) X + t A^T\eta)  }{A X +  \eta } \\
          & =   \sum_{i=1}^d \sigma_i\frac{-(1-t)\xi_i + t \sigma_i \nu_i }{(t \sigma_i^2 + (1-t))^3} (\xi_i \sigma_i + \nu_i) \nonumber \\
	& = \sum_{i=1}^d  \sigma_i\xi_i (\xi_i \sigma_i+ \nu_i)
        \frac{(\sigma_i \nu_i \xi_i^{-1}+1)t -1 }{ (1 - (1-\sigma_i^2)
          t)^3} \nonumber \\
  & = \sum_{i=1}^d \sigma_i\alpha_i h_i (t), \nonumber
\end{alignat}
where $\alpha_i =\xi_i( \sigma_i \xi_i + \nu_i)$ and 
$h_i(t) = \frac{(\sigma_i \nu_i \xi_i^{-1}+1)t -1 }{ (1 - (1-\sigma_i^2) t)^3}.$ 

We observe that 
\begin{enumerate}[a)]
\item if $t=0$ ({\it i.e.,} $\alpha=+\infty$), $B(0)=I$, then
\[ H(0)= -\nor{AX}^2 \red{-} \scal{AX}{\eta},\] 
which is negative if $\nor{\Pi \eta}\leq \nor{AX}$, {\em i.e.,} for 
\begin{equation*}
    \sigma\leq \frac{ \nor{AX}}{\nor{\Pi W}}.
  \end{equation*}
 Furthermore, by construction,
\[  \mathbb E[H(0)]= -\tr{\Sigma_{AX}} < 0; \]
\item if $t=1$ ({\it i.e.,} $\alpha=0$), $B(\red{1})=A^TA$ and
  \begin{alignat*}{1}
    H(1) & =    \scal{A(A^TA)^{-3} A^T\eta  }{A X +  \eta }  \\
& = \nor{(AA^T)^{\dagger} \eta}^2 +\scal{(AA^T)^{\dagger} \eta}{ (A^T)^{\dagger} X},
  \end{alignat*}
which is positive if $\nor{(AA^T)^{\dagger} \eta}\geq \nor{(A^T)^{\dagger}
  X}$, for example, when 
\[ 
\sigma \geq \sigma_d   \frac{
  \nor{X}}{ \abs{\scal{W}{u_d}}}.\]
Furthermore, by construction,
\[  \mathbb E[H(1)]= \tr{\Sigma_{ (AA^T)^{\dagger}\eta }} >0.\]
\end{enumerate}

Hence, if the noise level satisfies 
\[
 \frac{\nor{X}}{ \abs{\scal{W}{u_d}}}\leq \sigma \leq  \frac{ \nor{AX}}{\nor{\Pi W}}
\]
the minimizer $t^*$ is in the open interval $(0,1)$ and it is a
zero of $H(t)$. If $\sigma$ is too small, there is no need
of regularization since we are dealing with a finite dimensional
problem. On the opposite side, if $\sigma$ is too big, the best
solution is the trivial one, {\em i.e.,} $Z^{t^*}=0$.

%
%

\subsection{Empirical quantities}

We replace $X$ and $\eta$ with their empirical counterparts defined
in~\eqref{eq:6}. By Equation~\eqref{eq:8} and reasoning as in  Equation \eqref{eq:2}, we obtain
\begin{alignat*}{1}
  \wh{R}(t)&= Z^t - \widehat{X} \\
& = t B(t)^{-1} A^TQY  - \widehat{X} \\
& =  B(t)^{-1} (-(1-t) \widehat{X}+ t A^T\widehat{\eta}), 
\end{alignat*}
and
\begin{align*}
\| \wh{R}(t) \|^2 & =\nor{ B(t)^{-1} ( -(1-t) \widehat{X}
                            + t A^T\widehat{\eta})}^2 \label{eq:28}\\ 
& =  \sum_{i =1}^d \left(  \frac{-(1-t)\widehat{\xi}_i + t \sigma_i
  \widehat{\nu}_i }{t \sigma_i^2 + (1-t)}   \right)^2,\nonumber 
\end{align*}
where for all $i=1,\ldots,d$
\[ 
\widehat{\xi}_i=\scal{ \widehat{X}}{v_i}\qquad\text{and} \qquad
\widehat{\nu}_i=\scal{\widehat{\eta}}{u_i} . 
\]
Clearly,
\begin{equation}
  \label{eq:5}
  \wh{R}'(t) = R'(t)=B(t)^{-2}A^TQY.
\end{equation}
From~\eqref{eq:4}, we get
\begin{alignat}{1}
 \label{eq:7}
   \wh{H}(t) &  = \scal{\wh{R}(t)}{\wh{R'}(t)}\\
&  = \scal{B(t)^{-3}  ( -(1-t) \widehat{X} + t A^T\widehat{\eta})
   }{A^TA \widehat{X} +  A^T\widehat{\eta} ) } \nonumber\\
 & =  \sum_{i=1}^d \frac{-(1-t) \widehat\xi_i +t\sigma_i \widehat \nu_i }{ (1 -
   (1-\sigma_i^2) t)^3}  (\widehat\xi_i \sigma_i^2 + \sigma_i
 \widehat\nu_i) \nonumber, \\
& = \sum_{i=1}^d \sigma_i\widehat{\alpha}_i \widehat{h}_i (t)\nonumber,
\end{alignat}
where $\widehat{\alpha}_i =\widehat{\xi}_i( \sigma_i \widehat{\xi}_i +
\widehat{\nu}_i)$  and 
$\widehat{h}_i(t) = \frac{(\sigma_i \widehat{\nu}_i \widehat{\xi}_i^{-1}+1)t -1 }{ (1 - (1-\sigma_i^2) t)^3}.$

An alternative form in terms of $Y$   and
$\wh{\Pi}$, which can be useful as a different numerical implementation, is 
\begin{alignat*}{1}
  \label{eq:29}
 \wh{H}(t)  & = \scal{B(t)^{-1} (t A^T(Y- \wh{\Pi} Y) -(1-t)
   \red{A^{\dagger}} \wh{\Pi} Y)}{B(t)^{-2}  A^TY } \\
& = \scal{ t AA^T(Y- \wh{\Pi} Y) -(1-t)
   Q\wh{\Pi} Y }{(t AA^T+(1-t)I)^{\dagger 3} Q Y  } \nonumber \\
& = \scal{ t AA^T(Y- \wh{\Pi} Y) -(1-t)
   \wh{\Pi} Y }{(t AA^T+(1-t)I)^{\dagger 3} Q Y  } . \nonumber
\end{alignat*}
As for $t^*$, the   minimizer $\wh{t}$ of the function $t\mapsto \nor{\wh{R}(t)}^2$ 
always exists in $[0,1]$ and, for $\sigma$ in the range of
  interest, it is in the open
interval $(0,1)$, so that it is a zero of the function $\wh{H}(t)$.

\section{Concentration inequalities}
In this section, we bound the difference between the empirical
estimators and their distribution dependent counterparts.

By~\eqref{eq:61} and item~\ref{item:1}) of Assumption~\ref{ass}, the
covariance matrix $\Sigma_{AX}$ has rank $h$ and,
we set $\la_{\min}$ to be the smallest non-zero eigenvalue of
$\Sigma_{AX}$.   Furthermore, we denote by  $\Pi^{Y}$ the projection from $\mathbb R^m$
onto the vector space  spanned by the eigenvectors of $\Sigma_Y$ 
\red{with corresponding eigenvalue greater than $\la_{\min}/2$}.

The following proposition shows that $\Pi^{Y}$ is close to $\Pi$ if
the noise level is small enough.
\begin{proposition}\label{thm:QQ}
If $\sigma^2<{\la_{\min}}/4$, then $\dim{\ran{\Pi^{Y}}}=h$ and
\begin{equation}
  \label{eq:24}
 \nor{\Pi^{Y}- \Pi} \le \frac{2 \sigma^2}{\la_{\min}} .
\end{equation}
\end{proposition}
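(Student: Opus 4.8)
\emph{Proof proposal.} The plan is to realise $\Sigma_Y$ as a small symmetric perturbation of $\Sigma_{AX}$ and then to control the two spectral projections $\Pi$ and $\Pi^Y$ by Weyl's inequality (for the ranks) and by a Davis--Kahan $\sin\Theta$ bound (for the operator-norm distance). Since $W$ is centered and independent of $X$, the cross terms in $\mathbb E[Y\otimes Y]$ vanish, so $\Sigma_Y=\Sigma_{AX}+\sigma^2\Sigma_W$; set $N:=\Sigma_Y-\Sigma_{AX}=\sigma^2\Sigma_W$. Taking $q=2$ in the definition \eqref{eq:A2a} of the sub-gaussian norm and using $\nor{W}_{\psi_2}=1/\sqrt2$ gives $\mathbb E[\scal{W}{v}^2]\le 1$ for every $v\in S^{m-1}$, hence $\nor{\Sigma_W}\le 1$ and $\nor{N}\le\sigma^2<\la_{\min}/4$.

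For the dimension statement I would order the eigenvalues of $\Sigma_{AX}$ and of $\Sigma_Y$ nonincreasingly; by Weyl's inequality the $i$-th eigenvalue of $\Sigma_Y$ differs from that of $\Sigma_{AX}$ by at most $\nor{N}\le\sigma^2$. Since $\Sigma_{AX}$ has rank $h$ (as recalled just before the statement), its $h$-th eigenvalue equals $\la_{\min}$ and its $(h{+}1)$-th equals $0$; hence the $h$-th eigenvalue of $\Sigma_Y$ is $\ge\la_{\min}-\sigma^2>\tfrac{3}{4}\la_{\min}$ and the $(h{+}1)$-th is $\le\sigma^2<\tfrac{1}{4}\la_{\min}$. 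Thus exactly $h$ eigenvalues of $\Sigma_Y$ exceed the threshold $\la_{\min}/2$ and none equals it, so $\Pi^Y$ is well defined and coincides with the orthogonal projection onto the span of the top $h$ eigenvectors of $\Sigma_Y$; in particular $\dim{\ran{\Pi^{Y}}}=h$ and $\Pi^Y$ has the same rank as $\Pi$.

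For the bound \eqref{eq:24} I would apply the perturbation estimate for spectral projections from Appendix~A (at bottom this is the Riesz-projection identity $\Pi^Y-\Pi=\frac{1}{2\pi i}\oint_\Gamma(z-\Sigma_Y)^{-1}N(z-\Sigma_{AX})^{-1}\,dz$ over a contour $\Gamma$ enclosing $[\la_{\min}-\sigma^2,\nor{\Sigma_{AX}}+\sigma^2]$ but disjoint from $[0,\sigma^2]$). The block of $\Sigma_{AX}$-eigenvalues kept by $\Pi$ lies in $[\la_{\min},\infty)$ while the block of $\Sigma_Y$-eigenvalues discarded by $I-\Pi^Y$ lies in $[0,\sigma^2]$; symmetrically the eigenvalue of $\Sigma_{AX}$ discarded by $I-\Pi$ is $0$ while the block kept by $\Pi^Y$ lies in $[\la_{\min}-\sigma^2,\infty)$. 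In both pairings the separation is $\delta=\la_{\min}-\sigma^2$, so $\nor{(I-\Pi^Y)\Pi}\le\nor{N}/\delta$ and $\nor{\Pi^Y(I-\Pi)}\le\nor{N}/\delta$, and since $\Pi,\Pi^Y$ are orthogonal projections of equal rank these two quantities both equal $\nor{\Pi^Y-\Pi}$. Hence $\nor{\Pi^Y-\Pi}\le\sigma^2/(\la_{\min}-\sigma^2)$, and as $\sigma^2<\la_{\min}/4$ (indeed $\sigma^2<\la_{\min}/2$ suffices) one has $\la_{\min}-\sigma^2>\la_{\min}/2$, so $\nor{\Pi^Y-\Pi}<2\sigma^2/\la_{\min}$.

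The only genuinely delicate point is the bookkeeping in the last step: one has to feed the perturbation lemma the correct gap, namely the distance between the $\Sigma_{AX}$-eigenvalue block retained by one projection and the $\Sigma_Y$-eigenvalue block dropped by the other (equal to $\la_{\min}-\sigma^2$ in each pairing, not merely $\la_{\min}/2$), and to use the equal-rank identity $\nor{\Pi-\Pi^Y}=\nor{(I-\Pi)\Pi^Y}$; the slack $\sigma^2<\la_{\min}/4$ then converts $1/(\la_{\min}-\sigma^2)$ into the clean factor $2/\la_{\min}$. Everything else — the cancellation of the cross terms, the estimate $\nor{\Sigma_W}\le1$, the Weyl comparison — is routine, and if the appendix lemma is already stated in the form $\nor{\Pi-\Pi^Y}\le 2\nor{N}/\la_{\min}$ whenever $\nor{N}<\la_{\min}/4$, then \eqref{eq:24} follows in one line once $\Sigma_Y=\Sigma_{AX}+N$ with $\nor{N}\le\sigma^2$ has been recorded.
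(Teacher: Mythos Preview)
Your proposal is correct and follows essentially the same route as the paper: both establish $\Sigma_Y=\Sigma_{AX}+\sigma^2\Sigma_W$ with $\nor{\sigma^2\Sigma_W}\le\sigma^2<\la_{\min}/4$ via the $q=2$ case of~\eqref{eq:A2a}, and then feed this into the spectral perturbation lemma of Appendix~A. The only difference is cosmetic: the paper invokes Proposition~\ref{propro} as a black box (with $\alpha_j=\la_{\min}$, $\alpha_{j+1}=0$, so the gap is $\la_{\min}$ and~\eqref{eq:A1} gives $2\sigma^2/\la_{\min}$ directly), whereas you unpack that proposition into its Weyl and Davis--Kahan ingredients and obtain the marginally sharper intermediate bound $\sigma^2/(\la_{\min}-\sigma^2)$ before relaxing it; your closing remark already anticipates this.
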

\begin{proof}
Since $AX$ and $W$ are independent and $W$ has zero
mean, then
\begin{equation*}
  \label{eq:25}
  \Sigma_Y = \Sigma_{AX} + \sigma^2 \Sigma_W.
\end{equation*}
\red{ Since $\Sigma_W$ is a
positive matrix and $W$ is a sub-gaussian 
vector satisfying~\eqref{eq:22},
with the choice $q=2$ in~\eqref{eq:A2a}, we have
\begin{equation}
  \label{eq:26}
 \nor{\Sigma_W} =\sup_{v\in S^{m-1}}\scal{\Sigma_W v}{v}=\sup_{v\in S^{m-1}}\mathbb E[
   \scal{W}{v}^2]\leq 2 \nor{W}^2_{\psi_2} = 1,
\end{equation}
so that $\nor{\Sigma_Y-\Sigma_{AX}}\leq\sigma^2<\la_{\min}/4$. 

We now apply Proposition~\ref{propro} with $\mathcal
A=\Sigma_{AX}$ and eigenvalues
$(\alpha_j)_j$ and projections\footnote{In the
  statement of  Proposition~\ref{propro} the eigenvalues are counted
  without their multiplicity and ordered in a decreasing way and each $P_j$
  is the projection onto the vector space
spanned by the eigenvectors {with corresponding eigenvalue greater or
  equal than  $\alpha_j$.}} $(P_j)_j$,  and $\mathcal B=\Sigma_Y$
 with eigenvalues $(\beta_\ell)_\ell$ and projections $(Q_\ell)_\ell$. We choose $j$ such that $\alpha_j=\la_{\min}$ so
   that $\alpha_{j+1}=0$, $P_j=\Pi$ and, by~\eqref{eq:61},
\[ \dim{\ran{P_j}}=\dim{\ran{\Pi}}=\dim{\ran{\Sigma_{A X}}}=h.\]
Then there exists $\ell$ such that $\beta_{\ell+1}<\la_{\min}/2<\beta_\ell$, so that
$Q_\ell=\Pi^{Y}$ and it holds that
$\dim{\ran{\Pi^{Y}}}=\dim{\ran{P_j}}=h$. Finally, ~\eqref{eq:A1}
implies~\eqref{eq:24} since $\alpha_{h+1}=0$. 
}
\end{proof}

Recall that $\wh{\Pi}$ is  the projection onto the
vector space spanned by the first $h$-eigenvectors of $\wh{\Sigma}$
defined by~\eqref{eq:60}. 
\begin{theorem}\label{thm:QQn}
Given $\tau>0$ with
probability greater than $1 -2 \mathrm{e}^{-\tau^2}$,
$\wh{\Pi}$ coincides with the projection onto the vector space
spanned by the eigenvectors of $\wh{\Sigma}$ \red{with corresponding eigenvalue greater than  $\la_{\min}/2$}. Furthermore
\begin{equation}\label{eq:27}
\nor{\wh{\Pi}- \Pi} \lesssim \frac{1 }{\la_{\min}}\left(
 \sqrt{\frac{m}{n}}+  \frac{\tau}{\sqrt{n}} +\sigma^2\right),
\end{equation}
provided that
\begin{align}
  n  & \gtrsim (\sqrt{m}+\tau)^2 \max\left \{\frac{64}{\la_{\min}^2},1 \right \}\label{cond}\\
  \sigma^2 & < \frac{\la_{\min}}{8} .\nonumber
\end{align} 
\end{theorem}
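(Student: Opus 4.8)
The plan is to combine a concentration bound on $\wh\Sigma$ around $\Sigma_Y$ with the deterministic perturbation estimate already established in Proposition~\ref{thm:QQ}. First I would invoke a sub-gaussian covariance concentration inequality (of the Rudelson--Vershynin / Vershynin type, which should be one of the tools recalled in Appendix~\ref{sec:sub-gaussian-vectors}): since each $Y_i$ is sub-gaussian with $\nor{Y_i}_{\psi_2}\le 2$ by~\eqref{eq:22} and the $Y_i$ are i.i.d., with probability at least $1-2\mathrm e^{-\tau^2}$ one has
\[
\nor{\wh\Sigma - \Sigma_Y} \lesssim \sqrt{\tfrac{m}{n}} + \tfrac{\tau}{\sqrt n},
\]
possibly with the square of the bracket as well, but the linear term dominates once $n\gtrsim (\sqrt m+\tau)^2$. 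Together with the bound $\nor{\Sigma_Y-\Sigma_{AX}}\le\sigma^2$ from the proof of Proposition~\ref{thm:QQ}, the triangle inequality gives $\nor{\wh\Sigma-\Sigma_{AX}}\lesssim \sqrt{m/n}+\tau/\sqrt n+\sigma^2$ on that event.

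Next I would feed this into the same abstract perturbation lemma (Proposition~\ref{propro}) that was used for $\Pi^Y$, now with $\mathcal A=\Sigma_{AX}$ and $\mathcal B=\wh\Sigma$. The role of the assumption $\sigma^2<\la_{\min}/8$ together with condition~\eqref{cond} is exactly to force $\nor{\wh\Sigma-\Sigma_{AX}}<\la_{\min}/4$: the term $\sigma^2$ is below $\la_{\min}/8$ by hypothesis, and choosing the absolute constant in~\eqref{cond} so that $\sqrt{m/n}+\tau/\sqrt n<\la_{\min}/8$ (which is what the factor $\max\{64/\la_{\min}^2,1\}$ buys, since $(\sqrt m+\tau)^2\cdot 64/\la_{\min}^2\lesssim n$ gives $(\sqrt m+\tau)/\sqrt n\lesssim \la_{\min}/8$) makes the sum strictly less than $\la_{\min}/4$. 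On that event the spectral gap argument of Proposition~\ref{thm:QQ} applies verbatim: $\Sigma_{AX}$ has $h$ eigenvalues $\ge\la_{\min}$ and the rest equal to $0$, so $\wh\Sigma$ has exactly $h$ eigenvalues above $\la_{\min}/2$ and the remaining ones below $\la_{\min}/2$; hence the top-$h$ eigenprojector $\wh\Pi$ equals the projector onto eigenvalues $>\la_{\min}/2$, and the Davis--Kahan-type bound~\eqref{eq:A1} (with $\alpha_{h+1}=0$) yields
\[
\nor{\wh\Pi-\Pi} \le \frac{2}{\la_{\min}}\,\nor{\wh\Sigma-\Sigma_{AX}} \lesssim \frac{1}{\la_{\min}}\left(\sqrt{\tfrac{m}{n}}+\tfrac{\tau}{\sqrt n}+\sigma^2\right).
\]

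The main obstacle is getting the concentration inequality for $\wh\Sigma$ with the right dependence on $m$, $n$, and $\tau$ in the sub-gaussian (not just bounded) setting, and tracking the absolute constants carefully enough that the "$\lesssim$" in~\eqref{cond} is genuinely absolute — independent of $d$, $m$, $h$, and of the particular problem. I expect this to hinge on a result like $\nor{\wh\Sigma-\Sigma_Y}\lesssim \nor{Y}_{\psi_2}^2(\sqrt{m/n}+m/n+(\tau+\sqrt m)/\sqrt n+\dots)$; one then has to check that under~\eqref{cond} the "bad" higher-order terms ($m/n$, $\tau^2/n$) are dominated by the stated linear ones, which again is exactly what the explicit threshold $n\gtrsim(\sqrt m+\tau)^2\max\{64/\la_{\min}^2,1\}$ is designed to ensure. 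Everything else — the triangle inequality, the eigenvalue interlacing/gap bookkeeping, and the invocation of Proposition~\ref{propro} — is routine and parallels the already-given proof of Proposition~\ref{thm:QQ}.
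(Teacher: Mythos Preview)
Your proposal is correct and follows essentially the same route as the paper: invoke the sub-gaussian covariance concentration result (the paper's Theorem~\ref{thm:concen}) for the $Y_i$ with $\nor{Y}_{\psi_2}\le 2$, combine it via the triangle inequality with $\nor{\Sigma_Y-\Sigma_{AX}}\le\sigma^2$, use the conditions~\eqref{cond} and $\sigma^2<\la_{\min}/8$ to force the total perturbation below $\la_{\min}/4$ (and to reduce $\max\{\delta,\delta^2\}$ to $\delta$), and then apply Proposition~\ref{propro} exactly as in the proof of Proposition~\ref{thm:QQ}. Your discussion of the constants and of why the quadratic terms are harmless under~\eqref{cond} matches the paper's reasoning.
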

\begin{proof}
\red{
We apply Theorem~\ref{thm:concen} with $\xi_i=Y_i$ and
\begin{equation}
\delta= C\sqrt{\frac{m}{n}}+  \frac{\tau}{\sqrt{
    n}}\leq\min\set{1,\la_{\min}/8}\leq 1,\label{eq:9}
\end{equation}
by~\eqref{eq:26}. Since $\delta^2\leq\delta$, with probability greater
than $1-2 \mathrm{e}^{-\tau^2},$ 
\begin{align*}
 \nor{\wh{\Sigma}- \Sigma_{AX}}& \leq \nor{\wh{\Sigma}-
                           \Sigma_Y}+\nor{\Sigma_Y-\Sigma_{AX}} \\
& \leq C\left(
      \sqrt{\frac{m}{n}}+
  \frac{\tau}{\sqrt{ n}}\right)+ \sigma^2\\
& \leq \frac{\la_{\min}}{8} + \frac{\la_{\min}}{8} =\frac{\la_{\min}}{4},
\end{align*}
where the last inequality follows by~\eqref{eq:9}. 

As in the proof of Proposition~\ref{thm:QQ}, we apply
Proposition~\ref{propro} with $\mathcal A=\Sigma_{AX}$,
$\mathcal B=\wh{\Sigma}$ and $\alpha_j=\la_{\min}$ to be the smallest
eigenvalue of $\Sigma_{AX}$, so that $P_j=\Pi$ and $\dim{\ran{P_j}}=h$. 
Then, there exists a unique  eigenvalue $\beta_\ell$ of $\wh{\Sigma}$  such that
$\beta_{\ell+1}<\la_{\min}/2<\beta_\ell$ and
$\dim{\ran{Q_{\ell}}}=\dim{\ran{P_j}}=h$.  Then $Q_\ell= \wh{\Pi}$ and \eqref{eq:A1} implies~\eqref{eq:27}.   Note that the constant $C$
depends on $\nor{Y}_{\psi_2}\le 2$ by \eqref{eq:22}, so that it
becomes an absolute constant, when considering the worst case $\nor{Y}_{\psi_2}= 2$.}
\end{proof}
If $n$ and $\sigma$ satisfy~\eqref{cond},  the above proposition shows that the empirical covariance matrix
  $\wh{\Sigma}$ has a spectral gap around the value $\la_{\min}/2$ and
  the number of eigenvector \red{with corresponding eigenvalue greater than
  $\la_{\min}/2$} is precisely $h$, so that $\wh{\Pi}$ is uniquely
  defined. Furthermore, the dimension $h$ can be
  estimated by observing spectral gaps in the singular value decomposition of $\wh{\Sigma}$.

If the number $n$ of samples goes to infinity,
  bound~\eqref{eq:27} does not converge to zero due to term
  proportional to the noise level $\sigma$. However, if the random
  noise $W$ is isotropic, we can improve the estimate.

\begin{theorem}\label{thm:QQnbis}
Assume that $\Sigma_W=\operatorname{Id}$. 
Given $\tau>0$ with
probability greater than $1 -2 \mathrm{e}^{-\tau^2}$,
\begin{equation}\label{eq:27bis}
\nor{\wh{\Pi}- \Pi} \lesssim \frac{1 }{\la_{\min}}\left(
 \sqrt{\frac{m}{n}}+  \frac{\tau}{\sqrt{n}} \right),
\end{equation}
provided that
\begin{align}
  n  & \gtrsim (\sqrt{m}+\tau)^2 \set{1,\frac{16}{\la^2_{\min}}}\label{cond_bis}\\
  \sigma^2 & < \frac{\la_{\min}}{2} .\nonumber
\end{align} 
\end{theorem}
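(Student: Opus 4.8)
The plan is to follow the same template as the proof of Theorem~\ref{thm:QQn}, simply exploiting the extra hypothesis $\Sigma_W=\operatorname{Id}$ to eliminate the bias term that previously contributed $\sigma^2$. Recall from the proof of Proposition~\ref{thm:QQ} that $\Sigma_Y=\Sigma_{AX}+\sigma^2\Sigma_W$; when $\Sigma_W=\operatorname{Id}$, this becomes $\Sigma_Y=\Sigma_{AX}+\sigma^2\operatorname{Id}$. The key observation is that adding a multiple of the identity shifts every eigenvalue of $\Sigma_{AX}$ by exactly $\sigma^2$ without changing the eigenvectors, so $\Sigma_Y$ and $\Sigma_{AX}$ share the same eigenprojections. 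In particular, the projection $\Pi$ onto $\mathcal W=\ran{\Sigma_{AX}}$ coincides with the projection onto the span of the eigenvectors of $\Sigma_Y$ with eigenvalue strictly larger than $\sigma^2$; and since $\sigma^2<\la_{\min}/2$, this is exactly the span of eigenvectors with eigenvalue greater than $\la_{\min}/2$. Hence, unlike in the general case, here $\Pi^Y=\Pi$ exactly (not merely approximately).

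First I would apply Theorem~\ref{thm:concen} with $\xi_i=Y_i$, obtaining, with probability at least $1-2\mathrm{e}^{-\tau^2}$, the bound $\nor{\wh{\Sigma}-\Sigma_Y}\leq C(\sqrt{m/n}+\tau/\sqrt n)=:\delta$, where the constant $C$ again depends only on $\nor{Y}_{\psi_2}\leq 2$, hence is absolute. Condition~\eqref{cond_bis} is chosen precisely so that $\delta\leq\min\{1,\la_{\min}/4\}$, and therefore also $\delta\leq\la_{\min}/4$. Next I would invoke the perturbation result Proposition~\ref{propro} with $\mathcal A=\Sigma_Y$ and $\mathcal B=\wh{\Sigma}$: take $\alpha_j=\la_{\min}+\sigma^2$ to be the smallest eigenvalue of $\Sigma_Y$ lying above $\la_{\min}/2$ (using $\sigma^2<\la_{\min}/2$), so that $P_j$ is the projection onto the $h$-dimensional top eigenspace of $\Sigma_Y$, which by the paragraph above equals $\Pi$. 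Since $\nor{\wh\Sigma-\Sigma_Y}\leq\delta$ is small enough relative to the spectral gap of $\Sigma_Y$ at $\la_{\min}/2$, there is a unique eigenvalue $\beta_\ell$ of $\wh\Sigma$ with $\beta_{\ell+1}<\la_{\min}/2<\beta_\ell$, with $\dim{\ran{Q_\ell}}=h$; thus $Q_\ell=\wh\Pi$, and~\eqref{eq:A1} gives
\[
\nor{\wh\Pi-\Pi}=\nor{Q_\ell-P_j}\lesssim\frac{1}{\la_{\min}}\left(\sqrt{\frac{m}{n}}+\frac{\tau}{\sqrt n}\right),
\]
which is~\eqref{eq:27bis}; the statement about $\wh\Pi$ coinciding with the projection onto eigenvectors of $\wh\Sigma$ with eigenvalue exceeding $\la_{\min}/2$ is part of the same conclusion.

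I do not expect any serious obstacle here: the entire argument is a cleaner rerun of the proof of Theorem~\ref{thm:QQn}, and the only genuinely new ingredient is the elementary remark that $\Sigma_Y=\Sigma_{AX}+\sigma^2\operatorname{Id}$ preserves eigenvectors. The one point requiring a little care is the bookkeeping in applying Proposition~\ref{propro}: one must verify that the spectral gap of $\Sigma_Y$ at the threshold $\la_{\min}/2$ is at least $\la_{\min}/2$ (it is, since the relevant eigenvalues of $\Sigma_Y$ are either $0$ or at least $\la_{\min}+\sigma^2$), and that $\delta$ is smaller than half this gap, which is guaranteed by~\eqref{cond_bis}. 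Tracking the absolute constants through Theorem~\ref{thm:concen} using $\nor{Y}_{\psi_2}\leq 2$ is routine and identical to the previous proof.
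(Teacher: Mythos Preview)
Your proposal is correct and follows essentially the same approach as the paper's proof: observe that $\Sigma_Y=\Sigma_{AX}+\sigma^2\operatorname{Id}$ shares eigenvectors with $\Sigma_{AX}$, identify $\Pi$ with the top eigenprojection of $\Sigma_Y$, apply Theorem~\ref{thm:concen} and then Proposition~\ref{propro} with $\mathcal A=\Sigma_Y$, $\mathcal B=\wh{\Sigma}$. One small slip to correct: the eigenvalues of $\Sigma_Y$ below the gap are $\sigma^2$, not $0$, so the spectral gap is $\alpha_j-\alpha_{j+1}=\la_{\min}$ and the natural separating threshold from Proposition~\ref{propro} is $\la_{\min}/2+\sigma^2$ rather than $\la_{\min}/2$; this is harmless since the gap is still $\la_{\min}$ and $\dim\ran{Q_\ell}=h$ forces $Q_\ell=\wh\Pi$ regardless.
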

\begin{proof}
As in the proof of Proposition~\ref{thm:QQ}, we have that
\[
 \Sigma_Y = \Sigma_{AX} + \sigma^2 \Sigma_W= \Sigma_{AX} + \sigma^2
 \operatorname{Id},
\]
where the last equality follows from the assumption that the noise is
isotropic. Hence, the matrices $\Sigma_Y$ and $\Sigma_{AX}$ have the same eigenvectors,
whereas the corresponding eigenvalues are shifted by
$\sigma^2$. Taking into account that $\la_{\min}$ is the smallest
non-zero eigenvalue of $\Sigma_{AX}$ and denoted  by
$(\alpha_j)_{j=1}^N$ the eigenvalues of $\Sigma_Y$, it follows that
there exists $j=1,\ldots,N$ such that
\[ \alpha_1> \alpha_2 >\alpha_j=\la_{\min}+\sigma^2\qquad
  \alpha_{j+1}=\ldots=\alpha_N=\sigma^2.\]
Furthermore, denoted by $P_j$ the projection onto the vector space
spanned by the eigenvectors with corresponding eigenvalue greater or
  equal than  $\alpha_j$, 
it holds that  $\Pi=P_j$. By assumption $\sigma^2 <
\la_{\min}/2$, so that $\Pi^Y=P_j=\Pi$ and, hence, $\dim{\ran{P_j}}=h$.

As in the proof of Theorem~\ref{thm:QQn}, with probability 
$1-2 \mathrm{e}^{-\tau^2},$ 
\begin{align*}
 \nor{\wh{\Sigma}- \Sigma_Y} & \leq C\left(
      \sqrt{\frac{m}{n}}+  \frac{\tau}{\sqrt{ n}}\right) <
                               \min\set{1,\frac{\la_{\min}}{4}}
<\frac{\alpha_h-\alpha_{h+1}}{4} ,
\end{align*}
where $n$ is large enough, see~\eqref{cond_bis}. Then, there exists a unique eigenvalue $\beta_\ell$ of $\wh{\Sigma}$  such that
$\beta_{\ell+1}< \frac{\la_{\min}}{2}+ \sigma^2<\beta_\ell$ and
$\dim{\ran{Q_{\ell}}}=\dim{\ran{P_j}}=h$.  Then $Q_\ell= \wh{\Pi}$ and \eqref{eq:A1} implies~\eqref{eq:27bis}. 
\end{proof}

We need the following technical lemma.
\begin{lemma}
Given $\tau>0,$ with probability greater than $1 -4
\mathrm{e}^{-\tau^2}$, simultaneously it holds
\begin{equation}
  \label{eq:55}
 \nor{X} \lesssim ( \sqrt{h} + \tau ) \qquad \nor{Y} \lesssim  (
 \sqrt{h} +  \sigma   \sqrt{m} + \tau ) \qquad \nor{\Pi W} \lesssim ( \sqrt{h} + \tau ).
\end{equation}
\end{lemma}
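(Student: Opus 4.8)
The plan is to obtain each of the three estimates from one and the same concentration bound for the Euclidean norm of a sub-gaussian vector supported on a low-dimensional subspace, and then to conclude by a union bound. The tool I would use is the standard fact (reviewed in Appendix~\ref{sec:sub-gaussian-vectors}): if $\xi\in\R^N$ is a sub-gaussian vector with $\mathbb P[\xi\in\kk]=1$ for some subspace $\kk$ with $\dim{\kk}=k$, then $\mathbb P[\nor{\xi}\gtrsim\nor{\xi}_{\psi_2}(\sqrt{k}+\tau)]\le\mathrm e^{-\tau^2}$. If one prefers a self-contained derivation, write $\nor{\xi}=\sup_{v\in\kk\cap S^{N-1}}\scal{\xi}{v}$, cover the $(k-1)$-dimensional sphere $\kk\cap S^{N-1}$ by a $\tfrac12$-net of cardinality at most $5^{k}$, so that $\nor{\xi}\le 2\max_{v\in\mathcal N}\scal{\xi}{v}$, union bound the one-dimensional sub-gaussian tails coming from~\eqref{eq:A2a}, and absorb the factor $5^{k}$ into $\mathrm e^{-\tau^2}$ by choosing the constant large. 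This is precisely what allows the ambient dimension to be replaced by the intrinsic dimension $h$ for every vector that lives on $\mathcal V$ or $\mathcal W$.

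First I would apply this to $X$: by Assumption~\ref{ass}, $\nor{X}_{\psi_2}=1$, and by~\eqref{eq:21} the vector $X$ lies in $\mathcal V$ with probability $1$, with $\dim{\mathcal V}=h$; hence $\nor{X}\lesssim\sqrt{h}+\tau$ off an event of probability at most $\mathrm e^{-\tau^2}$. Next I would apply it to $\Pi W$: since $\Pi$ is a projection, $\nor{\Pi W}_{\psi_2}\le\nor{\Pi}\,\nor{W}_{\psi_2}=1/\sqrt2$, and $\Pi W$ always takes values in $\mathcal W=A\mathcal V$, which has dimension $h$ by~\eqref{eq:61}; this gives $\nor{\Pi W}\lesssim\sqrt{h}+\tau$ off an event of probability at most $\mathrm e^{-\tau^2}$. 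Finally, for $Y=AX+\sigma W$ I would split by the triangle inequality: the vector $AX$ lies in $\mathcal W$ with $\nor{AX}_{\psi_2}\le 1$ by~\eqref{eq:22}, so $\nor{AX}\lesssim\sqrt{h}+\tau$ (alternatively one simply uses $\nor{AX}\le\nor{A}\nor{X}=\nor{X}$ on the event already controlled), whereas $W\in\R^m$ with $\nor{W}_{\psi_2}=1/\sqrt2$ is not confined to any proper subspace, so here the ambient dimension genuinely enters and $\nor{W}\lesssim\sqrt{m}+\tau$ off an event of probability at most $\mathrm e^{-\tau^2}$. Combining, and using $0<\sigma<\sqrt2$ to fold $\sigma\tau$ into $\tau$,
\[
\nor{Y}\le\nor{AX}+\sigma\nor{W}\lesssim(\sqrt{h}+\tau)+\sigma(\sqrt{m}+\tau)\lesssim\sqrt{h}+\sigma\sqrt{m}+\tau .
\]

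A union bound over the (at most four) exceptional events — those for $\nor{X}$, $\nor{AX}$, $\nor{\Pi W}$ and $\nor{W}$ — then shows that the three displayed inequalities hold simultaneously with probability at least $1-4\mathrm e^{-\tau^2}$, the count of four being just convenient slack. I do not anticipate a genuine obstacle here: the whole argument is a bookkeeping exercise in (i) tracking the sub-gaussian norms so that every hidden constant is absolute — independent of $d$, $m$ and $h$ — which is guaranteed by the normalisations in Assumption~\ref{ass} and by~\eqref{eq:22}, and (ii) making sure the correct dimension enters each tail, namely $h$ for the vectors that are almost surely confined to $\mathcal V$ or $\mathcal W$, and $m$ only for the unrestricted noise $W$ appearing in the bound for $\nor{Y}$.
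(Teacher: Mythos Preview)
Your proposal is correct and follows essentially the same route as the paper: apply the sub-gaussian norm bound from Appendix~\ref{sec:sub-gaussian-vectors} to $X$ in $\mathcal V$, to $W$ in $\R^m$, and to $\Pi W$ in $\mathcal W$, combine via the triangle inequality $\nor{Y}\le\nor{X}+\sigma\nor{W}$, and take a union bound. The only cosmetic difference is that the paper invokes~\eqref{eq:56} (with the extra factor~$2$) for the not-necessarily-centered $X$ and~\eqref{eq:57} for the centered $W$ and $\Pi W$, which is how the total of $4\mathrm e^{-\tau^2}$ is reached exactly rather than as slack.
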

\begin{proof}
Since $X$ is a sub-gaussian random vector taking values in $\mathcal
V$ with $h=\dim{\mathcal V}$, taking into account that
$\nor{X}_{\psi_2} =1$, bound~\eqref{eq:56} gives
\[ \nor{X} \leq 9 (\sqrt{h} + \tau),\]
 with probability greater than $1-2 {\mathrm e}^{-\tau^2}$. Since $W$ is a
   centered sub-gaussian random vector taking values in $\R^m$ and
$\nor{W}_{\psi_2}\leq 1$, by~\eqref{eq:57} 
\[ \nor{W} \leq 16 (\sqrt{m} + \tau),\]
with probability greater than $1- {\mathrm e}^{-\tau^2}$.  Since $\nor{A}=1$ and
\[\nor{Y}\leq \nor{AX}+\sigma \nor{W} \leq \nor{X}+\sigma \nor{W}, \]
the first two bounds in~\eqref{eq:55} hold true with
probability greater than $1-3 {\mathrm e}^{-\tau^2}$. Since $\Pi W$ is a
   centered sub-gaussian random vector taking values in $\mathcal W$
   with $h=\dim{\mathcal W},$ and $\nor{\Pi W}_{\psi_2}\leq 1$,  by~\eqref{eq:57} 
\[ \nor{\Pi W} \leq 16 (\sqrt{h} + \tau),\]
with probability greater than $1- {\mathrm e}^{-\tau^2}$. 
\end{proof}
As a consequence, we have the following bound. 
\begin{proposition}
\label{prop:boundB}
 Given $\tau>0$, if  $n$ and $\sigma$ satisfy~\eqref{cond},
then  with probability greater than $1 -6
\mathrm{e}^{-\tau^2}$
\begin{equation}
  \label{eq:58}
  \nor{ (\Pi - \wh{\Pi}) Y - \Pi\eta}\lesssim  B(n,\tau,\sigma),
\end{equation}
where
\begin{alignat}{1}
  \label{eq:59}
  B(n,\tau,\sigma) & = \frac{1}{\la_{\min}}
\sqrt{\frac{hm}{n}}+\sigma
\left(
\sqrt{h}+ \frac{1}{\la_{\min}}\frac{m}{\sqrt{n}}\right)+ \frac{\sigma^2}{ \la_{\min}} \sqrt{h} +
\frac{\sigma^3}{ \la_{\min}} \sqrt{m} +\\
& \quad + \tau \left( \frac{1}{\la_{\min}} \sqrt{\frac{m}{n}}
  + \sigma  \left(1+\frac{1}{\la_{\min}}
    \sqrt{\frac{m}{n}}\right) +\frac{\sigma^2}{ \la_{\min}}
\right) +\tau^2 \frac{1}{\la_{\min}} \frac{1}{\sqrt{n}} .\nonumber
\end{alignat}
\end{proposition}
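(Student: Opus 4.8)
The plan is to reduce the vector $(\Pi - \wh{\Pi})Y - \Pi\eta$ to a sum of three scalar quantities that are already controlled: the operator distance $\nor{\wh{\Pi} - \Pi}$ (Theorem~\ref{thm:QQn}) and the norms $\nor{X}$, $\nor{\Pi W}$, $\nor{W}$ (the preceding Lemma). First I would use the decomposition $Y = AX + \eta$ together with the almost-sure identity $\Pi AX = AX$ from~\eqref{eq:15}: since $\eta$ contributes $\Pi\eta$ to $\Pi Y$ and cancels, one gets
\[
(\Pi - \wh{\Pi})Y - \Pi\eta = (\Pi - \wh{\Pi})(AX + \eta) - \Pi\eta = (\Pi - \wh{\Pi})AX - \wh{\Pi}\,\eta .
\]
Then, using $\nor{A}=1$, $\eta = \sigma W$, submultiplicativity of the spectral norm, and the further split $\wh{\Pi}W = \Pi W + (\wh{\Pi} - \Pi)W$, the triangle inequality gives
\[
\nor{(\Pi - \wh{\Pi})Y - \Pi\eta} \le \nor{\wh{\Pi} - \Pi}\,\nor{X} + \sigma\,\nor{\Pi W} + \sigma\,\nor{\wh{\Pi} - \Pi}\,\nor{W} .
\]
This extra split is essential, since a cruder bound $\sigma\nor{\wh{\Pi}W}\le\sigma\nor{W}$ would produce a term $\sigma\sqrt{m}$ that does not appear in~\eqref{eq:59}, whereas $\sigma\nor{\Pi W}$ only produces $\sigma\sqrt{h}$.

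Next I would intersect the relevant high-probability events. Condition~\eqref{cond} is exactly the hypothesis of Theorem~\ref{thm:QQn}, so on an event of probability at least $1 - 2\mathrm{e}^{-\tau^2}$ the projection $\wh{\Pi}$ is well defined and $\nor{\wh{\Pi} - \Pi} \lesssim \la_{\min}^{-1}\bigl(\sqrt{m/n} + \tau/\sqrt{n} + \sigma^2\bigr)$; on an event of probability at least $1 - 4\mathrm{e}^{-\tau^2}$ the Lemma gives $\nor{X} \lesssim \sqrt{h} + \tau$, $\nor{\Pi W} \lesssim \sqrt{h} + \tau$, and $\nor{W} \lesssim \sqrt{m} + \tau$ (the last bound is established inside the proof of the Lemma). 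A union bound yields the claimed probability $1 - 6\mathrm{e}^{-\tau^2}$.

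Finally I would substitute these estimates into the displayed inequality and expand the three products
\[
\tfrac{1}{\la_{\min}}\bigl(\sqrt{m/n} + \tfrac{\tau}{\sqrt{n}} + \sigma^2\bigr)(\sqrt{h} + \tau), \qquad \sigma(\sqrt{h} + \tau), \qquad \tfrac{\sigma}{\la_{\min}}\bigl(\sqrt{m/n} + \tfrac{\tau}{\sqrt{n}} + \sigma^2\bigr)(\sqrt{m} + \tau),
\]
using $h \le m$ (so $\sqrt{h} \le \sqrt{m}$) and $\sigma < \sqrt{2}$ to absorb the subdominant monomials — for instance $\tau\sqrt{h}/(\la_{\min}\sqrt{n}) \le \tau\sqrt{m/n}/\la_{\min}$, $\sigma^3\tau/\la_{\min} \lesssim \sigma^2\tau/\la_{\min}$, and $\sigma\tau^2/(\la_{\min}\sqrt{n}) \lesssim \tau^2/(\la_{\min}\sqrt{n})$. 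Grouping the surviving terms by powers of $\tau$ then reproduces precisely the expression $B(n,\tau,\sigma)$ of~\eqref{eq:59}. The only laborious step is this last bookkeeping; there is no conceptual obstacle once the cancellation $\Pi AX = AX$ and the splitting of $\wh{\Pi}W$ are in place.
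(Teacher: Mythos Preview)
Your argument is correct and essentially the same as the paper's. The only cosmetic difference is that the paper applies the triangle inequality directly, bounding $\nor{(\Pi - \wh{\Pi})Y - \Pi\eta}\le \nor{\Pi-\wh{\Pi}}\,\nor{Y} + \sigma\nor{\Pi W}$ and then invoking the Lemma's bound $\nor{Y}\lesssim \sqrt{h}+\sigma\sqrt{m}+\tau$, whereas you first rewrite the vector as $(\Pi-\wh{\Pi})AX - \wh{\Pi}\eta$ and then split $\wh{\Pi}W$; since $\nor{Y}\le\nor{AX}+\sigma\nor{W}$ the two routes produce the same terms (your extra $\sigma\tau$ is absorbed using $\sigma<\sqrt{2}$), and the probability count and final expansion into $B(n,\tau,\sigma)$ are identical.
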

\begin{proof}
Clearly,
\[
\nor{(\Pi - \wh{\Pi}) Y - \Pi\eta}\leq \nor{\Pi-\wh{\Pi}} \nor{Y} + \sigma \nor{\Pi W}.
\] 
If \eqref{cond} holds true,  bounds~\eqref{eq:27} and~\eqref{eq:55}  imply 
\[ \nor{(\Pi - \wh{\Pi}) Y - \Pi\eta}\lesssim\frac{1 }{\la_{\min}}\left(\sqrt{\frac{m}{n}}+
      \frac{\tau}{\sqrt{n}} +\sigma^2\right) ( \sqrt{h} + \sigma
    \sqrt{m} + \tau ) + \sigma ( \sqrt{ h} + \tau ), \]
with probability greater than $1 -6 \mathrm{e}^{-\tau^2}$.  By
developing the brackets and taking into account that $\sqrt{h+m}\leq
\sqrt{2m}$,~\eqref{eq:58} holds true. 
\end{proof}
\begin{remark}
  Usually in machine
learning bounds of the type~\eqref{eq:58} are considered in terms of their expectation, e.g.,   with respect to $(X,Y)$. In our framework, this would amount 
to the following bound
\begin{alignat*}{1}
  & \mathbb E\left[ \nor{(\Pi - \wh{\Pi}) Y - \Pi\eta}\,\Big|\, Y_1,\ldots,Y_n\right] \lesssim\\
  & \lesssim \quad\frac{1 }{\la_{\min}}\left(\sqrt{\frac{m}{n}}+
      \frac{\tau}{\sqrt{n}} +\sigma^2\right) (\sqrt{h}+\sigma
    \sqrt{m}) + \sigma \sqrt{h},
\end{alignat*}
obtained by observing that $\mathbb E[ \nor{Y}] \leq \mathbb E[ \nor{A} \nor{X}] + \sigma \mathbb
E[ \nor{W}]$, 
\[ \mathbb E[ \nor{X}]^2 \leq  \mathbb E[
\nor{X}^2]=\tr{\Sigma_X}\leq 2 h \nor{X}_{\psi_2}^2\lesssim h,\] 
and, by a similar computation, 
\begin{alignat*}{1}
\mathbb E[ \nor{W}]& \lesssim \sqrt{m}\qquad    \mathbb E[ \nor{\Pi W}]\lesssim \sqrt{h}.
\end{alignat*}
Our bound~\eqref{eq:58} is  much stronger and it holds in probability with respect to both the training
set $Y_1,\ldots Y_n$ and the new pair $(X,Y)$. 
\end{remark}
Our first result is a direct consequence of the  estimate~\eqref{eq:58}.
\begin{theorem}\label{thm:1}
Given $\tau>0$, with probability greater than $1 - 6 \mathrm{e}^{-\tau^2}$,
 \begin{alignat*}{1}
\nor{\widehat{X}-X} & \lesssim  \frac{1}{\sigma_d} B(n,\tau,\sigma) \\
\nor{Q \widehat{\eta}-Q \eta} & \lesssim  B(n,\tau,\sigma) \\
   \nor{Z^{\wh{t}}-X } - \nor{Z^{t^*}-X} & \lesssim  \frac{1}{\sigma_d}B(n, \tau,\sigma) \\
 \sup_{0\leq t\leq 1} \abs{\nor{\wh{R}(t)}-\nor{R(t)}}  & \lesssim  \frac{1}{\sigma_d} B(n, \tau,\sigma)
 \end{alignat*}
provided that $n$ and $\sigma$ satisfy~\eqref{cond}.
\end{theorem}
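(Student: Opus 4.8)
The plan is to derive all four inequalities \emph{deterministically}, on the single event of probability greater than $1-6\mathrm e^{-\tau^2}$ supplied by Proposition~\ref{prop:boundB}, on which $\Delta:=(\Pi-\wh\Pi)Y-\Pi\eta$ satisfies $\nor{\Delta}\lesssim B(n,\tau,\sigma)$. Since each of the four conclusions will turn out to be an exact consequence of this one estimate together with the algebraic identities of the previous section and a handful of elementary spectral-norm bounds, the probability $1-6\mathrm e^{-\tau^2}$ is inherited unchanged. Throughout I would work on that event, recalling from \eqref{eq:6} that $\widehat X=A^\dagger\wh\Pi Y$ and $\widehat\eta=Y-\wh\Pi Y$.

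First I would bound the two estimator errors. Since $A$ is injective, $A^\dagger A=I$, so $X=A^\dagger AX$; combining this with $\Pi AX=AX$ (which holds almost surely by \eqref{eq:15}) and $\eta=Y-AX$ gives $\wh\Pi Y-AX=(\wh\Pi-\Pi)Y+\Pi\eta=-\Delta$, whence $\widehat X-X=A^\dagger(\wh\Pi Y-AX)=-A^\dagger\Delta$ and $\nor{\widehat X-X}\le\nor{A^\dagger}\nor{\Delta}=\sigma_d^{-1}\nor{\Delta}$. For the noise estimator, $Q\eta=QY-QAX=QY-AX$ (because $QA=A$) and $Q\widehat\eta=QY-Q\wh\Pi Y$, while $Q$ restricts to the identity on $\mathcal W\subset\ran{A}$, so $AX=\Pi Y-\Pi\eta=Q(\Pi Y-\Pi\eta)$; hence $Q\widehat\eta-Q\eta=AX-Q\wh\Pi Y=Q\Delta$ and $\nor{Q\widehat\eta-Q\eta}\le\nor{\Delta}$, this time \emph{without} a factor $\sigma_d^{-1}$.

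Next, for the uniform discrepancy between $\nor{\wh R(t)}$ and $\nor{R(t)}$, I would start from $R(t)=B(t)^{-1}\bigl(-(1-t)X+tA^T\eta\bigr)$ (see \eqref{eq:2}) and its empirical analogue $\wh R(t)=B(t)^{-1}\bigl(-(1-t)\widehat X+tA^T\widehat\eta\bigr)$ from the previous section; subtracting and using $A^T=A^TQ$ yields $\wh R(t)-R(t)=-(1-t)B(t)^{-1}(\widehat X-X)+tB(t)^{-1}A^T(Q\widehat\eta-Q\eta)$. By the reverse triangle inequality the claim reduces to bounding $\nor{\wh R(t)-R(t)}$ uniformly in $t\in[0,1]$, for which I would use the two elementary facts $\nor{(1-t)B(t)^{-1}}\le1$ (immediate from $t\sigma_i^2+1-t\ge1-t$) and $\nor{tB(t)^{-1}A^T}=\max_i\frac{t\sigma_i}{t\sigma_i^2+1-t}\le\sigma_d^{-1}$, the latter being equivalent to $t\sigma_i(\sigma_i-\sigma_d)+(1-t)\ge0$, which holds because every $\sigma_i\ge\sigma_d$. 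Combining with the previous paragraph, $\sup_{t\in[0,1]}\abs{\nor{\wh R(t)}-\nor{R(t)}}\le\sup_t\nor{\wh R(t)-R(t)}\le\nor{\widehat X-X}+\sigma_d^{-1}\nor{Q\widehat\eta-Q\eta}\lesssim\sigma_d^{-1}B(n,\tau,\sigma)$.

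Finally, the bound on $\nor{Z^{\wh t}-X}-\nor{Z^{t^*}-X}=\nor{R(\wh t)}-\nor{R(t^*)}$ follows from the standard ``two near-minimizers'' argument: with $\varepsilon:=\sup_t\abs{\nor{R(t)}-\nor{\wh R(t)}}$, and using that $\wh t$ minimizes $t\mapsto\nor{\wh R(t)}$ whereas $t^*$ minimizes $t\mapsto\nor{R(t)}$, one has $\nor{R(\wh t)}\le\nor{\wh R(\wh t)}+\varepsilon\le\nor{\wh R(t^*)}+\varepsilon\le\nor{R(t^*)}+2\varepsilon$, so the difference is at most $2\varepsilon\lesssim\sigma_d^{-1}B(n,\tau,\sigma)$ by the previous paragraph. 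The one step that needs more than routine bookkeeping is verifying that the ill-conditioning of $A$ enters through a \emph{single} factor $\sigma_d^{-1}$ and not $\sigma_d^{-2}$, i.e.\ the uniform-in-$t$ estimate $\nor{tB(t)^{-1}A^T}\le\sigma_d^{-1}$; everything else reduces to the projection identities $A^\dagger A=I$, $QA=A$, $A^T=A^TQ$, $Q\Pi=\Pi$ together with \eqref{eq:6} and \eqref{eq:2}.
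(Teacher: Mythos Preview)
Your argument is correct, but you are working harder than necessary for the last two inequalities. The key simplification you missed is that
\[
\wh R(t)-R(t)=(Z^t-\widehat X)-(Z^t-X)=X-\widehat X
\]
is \emph{constant} in $t$ (this is the paper's equation~\eqref{eq:10}). Consequently $\sup_{t}\abs{\nor{\wh R(t)}-\nor{R(t)}}\le\nor{X-\widehat X}$ follows immediately from the reverse triangle inequality and the first bound, with no need to analyze $\nor{(1-t)B(t)^{-1}}$ or $\nor{tB(t)^{-1}A^T}$ separately. Your operator-norm estimate $\nor{tB(t)^{-1}A^T}\le\sigma_d^{-1}$ is correct and gives the same final result, but the single-$\sigma_d^{-1}$ issue you flagged as ``the one step that needs more than routine bookkeeping'' simply does not arise in the paper's route. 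Likewise, for the third inequality the paper writes $\nor{Z^{\wh t}-X}\le\nor{Z^{\wh t}-\widehat X}+\nor{X-\widehat X}\le\nor{Z^{t^*}-\widehat X}+\nor{X-\widehat X}\le\nor{Z^{t^*}-X}+2\nor{X-\widehat X}$, which is your two-near-minimizers argument phrased directly in terms of $\nor{X-\widehat X}$ rather than via the uniform sup. Your first two bounds match the paper's derivation exactly.
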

\begin{proof}
By the first identity of \eqref{eq:15}
  \begin{alignat}{1}
    \label{eq:47}
    X-\widehat{X} & = A^{\dagger}\Pi  A X -  A^{\dagger} \wh{\Pi} (AX+\eta) \\
    & = A^{\dagger} (\Pi - \wh{\Pi}) AX + A^{\dagger} (\Pi - \wh{\Pi})
    \eta -  A^{\dagger}\Pi\eta \nonumber\\
    & = A^{\dagger}\left( (\Pi - \wh{\Pi}) Y - \Pi\eta \right),\nonumber
  \end{alignat}
so that
\[
\nor{X-\widehat{X}}\leq \frac{1}{\sigma_d} \nor{(\Pi - \wh{\Pi}) Y - \Pi\eta}.
\]
An application of \eqref{eq:58} to the previous estimate gives the first bound of the statement. Similarly, we derive the second bound as follows.
Equations~\eqref{eq:8}, \eqref{eq:projdecomp}, and~\eqref{eq:47} yield 
  \begin{alignat*}{1}
    Q \eta -Q \widehat{\eta} & = A( X-\widehat{X} ) \\ 
    & = Q\left((\Pi - \wh{\Pi}) Y - \Pi \eta\right) .
  \end{alignat*}
  The other bounds follow by estimating them by multiples of $\nor{X-\widehat{X}}$ as we show below.
By definition of $\wh{t}$
\begin{alignat*}{1}
   \nor{Z^{\wh{t}}-X }& \leq   
     \nor{Z^{\wh{t}}-\widehat{X}} + \nor{X-\widehat{X}} \\
     &\leq \nor{Z^{t^*}-\widehat{X}}+ \nor{X-\widehat{X}}, \\
     & \leq \nor{Z^{t^*}-X}+ 2 \nor{X-\widehat{X}}.
\end{alignat*} 
Furthermore, 
  \begin{alignat}{1}
    \wh{R}(t)- R(t) & = X-\widehat{X} = A^{\dagger} \left((\Pi-\wh{\Pi})
      Y- \Pi\eta\right) \label{eq:10} ,
  \end{alignat}
and triangle inequality gives
\begin{equation*}
\sup_{0\leq t\leq 1} \abs{\nor{\wh{R}(t)}-\nor{R(t)}}\leq\sup_{0\leq
  t\leq 1} \nor{\wh{R}(t)- R(t) }= \nor{X-\widehat{X}}. \label{eq:48}
\end{equation*}
All the remaining bounds in the statement of the theorem are now consequences of~\eqref{eq:58}.
\end{proof}
\begin{remark}\label{rem:1} To justify and explain the consistency of the sampling strategy for approximation of the optimal regularization parameter $t^*$, let us assume
that $n$   goes to infinity and $\sigma$ vanishes. 
Under this theoretical assumption, Theorem~\ref{thm:1} shows that $\nor{\wh{R}(t)}$
  convergences uniformly to $\nor{R (t)}$ with high probability. The
  uniform convergence implies the $\Gamma$-convergence \citep[see][]{braides}, and, since 
  the domain $[0,1]$ is compact, Theorem~$1.22$ in \citet{braides} ensures that
 \[
\lim_{\underset{\sigma \to 0}{n\to+\infty}} \left(\inf_{0\leq t\leq 1} \nor{\wh{R}(t)} - \inf_{0\leq t\leq 1} \nor{R(t)} \right)=0.
\]
While the compactness given by the $\Gamma$-convergence guarantees the consistency of the approximation to an optimal parameter, it is much harder for arbitrary $A$ to provide an error bound, depending on $n$. For the case $A=I$ in Section $\ref{sec:AisI}$ we are able to establish very precise quantitative bounds  with high probability.
\end{remark}
\begin{remark}
Under the conditions of Theorem~$\ref{thm:1}$ for all $i=1,\ldots,d$ it
holds as well
\begin{alignat*}{1}
  \abs{\xi_i-\widehat{\xi_i}}&\lesssim \frac{1}{\sigma_i} B(n,\tau,\sigma)\\
  \abs{\nu_i-\widehat{\nu_i}}&\lesssim B(n,\tau,\sigma). 
\end{alignat*}
These bounds are a direct consequence of Theorem~$\ref{thm:1}$.
\end{remark}
The following theorem is about the uniform  approximation to the derivative function $H(t)$.
\begin{theorem}\label{thm:unifapprox}
Given $\tau>0$, with probability greater than $1 -10 \mathrm{e}^{-\tau^2}$,
 \begin{alignat*}{1}
  \sup_{0\leq t\leq 1}   \abs{\wh{H}(t) - H(t)}\lesssim B(n,\tau,\sigma)
\left( \frac{1}{\sigma_p^3} ( \sqrt{h}+ \tau)    + \frac{\sigma}{\sigma_d^4} ( \sqrt{d}+ \tau) \right)
 \end{alignat*}
provided that $n$ and $\sigma$ satisfy~\eqref{cond}, where $p$ is defined in \eqref{eq:pdef}.
\end{theorem}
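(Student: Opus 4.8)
The plan is to reduce everything to the single fact that the empirical and the true reconstruction errors have the \emph{same} derivative. Indeed, by~\eqref{eq:5} we have $\wh{R}'(t)=R'(t)=B(t)^{-2}A^TY$ for all $t\in[0,1]$, so
\[
\wh{H}(t)-H(t)=\scal{\wh{R}(t)}{\wh{R}'(t)}-\scal{R(t)}{R'(t)}=\scal{\wh{R}(t)-R(t)}{R'(t)}.
\]
By~\eqref{eq:10} the vector $\wh{R}(t)-R(t)=X-\widehat{X}$ does not depend on $t$ and equals $A^{\dagger}w$, where $w=(\Pi-\wh{\Pi})Y-\Pi\eta$ is exactly the quantity controlled by~\eqref{eq:58}. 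Hence the whole difference is one inner product, $\wh{H}(t)-H(t)=\scal{A^{\dagger}w}{B(t)^{-2}A^TY}$, and there is no derivative term left to estimate.

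Next I would expand this inner product in the singular system of $A$. Since $A^{\dagger}w=\sum_{i=1}^d\sigma_i^{-1}\scal{w}{u_i}v_i$ and $B(t)^{-2}A^TY=\sum_{i=1}^d\sigma_i(t\sigma_i^2+1-t)^{-2}\scal{Y}{u_i}v_i$, the factors $\sigma_i$ partly cancel and
\[
\wh{H}(t)-H(t)=\sum_{i=1}^d\frac{\scal{w}{u_i}\scal{Y}{u_i}}{(t\sigma_i^2+1-t)^2},\qquad \scal{Y}{u_i}=\sigma_i\xi_i+\nu_i,\quad \nu_i=\sigma\scal{W}{u_i}.
\]
I would then split the sum into a signal part ($\sigma_i\xi_i$ in the numerator) and a noise part ($\nu_i$ in the numerator). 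For the signal part only the indices $i\le p$ contribute, since $\xi_i=0$ for $i>p$ by~\eqref{eq:15}; for such $i$ one has $t\sigma_i^2+1-t\ge\sigma_i^2\ge\sigma_p^2$ (it is a convex combination of $\sigma_i^2\le 1$ and $1$), so $\sigma_i(t\sigma_i^2+1-t)^{-2}\le\sigma_i^{-3}\le\sigma_p^{-3}$, and Cauchy--Schwarz against $(\scal{w}{u_i})_{i\le p}$ and $(\xi_i)_{i\le p}$ gives a bound $\sigma_p^{-3}\nor{w}\nor{X}$, using $\sum_{i\le p}\xi_i^2=\nor{X}^2$ and $\sum_{i\le p}\scal{w}{u_i}^2\le\nor{w}^2$. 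For the noise part $t\sigma_i^2+1-t\ge\sigma_d^2$ for every $i$, so each coefficient is at most $\sigma_d^{-4}$, and Cauchy--Schwarz yields $\sigma\,\sigma_d^{-4}\nor{w}\nor{QW}$, where $\nor{QW}^2=\sum_{i=1}^d\scal{W}{u_i}^2$.

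Finally I would insert the probabilistic estimates, all uniform in $t$. Proposition~\ref{prop:boundB} gives $\nor{w}\lesssim B(n,\tau,\sigma)$ and, on the same event, $\nor{X}\lesssim\sqrt h+\tau$; and since $QW$ is a centered sub-gaussian vector taking values in the $d$-dimensional subspace $\ran{Q}$ with $\nor{QW}_{\psi_2}\le\nor{W}_{\psi_2}\le 1$, bound~\eqref{eq:57} gives $\nor{QW}\lesssim\sqrt d+\tau$. A union bound over the event of Proposition~\ref{prop:boundB} and fresh applications of~\eqref{eq:56}--\eqref{eq:57} to $X$, $W$ and $QW$ produces the probability $1-10\mathrm{e}^{-\tau^2}$, and combining the two pieces yields
\[
\sup_{0\le t\le 1}\abs{\wh{H}(t)-H(t)}\lesssim B(n,\tau,\sigma)\left(\frac{1}{\sigma_p^3}(\sqrt h+\tau)+\frac{\sigma}{\sigma_d^4}(\sqrt d+\tau)\right).
\]
The argument is essentially bookkeeping once the identity $\wh{H}-H=\scal{X-\widehat{X}}{R'(t)}$ is observed; the only two points needing genuine care — hence the mild ``main obstacle'' — are keeping $\sigma_p$ rather than $\sigma_d$ in the signal term (which relies on restricting the sum to $i\le p$ together with $1-t+t\sigma_i^2\ge\sigma_i^2$), and recognising that $QW$ lies in a $d$-dimensional range, so that $\sqrt d$, and not $\sqrt m$, is what appears.
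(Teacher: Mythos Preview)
Your proposal is correct and follows essentially the same route as the paper's proof: both start from the identity $\wh{H}(t)-H(t)=\scal{\wh{R}(t)-R(t)}{R'(t)}$ with $\wh{R}(t)-R(t)=A^\dagger\big((\Pi-\wh{\Pi})Y-\Pi\eta\big)$, split $QY=AX+\sigma QW$, exploit $\xi_i=0$ for $i>p$ to obtain $\sigma_p^{-3}$ in the signal term, and then invoke~\eqref{eq:58} together with the sub-gaussian norm bounds on $X$ and $QW$. The only cosmetic difference is that the paper phrases the two pieces as operator-norm estimates $\nor{(tAA^T+(1-t)I)^{-2}AX}$ and $\nor{(tAA^T+(1-t)I)^{-2}QW}$, whereas you expand everything in the singular basis and apply Cauchy--Schwarz coordinatewise; the resulting inequalities and probability accounting are identical.
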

\begin{proof}
Equations~\eqref{eq:5} and~\eqref{eq:10} give
\begin{alignat*}{1}
  \wh{H}(t) - H(t) & = \scal{\wh{R}(t)- R(t)}{R'(t)} \\ 
  &   = \scal{(\Pi-\wh{\Pi}) Y - \Pi\eta }{\red{(A^T)^{\dagger}}
    B(t)^{-2} A^TY}           \nonumber  \\
&   = \scal{(\Pi-\wh{\Pi})Y- \Pi\eta }{
    (t AA^T+(1-t)I)^{-2} QY} ,\nonumber 
\end{alignat*}
where we observe that $t AA^T+(1-t)I$ is invertible on $\ran{Q}$ \red{and $A^TY=A^TQY$}.
Hence,
\begin{alignat*}{1}
  \abs{\wh{H}(t) - H(t)} & \leq \nor{ (\Pi-\wh{\Pi})Y-
    \Pi\eta} \times\\
& \quad \times\left(\nor{ (t AA^T+(1-t)I)^{-2}AX} + \sigma \nor{(t AA^T+(1-t)I)^{-2}QW} \right).
\end{alignat*}
Furthermore, recalling that $AX=\Pi AX$ and $\Pi u_i=0$ for all
$i>p$, \eqref{eq:55} implies that
\begin{alignat*}{1}
  \nor{ (t
    AA^T+(1-t)I)^{-2}AX}  & \leq \frac{\sigma_p}{
    (t\sigma_p^2+(1-t))^2} \nor{X} \lesssim
  \frac{1}{\sigma_p^3} ( \sqrt{h}+ \tau) \\
\nor{(t AA^T+(1-t)I)^{-2}QW} & \leq \frac{1}{
    (t\sigma_{d}^2+(1-t))^2} \nor{QW}  \lesssim
  \frac{1}{\sigma_{d}^4} ( \sqrt{d}+ \tau) 
\end{alignat*}
hold  with probability greater than $1-4 {\mathrm e}^{-\tau^2}$.  
Bound~\eqref{eq:58} provides the desired  claim.
\end{proof}

The uniform approximation result of Theorem \ref{thm:unifapprox}
allows us to claim that any $\wh t \in [0,1]$ such that $\wh
H(\wh t)=0$ can be attempted as a proxy for the optimal
parameter $t^*$, especially if it is the only root in the interval
 $(0,1)$. 
 
Nevertheless, being $\widehat H_{n}$ a sum of $d$  rational
functions of polynomial numerator of degree $1$ and polynomial
denominator of degree $3$, the computation of its zeros in $[0,1]$ is
equivalent to the computation of the roots of a polynomial of degree
$3(d-1)+1=3 d-2$. The computation cannot be done analytically for
$d>2,$ because it would require the solution of a polynomial equation
of degree larger than $4$. For $d>2,$ we are forced to use numerical
methods, but this is not a great deal as by now there are plenty of
stable and reliable routines to perform such a task (for instance, Newton method,
numerical computation of the eigenvalues of the companion matrix,
just to mention a few).  

We provide below relatively simple numerical experiments to validate the theoretical results reported above. In Figure \ref{fig2} we show optimal parameters $t^*$ and corresponding approximations $\wh t$ (computed by numerical solution to the scalar nonlinear equation $\widehat H_n(t) =0$ on $[0,1]$), for $n$ different data $Y =A X +\eta$. The accordance of the two parameters $t^*$ and $\wh t$ is visually very convincing and their statistical (empirical) distributions reported in Figure \ref{fig2stat} are also very close.

 \begin{figure}[h!]
  \centering
    \includegraphics[width=0.6\textwidth]{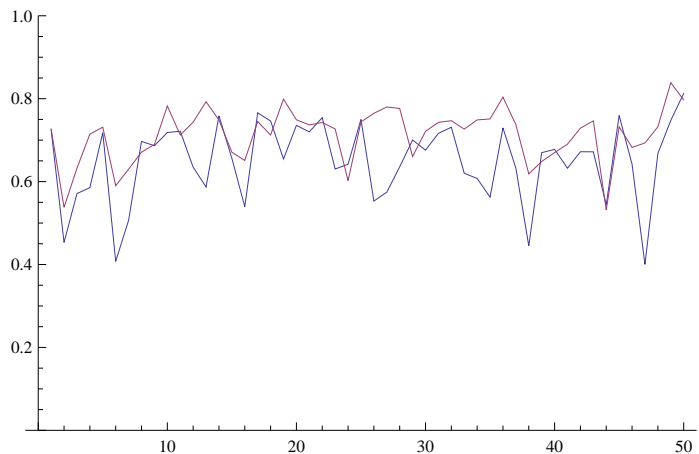}
  \caption{Optimal parameters $t^*$ and corresponding approximations $\wh t$ {\color{black} ($n=1000$)} for $50$ different new data $Y =A X +\eta$ for $X \in \mathbb R^d$ and $\eta\in \mathbb R^m$ Gaussian vectors, $d=200$, $m=60$ and $A \in \mathbb{R}^{60 \times 200}$. 
 Here we assumed that $X \in \mathcal V$ for $\mathcal V= \rm{span}\{e_1,\dots,e_5\}$.
  We designed the matrix in such a way that the spectrum is vanishing, {\emph i.e.,} $\sigma_{\min} \approx 0$. Here we considered as noise level $\sigma=0.03$, so that the optimal parameter $t^*$ is rather concentrated around $0.7$. The accordance of the two parameters $t^*$ and $\wh t$ is visually very convincing.}
\label{fig2}
\end{figure}

\begin{figure}[h!]
  \centering
    \includegraphics[width=0.4\textwidth]{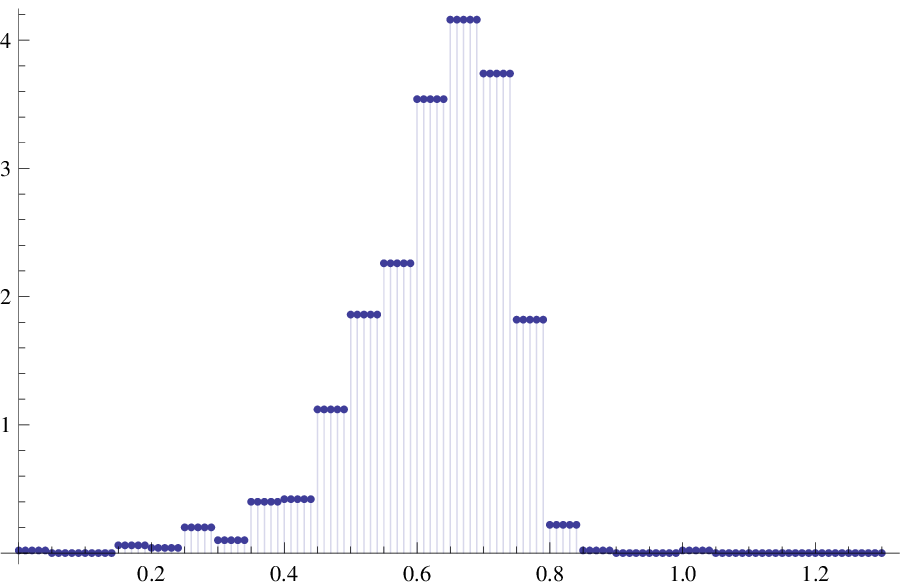}  \includegraphics[width=0.4\textwidth]{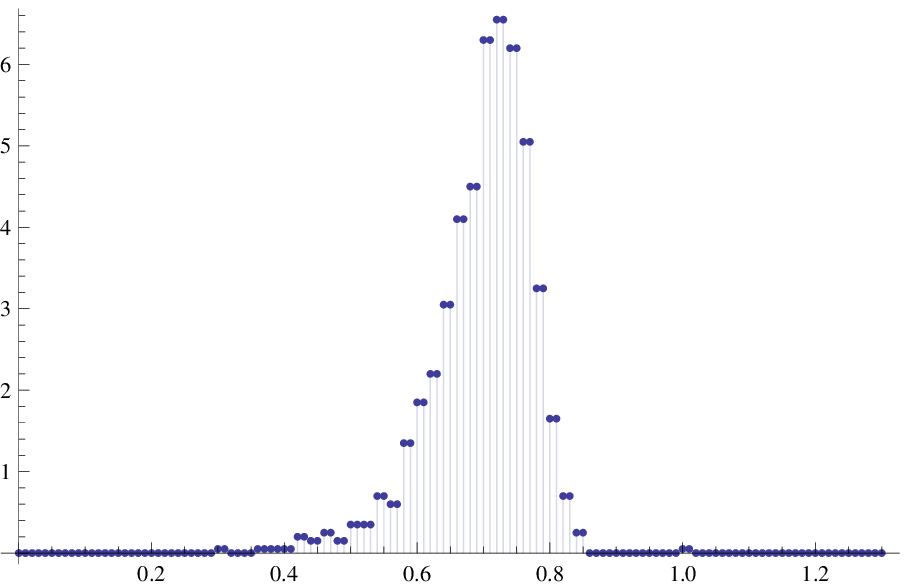}
  \caption{Empirical distribution of the optimal parameters $t^*$ (left) and the corresponding empirical distribution of the approximating parameters $\wh t$ (right) for $1000$ randomly generated
data $Y = A X +\eta$ with the same noise level. The statistical accordance of the two parameters $t^*$ and $\wh t$ is shown.}
\label{fig2stat}
\end{figure}

In the next two sections we discuss special cases where we can provide even more precise statements and explicit bounds.

\blue{
\section{Toy examples}\label{sec:examples}

In the following we specify the results of the previous in simple cases, which allow to understand more precisely the behavior of the different estimators presented in Theorem \ref{thm:1}.

\subsection{Deterministic sparse signal and Bernoulli noise in two dimensions}

From Theorem \ref{thm:1} one might conclude that the estimators $\widehat X, Z^{t^*}, Z^{\widehat t_n}$ are all performing an approximation to $X$ with a least error  proportional to $B(n,\tau,\sigma)$. With this in mind, one might be induced to conclude that computing $\widehat X$ is a sufficient effort, with no need of considering the empirical estimator $Z^{\widehat t_n}$, hence no need for computing $\widehat t_n$. In this section we discuss precisely this issue in some detail on a concrete toy example, for which explicit computations are easily performed.
\\

Let us consider a deterministic sparse vector $X=(1,0)^T \in \mathbb R^{2}$ and a stochastic noise $W=(W_1,W_2)^T\in \mathbb R^{2}$ with Bernoulli entries, i.e., $W_i=\pm 1$ with probability $1/2$. This noise distribution is indeed Subgaussian with zero mean.
For the sake of simplicity, we fix now an orthogonal matrix $A \in \mathbb R^{2\times 2}$, which we express in terms of its vector-columns as follows:
$
A = (A_1 | A_2)$.
Notice that $\|W\| = \sqrt 2$ with probability $1$ and $\|X\|=1$. Hence, we may want to consider a noise level $\sigma \in [-1/\sqrt 2, 1/\sqrt 2]$. 
In view of the fact that the sign of $\sigma$ would simply produce a symmetrization of the signs of the component of the noise, we consider from now on only the cases where $W_1\geq 0$. The other cases are obtained simply by changing the sign of $\sigma$.
\\
We are given measurements
$$
Y = A X + \sigma W.
$$
While $X$ is deterministic (and this is just to simplify the computations) $Y$ is stochastic given the randomness of the noise.
Hence, noticing that $Y = A X + \sigma W = A_1 + \sigma W$ and observing that $\mathbb E W= 0$, we obtain
$$
\Sigma_Y = \int Y Y^T d \mathbb P(W)= \int (A_1+ \sigma W)(A_1+ \sigma W)^T d  \mathbb P(W) = A_1 A_1^T + \sigma^2 I.
$$
It is also convenient to write $\Sigma_Y$ by means of its complete singular value decomposition
$$
\Sigma_Y = (A_1\, A_2) \left ( 
\begin{array}{ll}
1+ \sigma^2&0\\
0&\sigma^2
\end{array}
\right )
\left ( 
\begin{array}{l}
A_1^T\\
A_2^T
\end{array}
\right ).
$$

\subsection{Exact projection}

The projection onto the first largest singular space of $\Sigma_Y$ is simply given by
$$
\Pi \xi = \langle \xi ,A_1\rangle A_1.
$$
This implies that
$$
\Pi Y = A_1 + \sigma \langle W,A_1 \rangle A_1.
$$
From now on, we denote 
$$
\sigma_i = \sigma \langle W,A_i \rangle, \quad i=1,2.
$$
Let us stress that these parameters indicate how strong the particular {\it noise instance} $W$ is correlated with $\mathcal W = \operatorname{span} \{A_1 = A X\}$. If $|\sigma_1|$ is large, then there is strong concentration of the particular noise instance on $\mathcal W$ and necessarily $|\sigma_2|$ is relatively small. If, vice versa, $|\sigma_2|$ is large, then there is poor correlation of the particular noise instance with $\mathcal W$.\\
We observe now that, being $A$ orthogonal, it is invertible and
$$
I =A^T A = A^{-1} A = (A^{-1} A_1 \, A^{-1} A_2).
$$
This means that $A^{-1} A_1= (1,0)^T$ and $A^{-1} A_2= (0,1)^T$ and we compute explicitly
$$
\bar X = A^{-1} \Pi Y = A^{-1} A_1 + \sigma_1 A^{-1} A_1 = (1 + \sigma_1,0)^T.
$$
It is important to notice that for $\sigma_1$ small $\bar X$ is indeed a good proxy for $X$ and it does not belong to the set of possible
Tikhonov regularizers, i.e., the minimizers of the functional
\begin{equation}\label{tikreg}
 t \|A Z- Y \|^2_2+(1-t) \|Z\|^2_2
\end{equation}
for some $t \in [0,1]$. 
This proxy $\bar X$ for $X$ approximates it with error exactly computed by
\begin{equation}\label{errhat}
\| \bar X - X \| = |\sigma_1| \leq \sqrt 2 |\sigma|.
\end{equation}
The minimizer of \eqref{tikreg} is given by
\begin{eqnarray*}
Z^t &=& t ( tA^T A +(1-t) I)^{-1}A^T Y\\
&=& t ( t(A^T-A^{-1}) A + I)^{-1} \left ( 
\begin{array}{l}
A_1^T\\
A_2^T
\end{array}
\right ) (A_1 + \sigma W). 
\end{eqnarray*}
In view of the orthogonality of $A$ we have $A^T = A^{-1}$, whence  $ t(A^T-A^{-1}) A=0$, and the simplification
\begin{eqnarray*}
Z^t &=& t  \left ( 
\begin{array}{l}
A_1^T\\
A_2^T
\end{array}
\right ) (A_1 + \sigma W)\\
&=& t (1 + \sigma_1, \sigma_2)^T
\end{eqnarray*}
Now it is not difficult to compute
\begin{eqnarray*}
R(t)^2 = \| Z^t - X \|^2_2 =  (t (1+ \sigma_1) - 1)^2 + t^2 \sigma_2^2 = ((1+\sigma_1)^2+ \sigma_2^2) t^2 -2 t  (1+\sigma_1) + 1
\end{eqnarray*}
This quantity is optimized for
$$
t^* = \frac{1+\sigma_1}{(1+\sigma_1)^2+ \sigma_2^2}
$$
The direct substitution gives
\begin{eqnarray*}
R(t^*) &=& 
\frac{|\sigma_2|}{\sqrt{(1+\sigma_1)^2+ \sigma_2^2}}
\end{eqnarray*}
And now we notice that, according to \eqref{errhat} one can have either
$$
\| \bar X - X \| = |\sigma_1| < \frac{|\sigma_2|}{\sqrt{(1+\sigma_1)^2+ \sigma_2^2}} = \| Z^{t^*} - X \|
$$
or
$$
\| \bar X - X \| = |\sigma_1| > \frac{|\sigma_2|}{\sqrt{(1+\sigma_1)^2+ \sigma_2^2}} = \| Z^{t^*} - X \|
$$
very much depending on $\sigma_i = \sigma \langle W,A_i \rangle$. Hence, for the fact that $\bar X$ does not belong to the set of minimizers of \eqref{tikreg}, it can perfectly happen that it is a better proxy of $X$ than $Z^{t^*}$.
\\

Recalling that $\bar X = (1 + \sigma_1,0)^T$ and $Z^t = t (1 + \sigma_1, \sigma_2)^T$, let us now consider the error
\begin{eqnarray*}
\bar R(t)^2 &=& \| Z^t - \bar X\|^2\\
&=& 
((1+ \sigma_1)^2 + \sigma_2^2)t^2  -2 t (1+ \sigma_1)^2+ (1+ \sigma_1)^2.
\end{eqnarray*}
This is now opimized for 
$$
\bar t = \frac{(1+\sigma_1)^2}{(1+\sigma_1)^2+ \sigma_2^2}
$$
Hence we have
$$
R(\bar t)^2 =  \| Z^{\bar t} - X\|^2 = ((1+\sigma_1)^2+ \sigma_2^2) \bar t^2 -2 \bar t  (1+\sigma_1) + 1 = \frac{\sigma_1^2(1+\sigma_1)^2+ \sigma_2^2}{(1+\sigma_1)^2+ \sigma_2^2}
$$
or
$$
R(\bar t) = \frac{\sqrt{\sigma_1^2(1+\sigma_1)^2+ \sigma_2^2}}{\sqrt{(1+\sigma_1)^2+ \sigma_2^2}}
$$
In this case we notice  that, in view of the fact that $\sigma_1^2 \leq 1$ for $|\sigma| \leq 1/\sqrt{2}$ one can have  only
$$
\| \bar X - X \| = |\sigma_1| \leq \frac{\sqrt{\sigma_1^2(1+\sigma_1)^2+ \sigma_2^2}}{\sqrt{(1+\sigma_1)^2+ \sigma_2^2}}=|\sigma_1|  \frac{\sqrt{(1+\sigma_1)^2+ \sigma_2^2/\sigma_1^2}}{\sqrt{(1+\sigma_1)^2+ \sigma_2^2}}= \| Z^{\bar t} - X \|
$$
independently of $\sigma_i = \sigma \langle W,A_i \rangle$. The only way of reversing the inequality is by allowing noise level $\sigma$ such that
 $\sigma_1^2 > 1$, which would mean that we have noise significantly larger than the signal.

\subsubsection{Concrete examples}

Let us make a few concrete examples. 
Let us recall our assumpution that $\operatorname{sign}{W_1} =+1$ while  the one of $\operatorname{W_2}$ is kept arbitrary.
Suppose that $A_1 = \frac{W}{\|W\|}$, then $\sigma_1 = \sigma \sqrt 2$ and $\sigma_2=0$. In this case we get 
$$
\sigma \sqrt 2 = \| \bar X - X \| = \| Z^{\bar t} - X \|,
$$
and $\bar X = Z^{\bar t} $.
The other extreme case is given by $ A_2 = \frac{W}{\|W\|}$, then $\sigma_1 = 0$ and $\sigma_2=\pm \sigma \sqrt 2$.
In this case $\bar X=X$, while $R(\bar t)=R(t^*)=|\sigma| \sqrt{2/(1+2\sigma^2)}>0$. An intermediate case is given precisely by $A=I$, which we shall investigate in more generality in Section \ref{sec:AisI}. For this choice we have $\sigma_1 = \sigma$ and $\sigma_2=\pm \sigma$. Notice that the sign of $\sigma_2$ does not matter in the expressions of $R(\bar t)$ and $R(t^*)$ because it appears always squared. Instead the sign of $\sigma_1$ matters and depends on the choice of the sign of $\sigma$. We obtain
$$
R(t^*)= \frac{|\sigma|}{\sqrt{(1+\sigma)^2+ \sigma^2}},
$$
and
$$
R(\bar t)=\sqrt{\frac{\sigma^2(2+(\sigma(2+\sigma))}{1+2 \sigma(1+\sigma)}},
$$
and, of course
$$
\| \bar X - X \| = |\sigma|.
$$
In this case, $R(t^*)\leq \| \bar X - X \|$ if and only if $\sigma \geq 0$, while - unfortunately - $R(\bar t) > \| \bar X - X \|$ for all $|\sigma|<1$ and the inequality gets reversed only if $|\sigma|\geq1$.

 \begin{figure}[h!]
  \centering
    \includegraphics[width=1\textwidth]{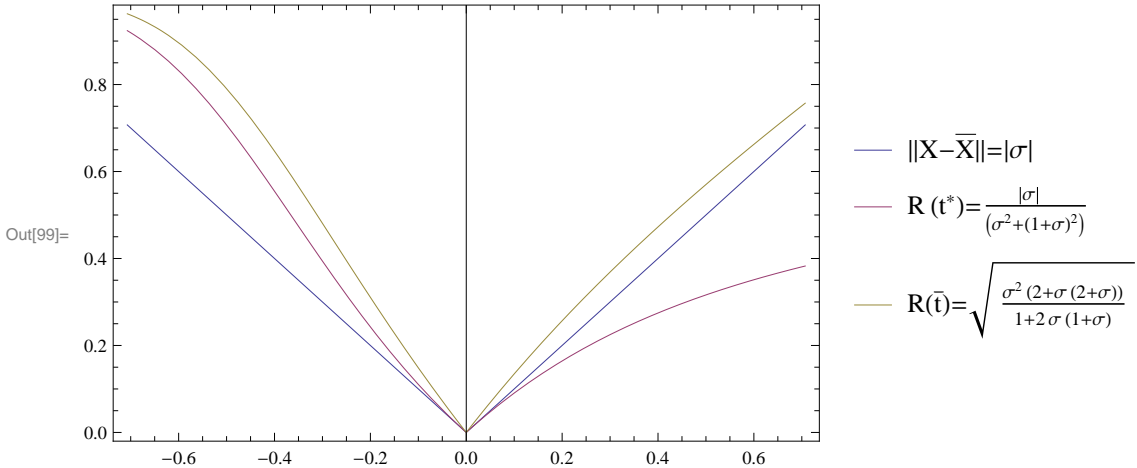}
  \caption{Comparison of $\| \bar X - X \|$, $R(t^*)$, and  $R(\bar t)$ as a function of $\sigma$ for $\sigma_1=\sigma$. }
\label{fig0:VE}
\end{figure}
In Figure \ref{fig1:VE} we illustrate the case of $\sigma_1 = 1.2 \sigma$
\begin{figure}[h!]
  \centering
    \includegraphics[width=1\textwidth]{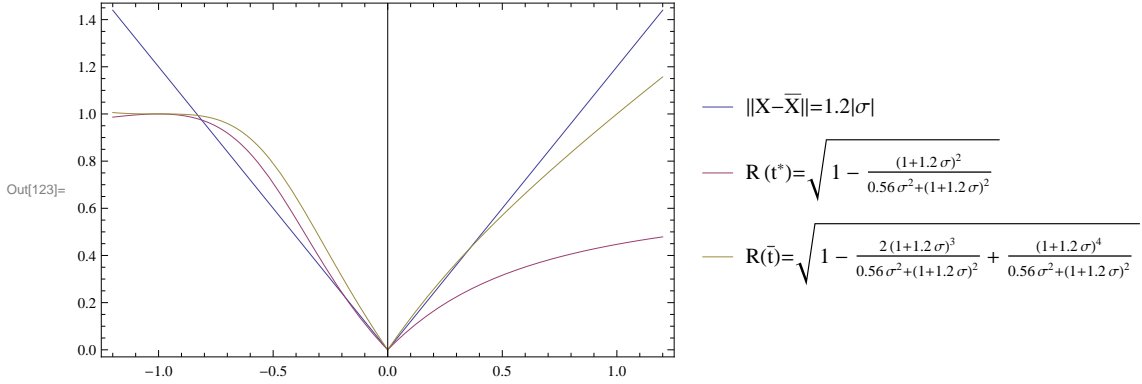}
  \caption{ Comparison of $\| \bar X - X \|$, $R(t^*)$, and  $R(\bar t)$ as a function of $\sigma$ for $\sigma_1=1.2 \sigma$. }
\label{fig1:VE}
\end{figure}
and in Figure \ref{fig2:VE} the case of $\sigma_1 = 0.7 \sigma$.
\begin{figure}[h!]
  \centering
    \includegraphics[width=1\textwidth]{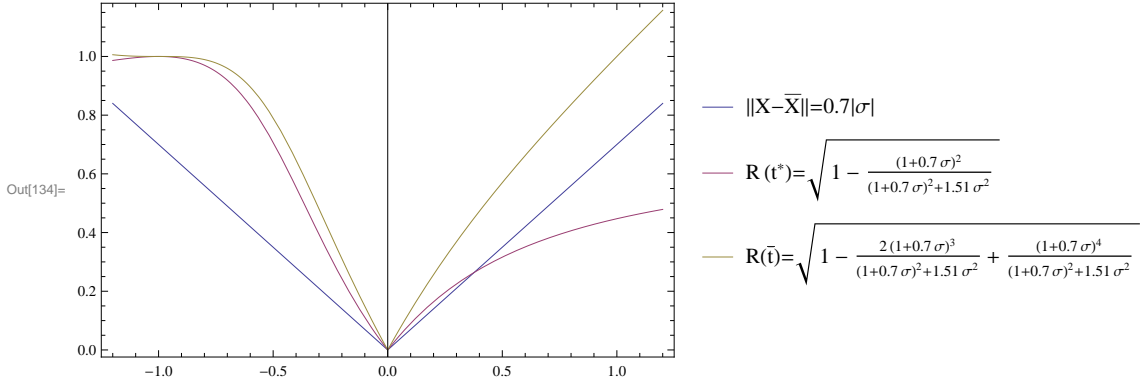}
  \caption{ Comparison of $\| \bar X - X \|$, $R(t^*)$, and  $R(\bar t)$ as a function of $\sigma$ for $\sigma_1=0.7 \sigma$. }
\label{fig2:VE}
\end{figure}
Let us now focus for a moment on the purely denoising case $A=I$. In this case, the projection $\Pi$ is actually the projection onto the subspace where $X$ is defined. While $\bar X$ belongs also to that subspace, this is not true for $Z^t$ in general. Since we do dispose of (an approximation) to $\Pi$ when $A=I$, it would be better to compare
$$
\| \bar X - X \|, \| \Pi Z^{t^*} - X \|, \| \Pi Z^{\bar t} - X \|,
$$
as functions of $\sigma$.
In general, we have
$$
R^\Pi(t)^2 = \| \Pi Z^{t} - X \|^2 = (1+\sigma_1)^2 t^2 -2 t  (1+\sigma_1) + 1,
$$
and
$$
R^\Pi(\bar t) =\frac{\left(\sigma_2^2-\sigma_1
   (\sigma_1+1)^2\right)}{\left(\sigma_2^2+(\sigma_2+1)^2\right)},
$$
 $$
R^\Pi(t^*) =\frac{\sigma_2^2}{\left(\sigma_2^2+(\sigma_1+1)^2\right)}.
$$
The comparison of these quantities is reported in Figure \ref{fig3:VE} and one can observe that, at least for $\sigma\geq 0$, both $\Pi Z^{t^*}$ and $\Pi Z^{\bar t}$ are better approximations of $X$ than $\bar X$.
\begin{figure}[h!]
  \centering
    \includegraphics[width=1\textwidth]{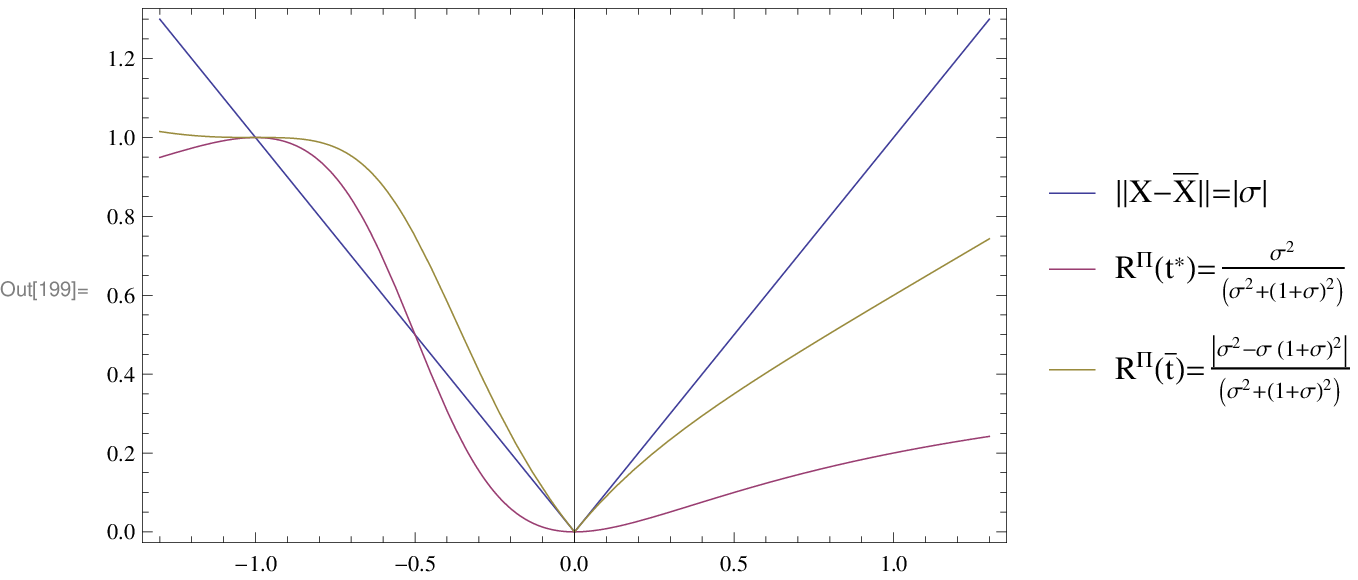}
  \caption{Comparison of $\| \bar X - X \|$, $R^\Pi(\bar t)$, and $R(t^*)$ as a function of $\sigma$ for $\sigma_1=\sigma$. }
\label{fig3:VE}
\end{figure}
\subsection{Empirical projections}

In concrete applications we do not dispose of $\Pi$ but only of its empirical proxy $\widehat \Pi_n$. According to Theorem \ref{thm:QQnbis} they are related by the 
approximation
$$
\| \Pi - \widehat \Pi_n\| \leq \frac{1}{\lambda_{min}} \left ( \sqrt{\frac{m}{n}} + \frac{\tau}{\sqrt{n}} \right ),
$$
with high probability (depending on $\tau>0$), where $m=1$ in our example and $\lambda_{min}=1$ is the smallest
nonzero eigenvalue of $\Sigma_{AX} = A_1 A_1^T$. Hence, we compute
$$
\widehat X = A^{-1} \widehat \Pi_n Y,
$$
and, since $A$ is an orthogonal matrix
\begin{eqnarray*}
\|X- \widehat X  \| &=& \| A^{-1} (\Pi - \widehat \Pi_n) Y\|\\
&=& \|  (\Pi - \widehat \Pi_n) Y\|\\
&\leq& \frac{1+ \tau}{\sqrt{n}} \|A_1 + \sigma W \|\\
&\leq & \frac{1+ \tau}{\sqrt{n}} \sqrt{1+2 \sigma^2 + 2 \sigma_1} \leq  \frac{1+ \tau}{\sqrt{n}} (1 + \sqrt 2 \sigma).
\end{eqnarray*}
Here we used
$$
\| A_1 + \sigma W\|^2 = \| A_1 \|^2+ + \sigma^2 \| W\|^2 +2 \sigma \langle A_1, W \rangle = 1+2 \sigma^2 + 2 \sigma_1.
$$
In view of the approximation relationship above, we can express now 
$$
 \widehat X = (1+\sigma_1+ \epsilon_1, \epsilon_2)^T,
$$
where $\sqrt {\epsilon_1^2 + \epsilon_2^2} \leq   \frac{1+ \tau}{\sqrt{n}} (1 + \sqrt 2 \sigma)$. Then
$$
\| X - \widehat X \| = \sqrt{(\sigma_1+\epsilon_1)^2 + \epsilon_2^2}.
$$
One has also
\begin{eqnarray*}
R_n(t)^2=\| Z^t - \widehat X \|^2 &=& (t(1+ \sigma_1) - (1+ \sigma_1 + \epsilon_1))^2 + (t \sigma_2 - \epsilon_2)^2\\
&=& t^2 ((1+\sigma_1)^2 + \sigma_2^2) - 2 t ( \sigma_2 \epsilon_2 + (1+\sigma_1)\epsilon_1+ (1+ \epsilon_1)^2) + const.
\end{eqnarray*}
Hence, its optimizer is given by
$$
\widehat t_n = \frac{( \sigma_2 \epsilon_2 + (1+\sigma_1)\epsilon_1+ (1+ \epsilon_1)^2)}{(1+\sigma_1)^2 + \sigma_2^2}
$$
and
$$
R(\widehat t_n )=\| Z^{\widehat t_n} - X\| =\sqrt{\frac{( \sigma_2^2(1+\epsilon_2)^2+ 2 \epsilon_2(1+\sigma_1)(\epsilon_1+\sigma_1)\sigma_2+ (1+ \sigma_1)^2(\epsilon_1+\sigma_1)^2)}{(1+\sigma_1)^2 + \sigma_2^2}}
$$

\subsubsection{Concrete example}

We compare in Figure \ref{fig4:VE} the behavior of the difference of the errors $$\|X- \widehat X  \|-\| Z^{\widehat t_n} - X\|$$ depending on $\sigma$, for $\sigma_1=1.3 \sigma $ and $n=100$. In this case, the empirical estimator $Z^{\widehat t_n}$ is a significantly better approximation to $X$ than $\widehat X$ for all noise levels $\sigma \in [-0.07,1/\sqrt{2}]$, while $\widehat X$ keeps being  best estimator, e.g., for $\sigma \in [-1/\sqrt{2},-0.1]$.
\begin{figure}[h!]
  \centering
    \includegraphics[width=1\textwidth]{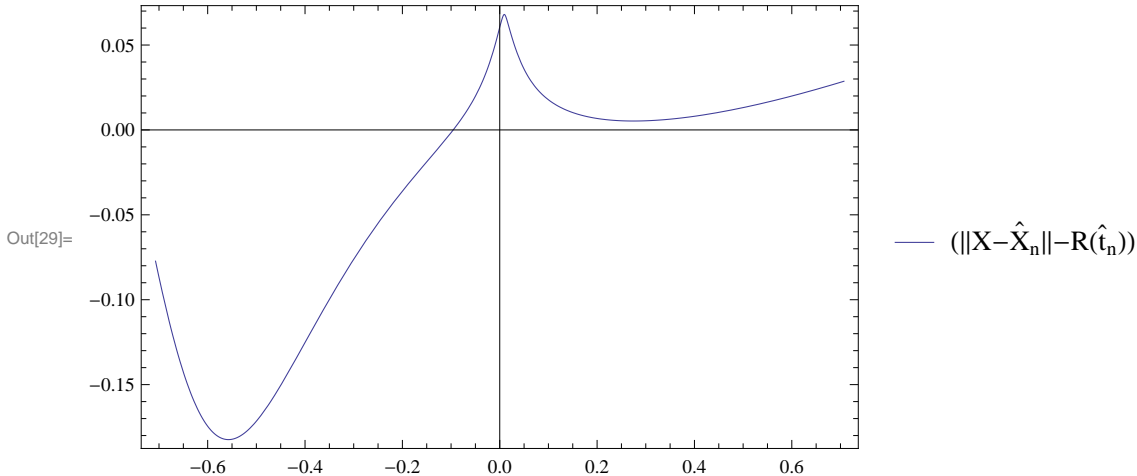}
  \caption{The difference of the errors $\|X- \widehat X  \|-\| Z^{\widehat t_n} - X\|$ as a function of $\sigma$ for $\sigma_1=1.3 \sigma $. For $\sigma \in [-0.07,1/\sqrt{2}]$ one has $\|X- \widehat X  \| > \| Z^{\widehat t_n} - X\|$.}
\label{fig4:VE}
\end{figure}

From these simple two dimensional toy examples and related numerical experiments we deduce the following general principles:
\begin{itemize}
\item The performances of the estimators $\bar X, Z^{t^*}, Z^{\widehat t_n}$ depend very much on how the noise is distributed with respect to the subspace $\mathcal W$ to which $A X$ belongs and its signature; if the noise is not very much concentrated on such subspace, then $\bar X$ tends to be a better estimator, see Fig. \ref{fig2:VE}, while a concentrated noise would make $\bar X, Z^{\widehat t_n}$ rather equivalent and $Z^{t^*}$ the best one for some ranges of noise, see Fig.  \ref{fig0:VE} and  Fig. \ref{fig1:VE}.
\item If the number $n$ of samples is not very large (in the  example above we considered $n=100$), so that the approximation $A^{-1} \widehat \Pi_n Y=\widehat X \approx \bar X=A^{-1}\Pi Y$ would be still too rough, then the estimator $Z^{\widehat t_n}$ may beat $\widehat X$ for significant ranges of noise,  independently of its instance correlation with $\mathcal W$, see Fig. \ref{fig4:VE}.
\end{itemize}
Since it is a priori impossible to know precisely in which situation $\widehat X,  Z^{\widehat t_n}$ perform better as estimators (as it depends on the correlation with $\mathcal W$ of the particular noice instance), in the practice it will be convenient to compute them both, especially when one does not dispose of a large number $n$ of samples.

}

\subsection{The case $A=I$}\label{sec:AisI}
As yet one more example, this time in higher dimension, we consider the simple case
where  $m=d$ and $A=I$, so that $\mathcal W = \mathcal V$.
In this case, we get that
\[
R(t)=-(1-t)X+t \eta.
\]
If $Y\neq 0$,  an easy computation shows that the minimizer of
the reconstruction error $\nor{R(t)}^2$  is 
\begin{equation}
  \label{eq:33}
  t^*=t^*(Y,X)=\varphi\left(\frac{\scal{Y}{X}}{\scal{Y}{Y}}\right),
  \end{equation}
  where
 $$
 \varphi (s)= 
  \begin{cases}
   0 & \text{ if } s \leq 0 \\
   s  & \text{ if } 0<s<1 \\
    1 & \text{ if } s \geq1
  \end{cases}.
$$
If $Y=0$, the solution $Z^t$ does not depend on $t$, so that there is
not a unique optimal parameter and we set $t^*=0$.

We further assume that $X$ is bounded from  $0$ with high probability,
more precisely,
\begin{equation}
  \label{eq:assump}
  {\mathbb P}[\nor{X}< r] \leq 2 \exp\left (-\frac{1}{r^2} \right).
\end{equation}
This assumption is necessary to avoid that the noise is much
  bigger than the signal. 
\begin{theorem}\label{thm:main}
Given $\tau\geq 1$,  with probability greater than $1-\red{6} {\mathrm e}^{-\tau^2}$
\begin{equation}
\abs{\wh{t}-t^*}\leq  
\frac{1 }{\la_{\min}} \left(\sqrt{\frac{d}{n}}+  \frac{\tau}{\sqrt{n}} 
  +\sigma^2\right) + \sigma \ln\left(\frac{\mathrm{e}}{\sigma}\right) (\sqrt{h}+\tau),
\label{eq:38}
\end{equation}
provided that 
 \begin{subequations}
   \begin{align}
     n & \gtrsim (\sqrt{d}+\tau)^2\max\left \{\frac{64}{\la_{\min}^2},1\right \}\label{eq:38a}\\
     \sigma& < \min \left \{\sqrt{\frac{\la_{\min}}{8}}, {\mathrm e}^{1-16\tau^2} \right \}.\label{eq:38b}
   \end{align}
 \end{subequations}

\end{theorem}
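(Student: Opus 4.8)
The plan is to exploit that for $A=I$ the whole problem collapses to a scalar optimisation in the direction of $Y$. First I would record the closed forms: since $A=I$ we have $Z^t=tY$ for $t\in[0,1]$ (in particular $Z^1=A^\dagger Y=Y$) and $\widehat{X}=A^\dagger\wh{\Pi}Y=\wh{\Pi}Y$; minimising the scalar quadratics $t\mapsto\nor{tY-X}^2$ and $t\mapsto\nor{tY-\widehat{X}}^2$ over $[0,1]$ yields, whenever $Y\neq0$, $t^*=\varphi(\scal{Y}{X}/\nor{Y}^2)$ (this is~\eqref{eq:33}) and $\wh{t}=\varphi(\scal{Y}{\widehat{X}}/\nor{Y}^2)$, where $\varphi$ is the $1$-Lipschitz projection onto $[0,1]$. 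Hence, by the Lipschitz bound and Cauchy--Schwarz,
\[
\abs{\wh{t}-t^*}\le\frac{\abs{\scal{Y}{\widehat{X}-X}}}{\nor{Y}^2}\le\frac{\nor{\widehat{X}-X}}{\nor{Y}}.
\]
Since $\Pi$ is the orthogonal projection onto $\mathcal V$ and $X\in\mathcal V$ almost surely, $\Pi Y=X+\Pi\eta$, so $\widehat{X}-X=(\wh{\Pi}-\Pi)Y+\Pi\eta$ and therefore
\[
\abs{\wh{t}-t^*}\ \le\ \nor{\wh{\Pi}-\Pi}+\frac{\sigma\nor{\Pi W}}{\nor{Y}}.
\]

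The first summand is exactly what Theorem~\ref{thm:QQn} controls with $m=d$: under~\eqref{eq:38a}--\eqref{eq:38b} it is $\lesssim\la_{\min}^{-1}(\sqrt{d/n}+\tau/\sqrt n+\sigma^2)$ off an event of probability at most $2\mathrm{e}^{-\tau^2}$, which produces the first term of~\eqref{eq:38}. For the second summand I would bound numerator and denominator separately. The numerator satisfies $\nor{\Pi W}\lesssim\sqrt h+\tau$ with probability at least $1-\mathrm{e}^{-\tau^2}$, because $\Pi W$ is a centred sub-gaussian vector taking values in the $h$-dimensional space $\mathcal V$ (the third estimate of~\eqref{eq:55}). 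For the denominator I would use $\nor{Y}\ge\nor{\Pi Y}\ge\nor{X}-\sigma\nor{\Pi W}$ together with the anti-concentration hypothesis~\eqref{eq:assump}: choosing the radius $r\asymp1/\sqrt{\ln(\mathrm{e}/\sigma)}$ makes $\mathbb P[\nor{X}<r]\le2\mathrm{e}^{-1/r^2}$ comparable to $\mathrm{e}^{-\tau^2}$, which is precisely why~\eqref{eq:38b} forces $\sigma$ to be exponentially small in $\tau^2$; on that event $\nor{X}\gtrsim1/\sqrt{\ln(\mathrm{e}/\sigma)}$.

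It remains that $\nor{Y}$ could still be tiny if $\sigma\nor{\Pi W}$ is of the same order as $\nor{X}$, and I would dispose of this by a dichotomy. If $\sigma\sqrt{\ln(\mathrm{e}/\sigma)}\,(\sqrt h+\tau)$ exceeds a fixed absolute constant, then, since $\ln(\mathrm{e}/\sigma)\ge1$, the right-hand side of~\eqref{eq:38} already dominates $1\ge\abs{\wh{t}-t^*}$ and there is nothing to prove. In the complementary regime $\sigma\nor{\Pi W}\le\tfrac{1}{2}\nor{X}$, so $\nor{Y}\ge\tfrac{1}{2}\nor{X}\gtrsim1/\sqrt{\ln(\mathrm{e}/\sigma)}$, whence $\sigma\nor{\Pi W}/\nor{Y}\lesssim\sigma\sqrt{\ln(\mathrm{e}/\sigma)}(\sqrt h+\tau)\le\sigma\ln(\mathrm{e}/\sigma)(\sqrt h+\tau)$. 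Adding the two summands and taking a union bound over the exceptional events (those of Theorem~\ref{thm:QQn}, of the tail estimate for $\nor{\Pi W}$, and of~\eqref{eq:assump}) gives~\eqref{eq:38} with probability at least $1-6\mathrm{e}^{-\tau^2}$; conditions~\eqref{eq:38a}--\eqref{eq:38b} are exactly those required by Theorem~\ref{thm:QQn} and by the choice of~$r$.

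The step I expect to be the main obstacle is the lower bound on $\nor{Y}$. Without a quantitative lower-tail estimate on $\nor{X}$ the ratio $\nor{\widehat{X}-X}/\nor{Y}$ cannot be controlled at all, which is why~\eqref{eq:assump} is imposed; and it is the bookkeeping of calibrating the radius $r$ to $\sigma$ so as to keep the failure probability at scale $\mathrm{e}^{-\tau^2}$, to absorb the resulting $\sqrt{\ln(\mathrm{e}/\sigma)}$ into the $\ln(\mathrm{e}/\sigma)$ of the statement, and to verify that~\eqref{eq:38b} suffices for the dichotomy above, that requires care. The remaining ingredients — the closed forms, the Lipschitz/Cauchy--Schwarz reduction and the application of Theorem~\ref{thm:QQn} — are routine.
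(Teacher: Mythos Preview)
Your proposal is correct and follows the same overall architecture as the paper: the Lipschitz/Cauchy--Schwarz reduction to $\nor{\widehat X-X}/\nor{Y}$, the split $\nor{\wh{\Pi}-\Pi}+\sigma\nor{\Pi W}/\nor{Y}$, Theorem~\ref{thm:QQn} for the first summand, and anti-concentration~\eqref{eq:assump} to lower-bound the denominator. The one substantive difference is how you obtain $\nor{Y}\gtrsim$ something. The paper expands $\nor{Y}^2=\nor{X}^2+2\sigma\scal{X}{W}+\sigma^2\nor{W}^2$, introduces the bad event $\Omega=\{\nor{X}<r\}\cup\{2\sigma\scal{X}{W}<-\nor{X}^2/2\}$, and controls the cross-term by a conditioning argument with~\eqref{eq:A3c}; the radius is $r=16\tau/\ln(\mathrm{e}/\sigma)$, which leads directly to $\nor{Y}\gtrsim 1/\ln(\mathrm{e}/\sigma)$. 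You instead project onto $\mathcal V$, use the triangle inequality $\nor{Y}\ge\nor{X}-\sigma\nor{\Pi W}$, take $r\asymp 1/\sqrt{\ln(\mathrm{e}/\sigma)}$, and dispose of the subtracted term by a dichotomy. Your route is a bit more elementary (it reuses the $\nor{\Pi W}$ bound rather than opening a separate tail event for $\scal{X}{W}$, so the union bound in fact needs only five events), and it yields the sharper intermediate factor $\sqrt{\ln(\mathrm{e}/\sigma)}$ before you relax it to $\ln(\mathrm{e}/\sigma)$; the paper's route avoids the dichotomy and the case distinction. Either works under~\eqref{eq:38a}--\eqref{eq:38b}.
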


\begin{proof}
Without loss of generality, we assume that $\la_{\min}\leq 8$. 
Furthermore, on the event $\set{Y=0}$, by definition $t^*=\wh{t}=0$, so
that we can further assume that $Y\neq 0$.

Since $\varphi$ is a Lipschitz continuous function with Lipschitz constant 1,
\begin{align*}
    \abs{\wh{t}-t^*}&\leq \dfrac{\abs{\scal{Y}{\widehat{X} -X}}}{\nor{Y}^2} 
                      \label{eq:39}\\
                    & \leq  \dfrac{\nor{ (\Pi - \wh{\Pi}) Y -
                      \Pi\eta}}{\nor{Y}}   \nonumber\\ 
& \leq        \nor{ (\Pi - \wh{\Pi}) } + \sigma \dfrac{\nor{\Pi W}}{\nor{Y}}    , \nonumber 
  \end{align*}
where the second inequality is consequence of
~\eqref{eq:47}. Since~\eqref{eq:38a} and~\eqref{eq:38b}
imply~\eqref{cond} and $m=d$, by~\eqref{eq:27} we get
\[
\nor{\wh{\Pi}- \Pi} \lesssim \frac{1 }{\la_{\min}}
 \left(\sqrt{\frac{d}{n}}+  \frac{\tau}{\sqrt{n}} +\sigma^2 \right) \]
 with probability greater than $1 -2 \mathrm{e}^{-\tau^2}$. It is now convenient to denote the probability distribution of $X$ as $\rho_X$, {\it i.e.}, $X \sim \rho_X$.
Fixed $r>0$, set 
\[ \Omega=\set{ \nor{X}< r}\cup\set{ 2 \sigma \scal{X}{W} <-\nor{X}^2/2 },\]
whose probability is bounded by
\begin{alignat*}{1}
  \mathbb P[\Omega] & \leq \mathbb P[\nor{X}<r] + \mathbb P[ 4\sigma \scal{X}{W} <-
  \nor{X}^2 , \red{\nor{X}\geq r}] \\
& =\mathbb P[\nor{X}<r]+\int\limits_{\nor{x}\geq r} \mathbb P[ 4 \sigma \scal{x}{W} < -\nor{x}^2]
  \,d\rho_X(x)\\ 
& \leq\mathbb P[\nor{X}<r]+\int\limits_{\nor{x}\geq r} \exp \left (-\frac{\nor{x}^2}{256\sigma^2} \right)  \,d\rho_X(x) \\
& \leq \mathbb P[\nor{X}<r]+\exp\left (-\frac{r^2}{256\sigma^2} \right),
\end{alignat*}
where we use~\eqref{eq:A3c} with $\xi=-W$ (and the fact that $W$ and
$X$ are independent),  $\tau=\nor{x}/(16\sigma)$
and $\nor{W}_{\psi_2}=1/\sqrt{2}$.  With the choice $r= 16\tau/\ln(\mathrm{e}/\sigma),$ we obtain
\begin{alignat*}{1}
 \mathbb P[\Omega]& \leq \mathbb P[\nor{X}< 16
\tau/\ln(\mathrm{e}/\sigma)]+\exp \left (-\frac{\tau^2}{ \sigma^2\ln^2(\mathrm{e}/\sigma)} \right) \\
& \leq \mathbb P[\nor{X}< 16\tau/\ln(\mathrm{e}/\sigma)]+\exp(-\tau^2) ,
\end{alignat*}
where $\sigma\mapsto \sigma \ln(\mathrm{e}/\sigma)$ is an increasing positive
function on $(0,1]$, so that it is bounded by 1.  Furthermore,  by~\eqref{eq:38b}, {\em i.e.,} $
16\tau/ \ln(\mathrm{e}/\sigma)\leq\frac{1}{\tau}$, we have
\[
\mathbb P[\nor{X}<16\tau/\ln(\mathrm{e}/\sigma)]\leq 
P[\nor{X}<\frac{1}{\tau}]\leq 2 \exp(-\tau^2),
\]
by Assumption~\eqref{eq:assump}.  \red{Hence it holds that 
$\mathbb P[\Omega]\leq 3 e^{-\tau^2}$.}

 On the  event $\Omega^c$
\begin{alignat*}{1}
  \nor{Y}^2 & = \nor{X}^2+2\sigma\scal{X}{ W}+\sigma^2\nor{W}^2 \\
& \geq \nor{X}^2+ 2 \sigma \scal{X}{W}\geq \nor{X}^2/2 \\
& \geq r^2/2 \simeq\tau^2/\ln^2(\mathrm{e}/\sigma)\geq 1/\ln^2(\mathrm{e}/\sigma)
\end{alignat*}
since $\tau\geq 1$.  Finally,~\eqref{eq:57} with $\xi=\Pi W\in\mathcal W$ yields
\[
\nor{\Pi W} \lesssim (\sqrt{h}+\tau) 
\]
with probability greater than $1 - \exp(-\tau^2)$. Taking into account the above estimates, we conclude with probability greater than $1 - \red{4} \exp(-\tau^2)$ that 
$$
\frac{\| \Pi W \|}{\|Y\|} \lesssim \ln{\frac{\mathrm e}{\sigma}} (\sqrt{h} + \tau).
$$
Then, with probability greater than $ 1 - \red{6} \exp(-\tau^2)$, we conclude the estimate
\[
\abs{\wh{t}-t^*} \lesssim \frac{1 }{\la_{\min}}
\left(\sqrt{\frac{d}{n}}+  \frac{\tau}{\sqrt{n}} 
  +\sigma^2\right) + \sigma \ln(\mathrm{e}/\sigma)  (\sqrt{h}+\tau).
\]
\end{proof}
\begin{remark}
  The function $\ln(\mathrm{e}/\sigma)$ can be replaced by any
  positive function $f(\sigma)$ such that $\sigma f(\sigma)$ is an
  infinitesimal function bounded by 1 in the interval $(0,1]$. The
  condition~\eqref{eq:38b} becomes
  \[
  \sigma < \min \left \{\sqrt{\frac{\la_{\min} }{8}}, 1 \right \} \qquad
  f(\sigma)\geq 16 \tau^2,
  \]
  and, if $f$ is strictly decreasing,
  \[
  \sigma < \min\left \{ \sqrt{\frac{\la_{\min} }{8}}, 1,f^{-1}(16
    \tau^2) \right \}.
  \]
\end{remark}
Theorem~\ref{thm:main}  shows that if the number $n$ of examples is large enough and the
noise level is small enough, the estimator $\wh{t}$ is a good
approximation of the optimal value $t^*$. Let us stress very much that the number $n$ of samples
needed to achieve a good accuracy depends at most algebraically on the dimension $d$, more precisely
$n= \mathcal O(d)$. Hence, in this case one does not incur in the {\it curse of dimensionality}.
Moreover, the second term of the error estimate \eqref{eq:38} gets smaller for smaller dimensionality $h$.

\begin{remark}
If there exists an orthonormal  basis $(e_i)_i$ of $\R^d,$ such that the
random variables 
$\scal{W}{e_1},\ldots,\scal{W}{e_d} $ are independent with $\mathbb E[\scal{W}{e_1}^2]=1$, then
\citet[Theorem~2.1]{ruve13}  showed that $\nor{W}$ concentrates around
$\sqrt{d}$ with high probability. Reasoning as in the proof of
Theorem~\ref{thm:main},  by replacing $\nor{X}^2$ with
$\sigma^2\nor{W}^2,$ with high probability it holds that
\[
\abs{\wh{t}-t^*} \lesssim \frac{1 }{\la_{\min}}
\left(\sqrt{\frac{d}{n}}+  \frac{\tau}{\sqrt{n}} 
  +\sigma^2\right) + \frac{1}{\sqrt{d}} (\sqrt{h}+\tau) \, \frac{\tau}{\sqrt{d}}
\]
without assuming condition~\eqref{eq:assump}.
\end{remark}

In Figures \ref{fig0}--\ref{fig0stat}, we show examples of numerical accordance between optimal and estimated regularization parameters. In this case, the agreement between optimal parameter $t^*$ and learned parameter $\widehat t_n$ is overwhelming.

 \begin{figure}[h!]
  \centering
    \includegraphics[width=0.6\textwidth]{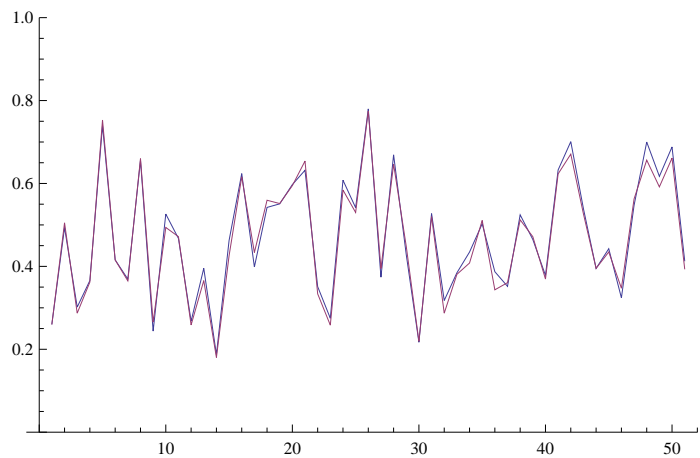}
  \caption{Optimal parameters $t^*$ and corresponding approximations $\widehat t_n$ {\color{black} ($n=1000$)} for $50$ different data $Y = X +\eta$ for $X$ and $\eta$ generated randomly with Gaussian distributions in $\R^d$ for $d=1000$. We assume that $X \in \mathcal V$ for $\mathcal V= \rm{span}\{e_1,\dots,e_5\}$. }
\label{fig0}
\end{figure}

\begin{figure}[h!]
  \centering
    \includegraphics[width=0.4\textwidth]{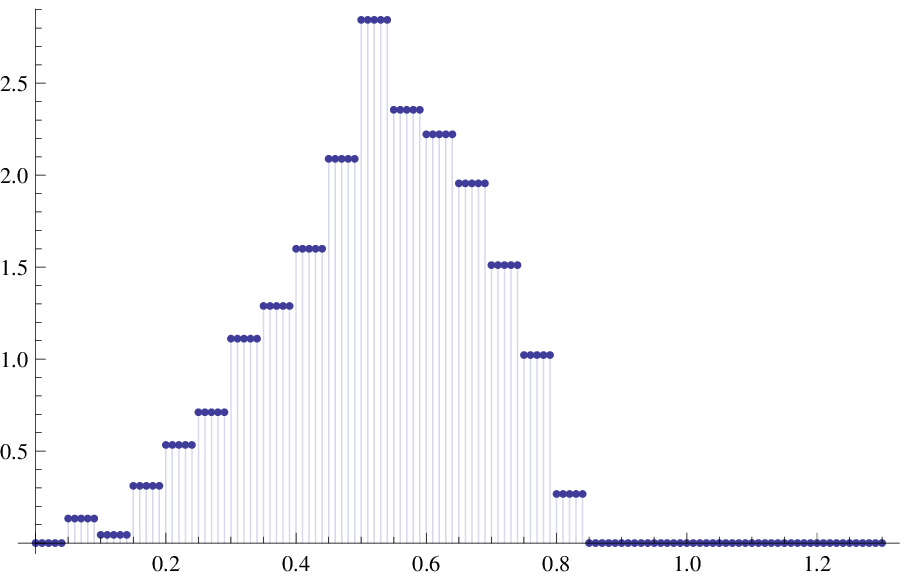}  \includegraphics[width=0.4\textwidth]{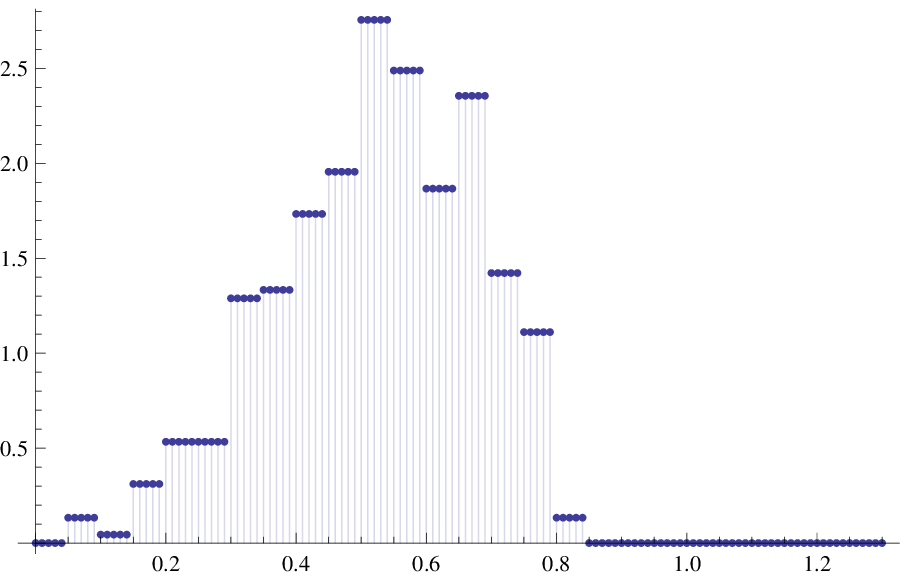}
  \caption{Empirical distribution of the optimal parameters $t^*$ (left) and the corresponding empirical distribution of the learned parameters $\widehat t_n$ (right) for $500$ randomly generated
data $Y = X +\eta$ with the same noise level.}
\label{fig0stat}
\end{figure}

\section{An explicit formula by linearization}\label{sec:5}

While it is not possible to solve the equation $H(t) \approx \widehat H_{n}(t) =0$ by analytic methods for $d>2$ in the general case, one might attempt a linearization of this equation in certain regimes. 
It is well-known that the optimal Tikhonov regularization parameter $\alpha^*=(1-t^*)/t^*$ converges to $0$ for vanishing noise level and this means that   $t^*=t^*(\sigma) \to 1$ as $\sigma \to 0$.
Hence, if the matrix $A$ has a significant spectral gap, \emph{i.e.,} $\sigma_1 \geq \sigma_2 \geq \dots \geq \sigma_d \gg 0$ and $\sigma\approx 0$ is small enough, then 
\begin{equation}\label{eq:50}
\sigma_i \gg (1-t^*),
\end{equation}
and in this case
\begin{equation*}\label{eq:51}
\widehat h_i(t) = \frac{(\sigma_i \widehat \nu_i \widehat \xi_i^{-1}+1)t -1 }{ ((1 - t)+ t\sigma_i^2)^3} \approx \frac{(\sigma_i \widehat \nu_i \widehat \xi_i^{-1}+1)t -1 }{ \sigma_i^6}, \quad t \approx t^*.
\end{equation*}
The above linear approximation is equivalent to replacing $B(t)^{-1}$ with
$B(1)^{-1}=(A^TA)^{-1}$ and
Equation ~\eqref{eq:7} is replaced by the following proxy (at least if $t\approx t^*$)
\begin{align*}
  \wh{H}^{\text{lin}}(t) 
&  =  \scal{ t AA^T(Y- \wh{\Pi} Y) -(1-t) \wh{\Pi} Y }{(AA^T)^{\dagger
    3} Y  }\\
& = \scal{A(A^TA)^{-3}  ( -(1-t) \widehat{X} + t A^T\widehat{\eta})
   }{A \widehat{X} +  \widehat{\eta} ) } \\
 & =  \left( \scal{A (A^TA)^{-3}(\widehat{X} + A^T\widehat{\eta})
   }{A \widehat{X} +  \widehat{\eta}  } \right) t - \scal{A(A^TA)^{-3} \widehat{X}
   }{A \widehat{X} +  \widehat{\eta}  } \nonumber \\
& = \left ( \sum_{i=1}^d  \frac{\widehat \alpha_i}{\sigma_i^5}(\sigma_i
  \widehat \nu_i \widehat \xi_i^{-1}+1) \right ) t - \sum_{i=1}^d  \frac{\widehat
  \alpha_i}{\sigma_i^5} .\nonumber
\end{align*}
The only zero of
$\wh{H}^{\text{lin}}(t)$ is
\begin{alignat*}{1}
 \wh{t}^{\text{lin}}  & =  \dfrac { \scal{\wh{\Pi} Y}{(AA^T)^{\dagger 3} Y  }} 
{\scal{ AA^T(Y- \wh{\Pi} Y) +\wh{\Pi} Y }{(AA^T)^{\dagger 3} Y  } }
\\
& = 1-  \dfrac{ \scal{ Y- \wh{\Pi} Y }{(AA^T)^{\dagger 2} Y  }} 
{\scal{ AA^T(Y- \wh{\Pi} Y) +\wh{\Pi} Y }{(AA^T)^{\dagger 3} Y  }
} \\
& = \dfrac{\scal{A(A^TA)^{-3}
      \widehat{X}}{A \widehat{X} +  \widehat{\eta}  } }{ \scal{A(A^TA)^{-3}(\widehat{X} + A^T\widehat{\eta})   }{A \widehat{X} +  \widehat{\eta}  }    } \\
& = 1- \dfrac{\scal{(AA^T)^{\dagger 2}
      \widehat{\eta}}{A \widehat{X} +  \widehat{\eta}  } }{
    \scal{A(A^TA)^{-3}(\widehat{X} + A^T\widehat{\eta})   }{A
      \widehat{X} +\widehat{\eta}  }    } \\
& =  \frac{ \sum_{i=1}^d \sigma_i^{-5}  \widehat \alpha_i}
{ \sum_{i=1}^d  \sigma_i^{-5} \widehat \alpha_i (\sigma_i \widehat \nu_i
  \widehat \xi_i^{-1}+1)} \\
& =  1- \frac{ \sum_{i=1}^d  \sigma_i^{-4} \widehat{\alpha_i}\widehat{\nu}_i}{
  \sum_{i=1}^d  \sigma_i^{-5} \widehat \alpha_i(\sigma_i \widehat \nu_i
  \widehat \xi_i^{-1}+1)} .
 \end{alignat*}
In Figure \ref{fig3}, we present the comparison between optimal parameters $t^*$ and their approximations $\wh{t}^{\text{lin}}$. Despite the fact that the gap between $\sigma_d$ and $1- t^*$ is not as large as requested in \eqref{eq:50}, the agreement between $t^*$  and   $\wh{t}^{\text{lin}}$ keeps rather satisfactory.  In  Figure \ref{fig0stat}, we report the empirical distributions of the parameters, showing essentially their agreement.
 \begin{figure}[h!]
  \centering
    \includegraphics[width=0.6\textwidth]{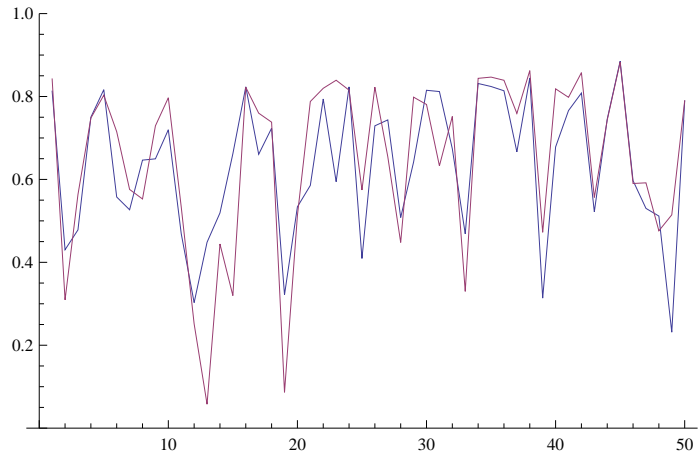}
  \caption{Optimal parameters $t^*$ and corresponding approximations $\wh t$ {\color{black} ($n=1000$)}  for $50$ different data $Y =A X +\eta$ for $X$ and $\eta$ generated as for the experiment of Figure \ref{fig2}. Here we considered a noise level $\sigma=0.006$, so that the optimal parameter $t^*$ can be very close to $0.5$ and the minimal singular value of $A$ is $\sigma_d \approx 0.7$. }
\label{fig3}
\end{figure}
\begin{figure}[h!]
  \centering
    \includegraphics[width=0.4\textwidth]{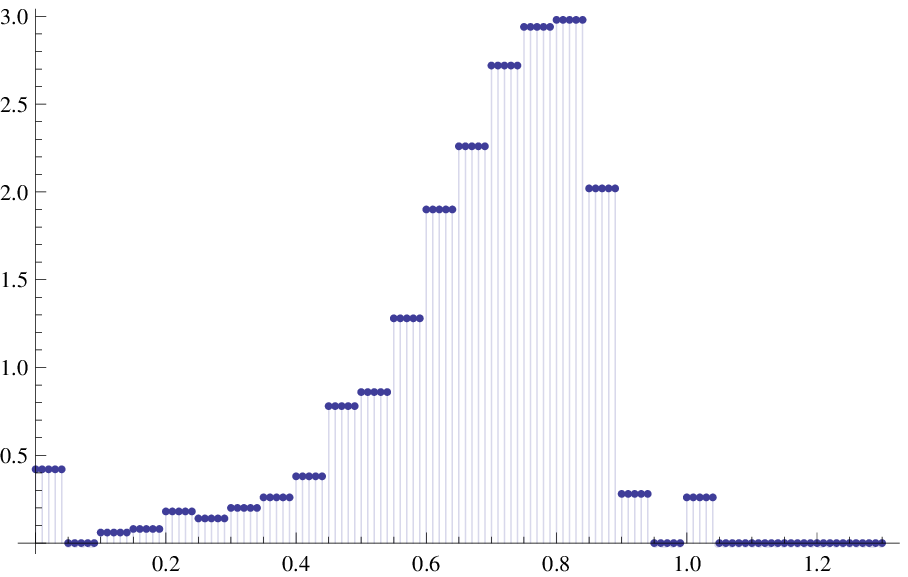}  \includegraphics[width=0.4\textwidth]{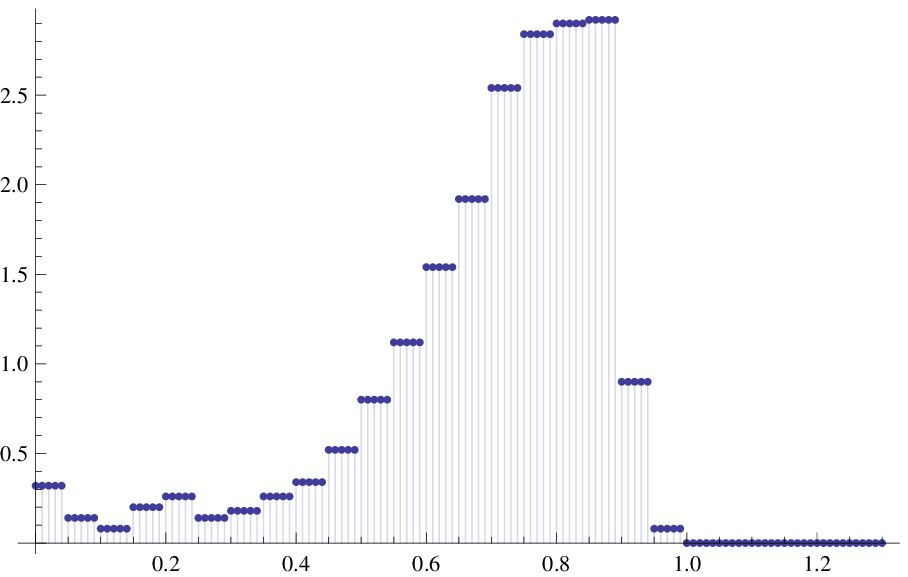}
  \caption{Empirical distribution of the optimal parameters $t^*$ (left) and the corresponding empirical distribution of the approximating parameters $\wh{t}^{\text{lin}}$ (right) for $1000$ randomly generated
data $Y = A X +\eta$ with the same higher noise level. The statistical accordance of the two parameters $t^*$ and $\wh{t}^{\text{lin}}$ is shown.}
\label{fig0stat}
\end{figure}

\section{Conclusions and a glimps to future directions}\label{sec:discussion}

Motivated by the challenge of the regularization and parameter choice/learning relevant for many inverse problems in real-life, in this paper we presented a method to determine the parameter,  based on the usage of a supervised machine learning framework. Under the assumption that the solution of the inverse problem is distributed sub-gaussianly over a small dimensional linear subspace $\mathcal V$ and the noise is also sub-gaussian, we provided a rigorous theoretical justification for the learning procedure of the function, which maps given noisy data into an optimal Tikhonov regularization parameter. We also presented and discussed explicit bounds for special cases and provided techniques for the practical implementation of the method. 
\\

{
Classical empirical computations of  Tikhonov regularization  parameters, for instance the {\it discrepancy principle}, see, e.g., \citep{zbMATH05929140,zbMATH00936298},  may be assuming the knowledge of the noise level $\sigma$ and use regularized Tikhonov solutions for different choices of $t$
\begin{align*}
Z^t & =  t (t A^T A + (1-t)  I)^{-1} A^T Y,
\end{align*}
to return some parameter $\widehat t_\sigma$ which ideally provides $\sigma$-optimal asymptotic behavior of $Z^{\widehat t_\sigma}$ for noise level $\sigma \to 0$. To be a bit more concrete, by using either bisection-type methods or by fixing a pretermined grid of points $\mathcal T=\{ t^{(j)}: j=1,2,\dots\}$, one searches some $t^{(\bar j)} \in \mathcal T$ for which $\| A Z^{t^{(\bar j)} }- Y \| \approx \sigma$ and set $\widehat t_\sigma := t^{(\bar j)}$.
For each $t=t^{(j)}$ the computation of  $Z^t$ has cost of order $\mathcal O(d^2)$ as soon as one has precomputed the SVD of $A$. Most of the empirical estimators require then the  computation of $N_{it}$ instances of $Z^t$ for a total of $\mathcal O(N_{it} d^2)$ operations to return $\widehat t_\sigma$.
\\

Our approach does not assume knowledge of $\sigma$ and it fits into the class of heuristic parameter choice rules, see \citep{zbMATH06288322}; it
requires first to have precomputed the empirical projection $\widehat \Pi_n$ by computation of {\it one single} $h$-truncated  SVD\footnote{One needs to compute $h$ singular values and singular vectors up to the first significant spectral gap.} of the empirical covariance matrix $\widehat \Sigma_n$ of dimension $m \times m$, where $m$ is assumed to be significantly smaller than $d$. In fact, the complexity of methods for computing a truncated SVD out of standard books is $\mathcal O(h m^2)$. When one uses randomized algorithms one could reduce the complexity even to $\mathcal O(\log(h) m^2)$, see  \citep{zbMATH06005534}. Then one needs to solve the optimization problem.
\begin{equation}\label{optparam2}
\min_{t\in [0,1]} \nor{Z^t-\widehat X}^2.
\end{equation}
This optimization is equivalent to finding numerically  roots in $(0,1)$ of the function $\widehat H_n(t)$, which can be performed in the general case and for $d>2$ only by  iterative algorithms. They also require sequential evaluations of $Z^t$ of individual cost $\mathcal O(d^2)$, but they converge usually very fast and they are guaranteed by our results to return $Z^{\widehat t} \approx Z^{t^*}$. If we denote $N_{it}^\circ$ the number of iterations needed for the root finding with appropriate accuracy, we obtain a total complexity of $\mathcal O(N_{it}^\circ d^2)$. In the general case, our approach is going to be more advantageous with respect to classical methods if $N_{it}^\circ \ll N_{it}$. However, 
in the special case where  the noise level is relatively small compared to the minimal positive singular value of $A$, e.g., $\sigma_d \gg \sigma$, then our arguments in Section 5 suggest that one can approximate the relevant root of $\widehat H_n(t)$ and hence $t^*$ with the  smaller cost of $\mathcal O(d^2)$ and no need of several evaluations of $Z^t$. As a byproduct of our results we also showed that 
\begin{equation}\label{hatX}
\widehat X = A^\dagger \widehat \Pi_n Y
\end{equation}
is also a good estimator and this one requires actually only the computation of $ \widehat \Pi_n$ and then the execution of the operations in formula \eqref{hatX} of cost $\mathcal O(m d)$ (if we assumed the precomputation of the SVD of $A$).}\\

Our current efforts are devoted to  the extension of the analysis to the case where the underlying space $\mathcal V$ is actually a smooth lower-dimensional {\it nonlinear} manifold. This extension will be realized by firstly approximating the nonlinear manifold locally on a proper decomposition by means of affine spaces as proposed in \citep{zbMATH06454789} and then applying our presented results on those local {\it linear} approximations.

Another interesting future direction consists of extending the approach to sets $\mathcal V$, unions of linear subspaces as in the case of solutions expressible sparsely the respect to certain dictionaries. In this situation, one would need to consider different regularization techniques and possibly non-convex non-smooth penalty quasi-norms.  

For the sake of providing a first glimps  on the feasibility of the latter possible extension, we consider below the problem of image denoising. In particular, as a simple example of the image denoising algorithm, we consider the wavelet shrinkage \citep{DoJo94}: given a noisy image $Y = X +   \sigma W$ (already expressed in wavelet coordinates), where $\sigma$ is the level of Gaussian noise, the denoised image is obtained by 
$$
 Z^\alpha = \mathbb S_\alpha(X) = \arg\min_{Z} \| Z - Y\|^2 + 2 \alpha \| Z\|_{\ell_1},
$$
where $\| \cdot \|_{\ell_1}$ denotes the $\ell_1$ norm, which promotes a sparse representation of the image with respect to a wavelet decomposition.
Here, as earlier, we are interested in learning the high-dimensional function  mapping  noisy images $X$ into their optimal shrinkage parameters, \emph{i.e.,} an optimal solution of $\| Z^\alpha -  X \|^2 \rightarrow \min_\alpha$. 

Employing a properly modified version of the procedure described in this paper, we are obtaining very exciting and promising results, in particular that  the optimal shrinkage parameter $\alpha$ essentially depends nonlinearly on very few (actually 1 or 2) linear evaluations of $Y$. This is not a new observation and it is a data-driven verification of the well-known results of \citep{DoJo94} and \citep{MR1669536}, establishing that the optimal parameter depends essentially on two meta-features of the noisy image, \emph{i.e.,} the noise level and its Besov regularity. In Figure \ref{fig0num} and Figure \ref{fig1num} we present the numerical results for wavelet shrinkage, which show that our approach chooses a nearly optimal parameter in terms of peak signal-to-noise ratio (PSNR) and visual quality  of the denoising.

\begin{figure}[h!]
  \centering
  {\includegraphics[width=1.1\textwidth]{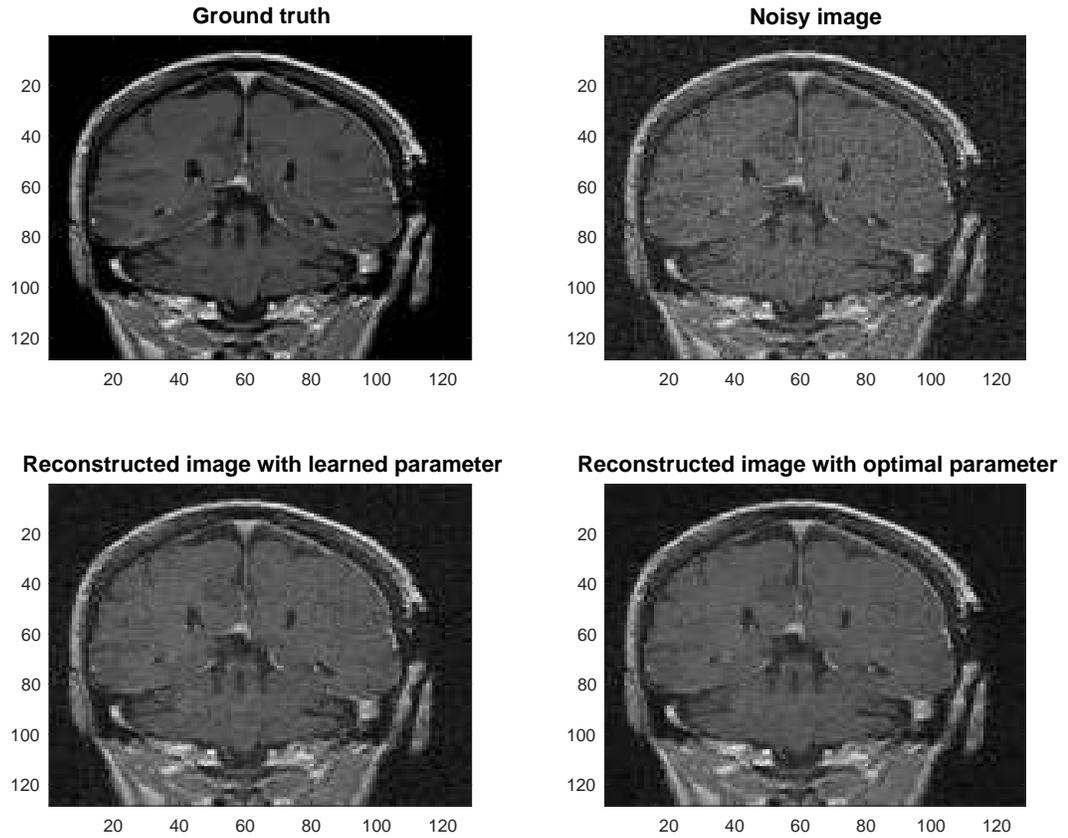}}
  \caption{\red{Numerical Experiments for Wavelet Shrinkage.}}
\label{fig0num}
\end{figure}

\begin{figure}[h!]
  \centering
  {\includegraphics[width=0.9\textwidth]{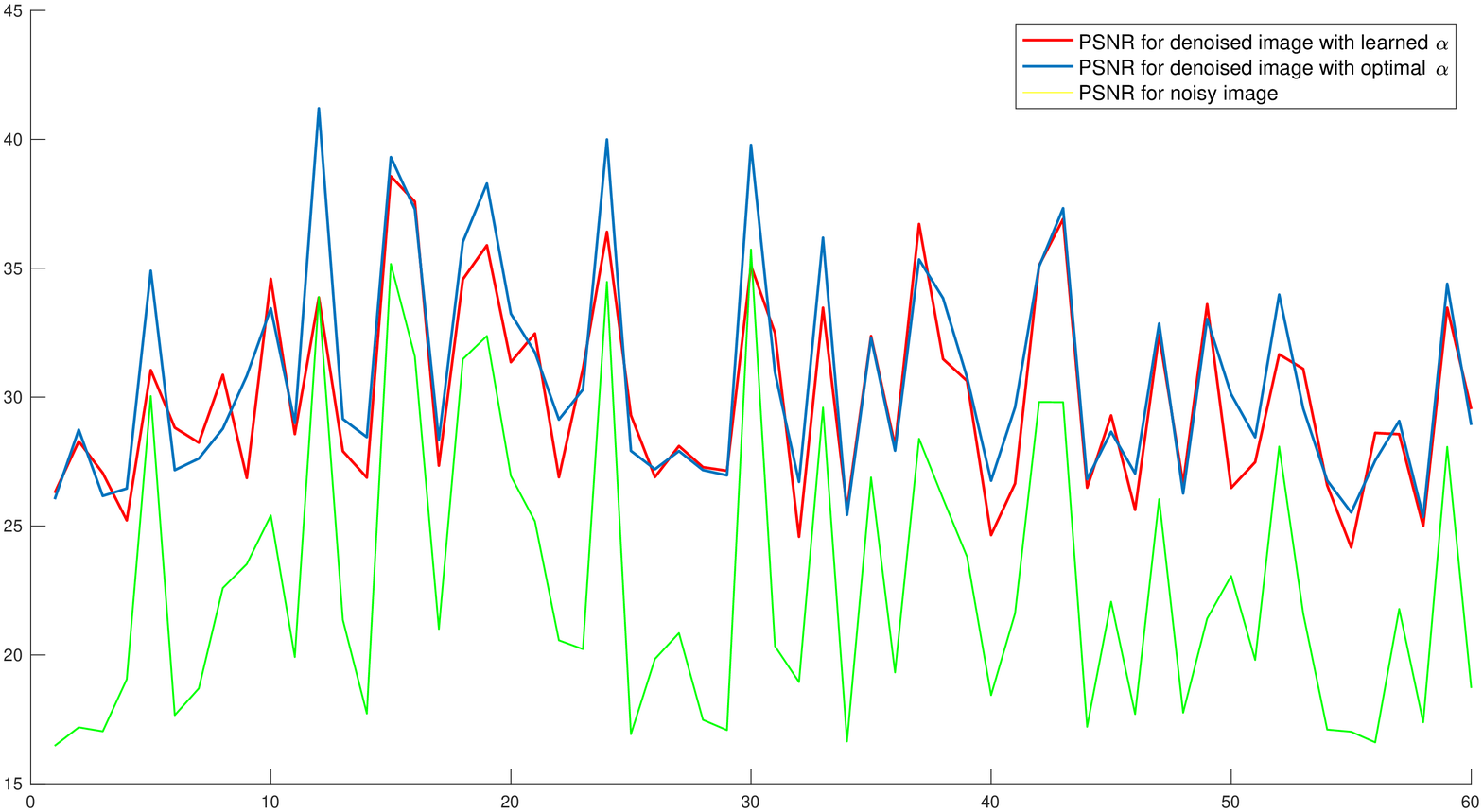}}
  \caption{\red{PSNR between the ground truth image and its noisy version (green line); the ground truth and its denoised version with the optimal parameter (blue line) and the learned parameter (red line). The results are presented for 60 random images.}}
\label{fig1num}
\end{figure}

\appendix

\acks{E. De Vito is a member of the Gruppo Nazionale per l'Analisi
  Matematica, la Probabilit\`a e le loro Applicazioni (GNAMPA) of the
  Istituto Nazionale di Alta Matematica (INdAM). M. Fornasier
  acknowledges the financial support of the ERC-Starting Grant
  HDSPCONTR ``High-Dimensional Sparse Optimal Control''. V. Naumova
  acknowledges the support of project ``Function-driven Data Learning
  in High Dimension'' (FunDaHD) funded by the Research Council of Norway.} 



\appendix

\section{Perturbation result for compact operators}
We recall the following perturbation result for compact operators in Hilbert
spaces \citep{ans71} and  \citep[Theorem~3]{zwabla06}, whose proof also
holds without the assumption that the spectrum is simple \citep[see][Theorem~20]{robede10}.
\begin{proposition}\label{propro}
Let  $\mathcal A$  and $\mathcal B$ be two compact positive operators on a Hilbert space $\hh$ and
denote by $(\alpha_j)_{j=1}^N$ and $(\beta_\red{\ell})_{\red{\ell}=1}^L$ the
corresponding families of
(distinct) strictly positive eigenvalues of $\mathcal A$ and $\mathcal B$ ordered in a decreasing way.  
For all $1\leq j\leq N$, denote by $P_j$ (resp. $Q_\red{\ell}$ with $1\leq \red{\ell}\leq L$) the projection onto the vector space
spanned by the eigenvectors of $\mathcal A$ (resp. $\mathcal B$) whose eigenvalues are
greater or equal than $\alpha_j$ (respect. $\beta_\red{\ell}$). Let $j\leq N$ such that $\nor{\mathcal A-\mathcal B}<\frac{\alpha_j- \alpha_{j+1}}{4}$, then  
there exists $\red{\ell}\leq L$ so that
\begin{align}
& \beta_{\red{\ell}+1}< \frac{\alpha_j+ \alpha_{j+1}}{2} <\beta_\red{\ell}\nonumber\\  
 & \nor{Q_\red{\ell}- P_{\red{j}}} \le \frac{2}{\alpha_j- \alpha_{j+1}}
 \nor{\mathcal A-\mathcal B}\label{eq:A1}\\
& \dim{Q_\red{\ell}\hh}=\dim{P_j\hh} .\nonumber
\end{align}
If $\mathcal A$ and $\mathcal B$ are Hilbert-Schmidt,
the operator norm in the above bound  can be replaced by the
Hilbert-Schmidt norm.
\end{proposition}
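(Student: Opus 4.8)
The plan is to use that a positive operator on a Hilbert space is self-adjoint, so that both $\mathcal A$ and $\mathcal B$ are accessible through the spectral calculus, with real nonnegative eigenvalues accumulating only at $0$. Write $\delta=\alpha_j-\alpha_{j+1}$ and $\gamma=\tfrac12(\alpha_j+\alpha_{j+1})$, so the hypothesis reads $\nor{\mathcal A-\mathcal B}<\delta/4$ (with the convention $\alpha_{N+1}=0$ when $j=N$). First I would establish the spectral gap of $\mathcal B$ and the dimension claim: ordering the eigenvalues of $\mathcal A$ and of $\mathcal B$ \emph{with multiplicity} (and padding with zeros) as $\mu_1\geq\mu_2\geq\cdots$ and $\nu_1\geq\nu_2\geq\cdots$, the Courant--Fischer min--max principle gives $\abs{\mu_k-\nu_k}\leq\nor{\mathcal A-\mathcal B}<\delta/4$ for every $k$. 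Letting $m=\dim{\ran{P_j}}$ be the total multiplicity of $\alpha_1,\ldots,\alpha_j$, so that $\mu_m=\alpha_j$ and $\mu_{m+1}=\alpha_{j+1}$, this yields $\nu_m>\alpha_j-\delta/4>\gamma>\alpha_{j+1}+\delta/4>\nu_{m+1}$. Hence $\mathcal B$ has exactly $m$ eigenvalues above $\gamma$; choosing $\ell$ so that $\beta_\ell$ is the smallest \emph{distinct} eigenvalue of $\mathcal B$ exceeding $\gamma$ gives $\beta_{\ell+1}<\gamma<\beta_\ell$, identifies $Q_\ell$ with the spectral projection of $\mathcal B$ onto $(\gamma,+\infty)$, and shows $\dim{\ran{Q_\ell}}=m=\dim{\ran{P_j}}$.

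For the norm estimate, set $P=P_j$, $Q=Q_\ell$ and reduce to two ``cross'' terms via the identity $\nor{P-Q}=\max\{\nor{(I-P)Q},\,\nor{(I-Q)P}\}$, which follows from $(P-Q)^2=P(I-Q)P+(I-P)Q(I-P)$ together with the orthogonality of the ranges of the two positive summands. I would then bound $\nor{(I-P)Q}$ by a Sylvester (Davis--Kahan $\sin\Theta$) argument: since $P$ commutes with $\mathcal A$ and $Q$ with $\mathcal B$, putting $S=(I-P)Q$ and $T=(I-P)(\mathcal A-\mathcal B)Q$ gives $\widetilde{\mathcal A}S-S\widetilde{\mathcal B}=T$, where $\widetilde{\mathcal A}=(I-P)\mathcal A(I-P)$ has spectrum in $[0,\alpha_{j+1}]$ and $\widetilde{\mathcal B}=Q\mathcal B Q$ has spectrum in $[\beta_\ell,\beta_1]\subset(\gamma,+\infty)$. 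These spectra are separated by at least $\beta_\ell-\alpha_{j+1}>\gamma-\alpha_{j+1}=\delta/2$; shifting both blocks by the midpoint of the gap to $\widetilde{\mathcal A}'$ (negative spectrum) and $\widetilde{\mathcal B}'$ (positive spectrum) and using the integral representation $S=-\int_0^{\infty}e^{t\widetilde{\mathcal A}'}\,T\,e^{-t\widetilde{\mathcal B}'}\,dt$ yields $\nor{S}\leq (\delta/2)^{-1}\nor{T}\leq\frac{2}{\delta}\nor{\mathcal A-\mathcal B}$. The symmetric computation for $(I-Q)P$, for which the relevant spectra lie in $[\alpha_j,\alpha_1]$ (for $P\mathcal A P$ on $\ran{P}$) and in $[0,\beta_{\ell+1}]$ (for $(I-Q)\mathcal B(I-Q)$ on $\ran{(I-Q)}$), separated by at least $\alpha_j-\beta_{\ell+1}>\alpha_j-\gamma=\delta/2$, gives $\nor{(I-Q)P}\leq\frac{2}{\delta}\nor{\mathcal A-\mathcal B}$. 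Combining the two, $\nor{P-Q}\leq\frac{2}{\alpha_j-\alpha_{j+1}}\nor{\mathcal A-\mathcal B}$, which is \eqref{eq:A1}. The Hilbert--Schmidt version comes for free, since $\nor{e^{t\widetilde{\mathcal A}'}Te^{-t\widetilde{\mathcal B}'}}_{\mathrm{HS}}\leq\nor{e^{t\widetilde{\mathcal A}'}}\,\nor{T}_{\mathrm{HS}}\,\nor{e^{-t\widetilde{\mathcal B}'}}$, so the same integral estimate runs verbatim with the Hilbert--Schmidt norm in place of the operator norm.

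The main obstacle I anticipate is not analytic but combinatorial bookkeeping: in Step 1 one must carefully match the \emph{distinct} eigenvalues $\alpha_j$, $\beta_\ell$ against the \emph{with-multiplicity} sequences and keep $0$ (which may lie in the spectrum of a non-injective $\mathcal A$ or $\mathcal B$ but is absent from the listed families $(\alpha_j)$, $(\beta_\ell)$) out of the way; once the gap of $\mathcal B$ around $\gamma$ and the dimension identity are in place, the rest is routine. As an alternative to the Sylvester step, one may use the Riesz projection representation $P-Q=\frac{1}{2\pi i}\oint_\Gamma (z-\mathcal A)^{-1}(\mathcal A-\mathcal B)(z-\mathcal B)^{-1}\,dz$ over a contour $\Gamma$ separating $(\gamma,+\infty)$ from $[0,\gamma]$ in the spectra of both operators; this is closer to the cited references, but with a naive contour it produces a constant worse than the sharp factor $2$, so I would favour the self-adjoint argument above.
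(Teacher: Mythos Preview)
The paper does not supply its own proof of this proposition; it merely recalls the result with citations to Anselone (1971), Zwald--Blanchard (2006, Theorem~3), and Rosasco--Belkin--De~Vito (2010, Theorem~20). Your argument is correct and self-contained, and you already anticipate the comparison in your closing paragraph: the cited references obtain the projection bound via the Riesz contour-integral representation of the spectral projections, whereas you take the Davis--Kahan $\sin\Theta$ route through a Sylvester equation. For self-adjoint operators your approach is cleaner and delivers the constant $2$ directly, while the contour approach needs some care with the choice of contour to match that constant.

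Two small remarks on the write-up. First, when you say $\widetilde{\mathcal B}=Q\mathcal BQ$ has spectrum in $[\beta_\ell,\beta_1]$, this is literally true only as an operator on $\ran Q$; on all of $\hh$ it also has $0$ in its spectrum. This is harmless because $S=(I-P)Q$ and $T=(I-P)(\mathcal A-\mathcal B)Q$ vanish on $\ran(I-Q)$, so the integral representation can be read as a map $\ran Q\to\ran(I-P)$; it is worth saying so explicitly. Second, the Hilbert--Schmidt extension does not quite run ``verbatim'': the reduction $\nor{P-Q}=\max\{\nor{(I-P)Q},\nor{(I-Q)P}\}$ is specific to the operator norm, while in Hilbert--Schmidt one has $\nor{P-Q}_{\mathrm{HS}}^2=\nor{(I-P)Q}_{\mathrm{HS}}^2+\nor{(I-Q)P}_{\mathrm{HS}}^2$, which with the gap $\delta/2$ yields the constant $2\sqrt{2}$ rather than $2$. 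The fix is already in your Weyl step: you in fact established $\beta_\ell>\alpha_j-\delta/4$ and $\beta_{\ell+1}<\alpha_{j+1}+\delta/4$, so both spectral separations are at least $3\delta/4$, and then $\sqrt{2}\cdot\tfrac{4}{3\delta}=\tfrac{4\sqrt{2}}{3\delta}<\tfrac{2}{\delta}$ recovers the stated constant.
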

In the above proposition, if $N$ or $M$ are finite, $\alpha_{N+1}=0$
or  $\beta_{L+1}=0$. \red{We note that, if the eigenvalues of $A$ or
  $B$ are not simple, in  general $\ell\neq j$. However, the above
  result shows that, given $j=1,\ldots,N$ there  exists a unique
  $\ell=1,\ldots,L$ such that  $\dim{Q_\red{\ell}\hh}=\dim{P_j\hh}$
  and $\beta_\ell$ is the smallest eigenvalue of $B$ greater than $(\alpha_j+\alpha_{j+1})/2$.}

\section{Sub-gaussian vectors}\label{sec:sub-gaussian-vectors}

  We recall some facts about sub-gaussian random vectors and we follow
  the presentation in \citep{ver12}, which provides the proofs of the
  main results in Lemma 5.5. 

\begin{subequations}
  \begin{proposition}
 Let $\xi$ be a sub-gaussian random vector in $\R^d$. \red{Then, for all
 $\tau>0$ and $v\in\R^d$}
\begin{equation}
  \label{eq:A3}
  \mathbb P[\abs{\scal{\xi}{v}}>3 \nor{\xi}_{\psi_2}\nor{v} \tau]\leq 2
    \exp(-\tau^2).
\end{equation}
Under the further assumption that $\xi$ is centered,  then \red{
\begin{alignat}{1}
    \label{eq:A4}
    \mathbb E[\exp(\tau\scal{\xi}{v})] & \leq
    \exp\left(8 \tau^2\nor{\xi}^2_{\psi_2}\right) \\
  \label{eq:A3c}
  \mathbb P[\scal{\xi}{v}>4\sqrt{2} \nor{\xi}_{\psi_2}\nor{v} \tau]& \leq 
    \exp(-\tau^2).
\end{alignat}}
  \end{proposition}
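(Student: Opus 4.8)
The plan is to reduce all three estimates to one–dimensional statements and then run the classical moment/MGF arguments for scalar sub-gaussian variables, following \citet{ver12}. Since each inequality is homogeneous in $v$, it suffices to treat $\nor{v}=1$; set $Z=\scal{\xi}{v}$. Directly from the definition~\eqref{eq:A2a} of the sub-gaussian norm we have the moment bound
\[
\mathbb E[\abs{Z}^{q}]^{1/q}\leq \nor{\xi}_{\psi_2}\sqrt{q}\qquad\text{for all }q\geq 1,
\]
and all three displayed estimates will be derived from this single inequality.

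For~\eqref{eq:A3} I would apply Markov's inequality to $\abs{Z}^{q}$: for every $q\geq 1$ and $s>0$,
\[
\mathbb P[\abs{Z}>s]\leq s^{-q}\,\mathbb E[\abs{Z}^{q}]\leq \Bigl(\tfrac{\nor{\xi}_{\psi_2}\sqrt{q}}{s}\Bigr)^{q}.
\]
Taking $s=3\nor{\xi}_{\psi_2}\tau$ and, for $\tau\geq 1$, the choice $q=\tau^{2}\geq 1$ gives $(\sqrt{q}/(3\tau))^{q}=3^{-\tau^{2}}\leq \mathrm e^{-\tau^{2}}$, since $\ln 3>1$. For $0<\tau<1$ the bound is trivial as soon as $\tau^{2}\leq\ln 2$ (then $2\mathrm e^{-\tau^{2}}\geq 1$), and the remaining range $\ln 2<\tau^{2}<1$ is covered by the same Markov estimate with $q=1$ together with $\mathbb E[\abs Z]\leq\nor{\xi}_{\psi_2}$. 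This proves~\eqref{eq:A3}, with the constant $2$ to spare.

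For~\eqref{eq:A4} I would use that $\xi$ is centered, so $\mathbb E[Z]=0$, and expand the moment generating function (the interchange of sum and expectation being justified by the moment bounds above):
\[
\mathbb E[\mathrm e^{\tau Z}]=1+\sum_{q\geq 2}\frac{\tau^{q}\,\mathbb E[Z^{q}]}{q!}\leq 1+\sum_{q\geq 2}\frac{\abs{\tau}^{q}\,\nor{\xi}_{\psi_2}^{q}\,q^{q/2}}{q!}.
\]
Using $q!\geq 2^{\lfloor q/2\rfloor}\lfloor q/2\rfloor!$ and $k!\geq (k/\mathrm e)^{k}$ one bounds the tail series by a geometric-type majorant; combining with $1+x\leq\mathrm e^{x}$ and keeping track of the absolute constants gives $\mathbb E[\mathrm e^{\tau Z}]\leq \exp(8\tau^{2}\nor{\xi}_{\psi_2}^{2})$, the value $8$ being far from optimal but comfortable. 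Finally,~\eqref{eq:A3c} follows by the Cramér–Chernoff device: for every $\lambda>0$,
\[
\mathbb P[Z>s]\leq \mathrm e^{-\lambda s}\,\mathbb E[\mathrm e^{\lambda Z}]\leq \mathrm e^{-\lambda s+8\lambda^{2}\nor{\xi}_{\psi_2}^{2}},
\]
and minimizing the exponent over $\lambda$ (optimum at $\lambda=s/(16\nor{\xi}_{\psi_2}^{2})$) yields $\mathbb P[Z>s]\leq \mathrm e^{-s^{2}/(32\nor{\xi}_{\psi_2}^{2})}$; the choice $s=4\sqrt{2}\,\nor{\xi}_{\psi_2}\tau$ makes the exponent exactly $-\tau^{2}$, which is~\eqref{eq:A3c} after restoring the factor $\nor{v}$ by homogeneity.

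The individual steps are entirely classical, so there is no genuine obstacle here; the only mildly delicate points are the bookkeeping of absolute constants — in particular arranging the moment series in~\eqref{eq:A4} to close under the explicit constant $8$ — and disposing of the small-$\tau$ range in~\eqref{eq:A3}, where the moment order $q$ is forced to be $\geq 1$ and one cannot simply set $q=\tau^{2}$. Both are routine.
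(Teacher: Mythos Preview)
Your overall plan is the standard one and matches the paper's in spirit: reduce to a scalar $Z=\scal{\xi}{v}$, use the moment growth $\mathbb E[|Z|^q]\le q^{q/2}\nor{\xi}_{\psi_2}^q$, and derive the three displayed bounds. Part~\eqref{eq:A3c} is handled exactly as in the paper (Chernoff plus optimization), so there is nothing to add there.

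For~\eqref{eq:A3} your route genuinely differs from the paper's. The paper applies Markov's inequality to $\exp(c|Z|^2)$, bounds $\mathbb E[\exp(c|Z|^2)]$ by expanding the exponential and using $k!\ge \mathrm e(k/\mathrm e)^k$, and then tunes $c=1/9$. Your argument---Markov on $|Z|^q$ with $q=\tau^2$, plus a separate treatment of the small-$\tau$ range---is more elementary and avoids the exponential moment altogether; the price is the somewhat fiddly case analysis for $\tau<1$, which you do handle correctly. Both approaches are fine.

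The one place where your sketch does not close is~\eqref{eq:A4}. After the expansion you propose the factorial bound $q!\ge 2^{\lfloor q/2\rfloor}\lfloor q/2\rfloor!$ together with $k!\ge (k/\mathrm e)^k$; but feeding these into, say, the even terms $q=2k$ gives
\[
\frac{|\tau|^{2k}(2k)^{k}}{(2k)!}\;\le\;\frac{|\tau|^{2k}(2k)^{k}}{2^{k}k!}\;\le\;(e|\tau|^{2})^{k},
\]
a geometric majorant that diverges once $|\tau|^{2}\ge 1/\mathrm e$. So this bound alone cannot yield an inequality valid for all $\tau$, let alone the explicit constant $8$. The paper (following \citet{ver12}) circumvents this by comparing the MGF series term-by-term with the Taylor series of $\exp(C^{2}\tau^{2})$ and splitting into the regimes $|\tau|\le 1$ and $|\tau|\ge 1$; in the latter case one pairs each odd term of the MGF series with the next even one before comparing. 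That case split is the missing ingredient in your sketch: once you insert it (or, equivalently, once you use the sub-gaussian bound on $\mathbb E[\exp(cZ^{2})]$ to handle large $|\tau|$), the argument goes through with constant $\mathrm e^{2}<8$. So the gap is not conceptual, but the ``routine bookkeeping'' you allude to is a little more than what you wrote.
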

\end{subequations}
  \begin{proof}
 We  follow  the idea in \citet[Lemma 5.5]{ver12} of explicitly
 computing the constants. By rescaling $\xi$ to
 $\xi/\nor{\xi}_{\psi_2}$, we can assume that
 $\nor{\xi}_{\psi_2}=1$. 

Let $c>0$ be a small constant to be fixed.
Given, $v\in S^{d-1}$,  set $\chi=\scal{\xi}{v}$,
which is a real sub-gaussian vector. By Markov inequality,
\begin{align*}
  \mathbb P[\abs{\chi}>\tau] & = \mathbb
  P[c\abs{\chi}^2>c\tau^2] = \mathbb
  P[\exp(c\abs{\chi}^2)>\exp(c\tau^2)] \\
&  \leq  \mathbb E[ \exp(c\abs{\chi}^2)] {\mathrm e}^{-c \tau^2}.
\end{align*}
\red{ By~\eqref{eq:A2a}, we get that $\mathbb E[ \abs{\chi}^q]\leq
  q^{q/2}$, so that} 
\begin{align*}
\mathbb E[ \exp(c\abs{\chi}^2)] = 1 + \sum_{k=1}^{+\infty}
  \frac{c^k}{k!} \mathbb E[\abs{\chi}^{2k}] \leq  1 +
  \sum_{k=1}^{+\infty}\frac{(2ck)^k}{k!} \leq 1 + \frac{1}{\mathrm{e}}   \sum_{k=1}^{+\infty} (2c e)^k= 1 +
  \frac{2c}{1-2c \mathrm{e}}, 
\end{align*}
where we use the estimate $k!\geq \mathrm{e} (k/\mathrm{e})^k$ for
$k\ge 1$. Setting $c=1/9$,  $1 + \frac{2c}{1-2c \mathrm{e}} <2$, so that
\[
\mathbb P[\abs{\chi}>3\tau]\leq 2 \exp(-\tau^2),
\] 
\red{ so that~\eqref{eq:A3} is proven.

Assume now that $\mathbb E[\xi]=0$. By~(5.8) in \citet{ver12}
  \begin{equation}
\mathbb E[\exp(\frac{\tau}{\mathrm e}\chi)] \leq 1 +
\sum_{k=2}^{+\infty}
\left(\frac{\abs{\tau}}{\sqrt{k}}\right)^k,\label{eq:16}
\end{equation}
and, by~(5.9) in \citet{ver12},

  \begin{equation}
\exp\left(\tau^2C^2\right)\geq 1 + \sum_{h=1}^{+\infty}
\left(\frac{C\abs{\tau}}{\sqrt{h}}\right)^{2h}.\label{eq:18}
\end{equation}
If $\abs{\tau}\leq 1$, fix an even $k\geq 2$ so that $k=2h$ with
$h\geq 1$. The $k$-th and
$(k+1)$-th terms of the series~\eqref{eq:16} is 
\[
\left(\frac{\abs{\tau}}{\sqrt{k}}\right)^k +
\left(\frac{\abs{\tau}}{\sqrt{k+1}}\right)^{k+1}\leq 2
\left(\frac{\abs{\tau}}{\sqrt{k}}\right)^k = \frac{2}{ C^{2h} 2^h}
\left(\frac{\abs{\tau}}{\sqrt{2h}}\right)^{2h} \leq
\left(\frac{C\abs{\tau}}{\sqrt{k}}\right)^k,
\]  
where the right hand side is the $h$-th term of the
series~\eqref{eq:18} and the last inequality holds true if $C\geq
1$. Under this assumption
  \begin{equation}
\mathbb E[\exp(\frac{\tau}{\mathrm e}\chi)] \leq
\exp\left(\tau^2C^2\right)\qquad |\tau|\leq 1.\label{eq:19}
\end{equation}
If $\abs{\tau}\geq 1$, fix an odd $k\geq 3$, so that $k=2h-1$ with
$h\geq 2$, then the $k$-th and $(k+1)$-th terms of the
series~\eqref{eq:16} is 
\[
\left(\frac{\abs{\tau}}{\sqrt{k}}\right)^k +
\left(\frac{\abs{\tau}}{\sqrt{k+1}}\right)^{k+1}\leq 2
\left(\frac{\abs{\tau}}{\sqrt{k}}\right)^{k+1} =   \left(\frac{\sqrt{2}h}{C^2(2h-1)}  \right)^{h}
\left(\frac{\abs{\tau}}{\sqrt{h}}\right)^{2h}\leq \left(\frac{\abs{\tau}}{\sqrt{h}}\right)^{2h},
\] 
where the right hand side is the $h$-th term of the
series~\eqref{eq:18} and the inequality holds true provided that
\[
 C^2 \geq \sup_{h\geq 2} \frac{\sqrt{2}h}{(2h-1)} = \frac{2\sqrt{2}}{3},
\]
which holds true with the choice $C=1$. Furthermore, the term
of~\eqref{eq:16} with $k=2$ is clearly bounded by  term
of~\eqref{eq:18} with $h=1$, so that we get
\[
\mathbb E[\exp(\frac{\tau}{\mathrm e}\chi)] \leq
\exp\left(\tau^2\right)\qquad |\tau|\geq 1.
\]
Together  with bound~\eqref{eq:19} with $C=1$, the abound bound
gives~\eqref{eq:A4} since $e^2<8$.

Finally, reasoning as in the proof of~\eqref{eq:A3} and by using~\eqref{eq:A4}
\begin{align*}
  \mathbb P[\chi>\tau] & =   P[\exp(c\chi)>\exp(c\tau)] \\
&  \leq  \mathbb E[ \exp(c\chi )] {\mathrm e}^{-c \tau} \\
& \leq  \exp(8c^2-c\tau),
\end{align*}
which takes the minimum at $c=\tau/16$. Hence,
\[
\mathbb P[\chi>\tau]  = \exp(-\frac{\tau^2}{32}).
\]
}
  \end{proof}
  \begin{remark}
Both~\eqref{eq:A3} and~\eqref{eq:A4} (for a suitable constants
instead of $\nor{\xi}_{\psi_2}$)    are sufficient conditions for
sub-gaussianity and~\eqref{eq:A4} implies that $\mathbb E[\xi]=0$, see
\cite[Lemma 5.5]{ver12}.
  \end{remark}

The following proposition bounds the Euclidean  norm of a sub-gaussian
vector. The proof is standard and essentially based on the results
in~\citet{ver12},  but we were  not able to find the precise reference. 
The centered case is done in~\citet{rigolet15}.

\begin{proposition}
 Let $\xi$ a sub-gaussian random vector in $\R^d$.  Given $\tau>0$
 with probability greater than $1 -2 \mathrm{e}^{-\tau^2}$
 \begin{equation}
 \nor{\xi} \leq  3\nor{\xi}_{\psi_2}  ( \sqrt{\red{7} d} + 2\tau )\leq 9\nor{\xi}_{\psi_2}  ( \sqrt{ d} + \tau ). \label{eq:56}
 \end{equation}
If $\mathbb E[\xi]=0$, then  with probability greater than $1 - \mathrm{e}^{-\tau^2}$
\begin{equation}
  \label{eq:57}
  \nor{\xi} \leq 8\nor{\xi}_{\psi_2}  ( \sqrt{\red{\frac{7}{2}} d} + \sqrt{2} \tau ) \leq 16\nor{\xi}_{\psi_2}  ( \sqrt{d} + \tau ). 
\end{equation}
\end{proposition}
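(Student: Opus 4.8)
The plan is to use the standard $\epsilon$-net discretization of the Euclidean unit sphere, together with the scalar sub-gaussian tail bounds~\eqref{eq:A3} and~\eqref{eq:A3c} and a union bound. First I would reduce to the case $\nor{\xi}_{\psi_2}=1$ by rescaling $\xi\mapsto\xi/\nor{\xi}_{\psi_2}$; the stated bounds then follow by multiplying back the factor $\nor{\xi}_{\psi_2}$. Next I would recall that for any $\epsilon\in(0,1)$ the sphere $S^{d-1}$ admits an $\epsilon$-net $\mathcal N$ with $\abs{\mathcal N}\le(1+2/\epsilon)^d$ (a volumetric packing argument), and that for any fixed vector $\xi$ one has $\nor{\xi}=\sup_{v\in S^{d-1}}\scal{\xi}{v}\le(1-\epsilon)^{-1}\max_{v\in\mathcal N}\scal{\xi}{v}$, obtained by approximating the maximizing direction by a net point and absorbing the error $\epsilon\nor{\xi}$. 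Choosing $\epsilon=1/2$ gives $\abs{\mathcal N}\le 5^d$ and $\nor{\xi}\le 2\max_{v\in\mathcal N}\scal{\xi}{v}\le 2\max_{v\in\mathcal N}\abs{\scal{\xi}{v}}$.

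For the general case I would apply~\eqref{eq:A3} (with $\nor{v}=1$ and $\nor{\xi}_{\psi_2}=1$) to each of the $5^d$ net points and take a union bound, obtaining $\mathbb P[\max_{v\in\mathcal N}\abs{\scal{\xi}{v}}>3t]\le 2\cdot 5^d\mathrm{e}^{-t^2}$. The key is to pick $t=\sqrt{\tau^2+d\ln 5}$, so that $5^d\mathrm{e}^{-t^2}=\mathrm{e}^{-\tau^2}$; hence with probability at least $1-2\mathrm{e}^{-\tau^2}$ we get $\nor{\xi}\le 6t=6\sqrt{\tau^2+d\ln 5}\le 6(\sqrt{d\ln 5}+\tau)$. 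Using the elementary numerical inequality $\ln 5<7/4$ turns $6\sqrt{d\ln 5}$ into $3\sqrt{7d}$, which yields~\eqref{eq:56}; the second, looser inequality there follows from $3\sqrt{7}<9$.

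For the centered case the only change is to replace~\eqref{eq:A3} by~\eqref{eq:A3c}: no absolute value is needed, since $\nor{\xi}=\sup_{v\in S^{d-1}}\scal{\xi}{v}$ and the net inequality above is already one-sided, so a union bound gives $\mathbb P[\max_{v\in\mathcal N}\scal{\xi}{v}>4\sqrt{2}\,t]\le 5^d\mathrm{e}^{-t^2}$. With the same choice of $t$ and the bound $\nor{\xi}\le 8\sqrt{2}\,t=8\sqrt{2}\sqrt{\tau^2+d\ln 5}\le 8(\sqrt{2d\ln 5}+\sqrt{2}\,\tau)$, together with $2\ln 5<7/2$, one arrives at~\eqref{eq:57}, the looser inequality following from $8\sqrt{7/2}<16$.

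The routine parts are the net construction and the union bound, which carry essentially no difficulty once one invokes the earlier tail estimates. The only point requiring care is the bookkeeping of constants: choosing $t$ precisely so that the factor $5^d$ is absorbed into $\mathrm{e}^{-\tau^2}$, and verifying the two elementary numerical facts $\ln 5<7/4$ and $2\ln 5<7/2$ that convert the logarithms into the displayed $\sqrt{7d}$ and $\sqrt{7d/2}$ terms. I do not expect any genuine obstacle beyond this arithmetic.
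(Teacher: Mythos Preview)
Your proposal is correct and follows essentially the same approach as the paper: rescale to $\nor{\xi}_{\psi_2}=1$, use a $1/2$-net of $S^{d-1}$ of cardinality at most $5^d$ together with the inequality $\nor{\xi}\le 2\max_{v\in\mathcal N}\scal{\xi}{v}$, apply the scalar tail bounds~\eqref{eq:A3} (resp.~\eqref{eq:A3c}) via a union bound, and absorb the factor $5^d$ using $\ln 5<7/4$. The only cosmetic difference is that the paper sets $t=\tau+\sqrt{7d/4}$ directly and checks $t^2-d\ln 5>\tau^2$, whereas you set $t=\sqrt{\tau^2+d\ln 5}$ and then bound it by $\sqrt{7d/4}+\tau$; both routes yield the same constants.
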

\begin{proof}
As usual we assume that $ \nor{\xi}_{\psi_2}=1$.
Let $\mathcal N$ be a $1/2$-net of $S^{d-1}$. Lemmas~5.2 and~5.3  in
\citet{ver12} give
\[
\nor{\xi} \leq 2 \max_{v\in\mathcal N} \scal{\xi}{v} \qquad
\abs{\mathcal N} \leq 5^d.
\]  
Fixed $v\in \mathcal N$,~\eqref{eq:A3} gives that 
\[ \mathbb P[\abs{\scal{\xi}{v}}>3 t ]\leq 2
    \exp(-t^2).\]
By union bound
\begin{align*}
\mathbb P[\nor{\xi} >6 t] & \leq   \mathbb P[\max_{v\in\mathcal N}
                               \abs{\scal{\xi}{v}}>3 t] 
 \leq 2 \abs{\mathcal N} \exp(-t^2) \leq 2 \exp(\red{d\ln 5} -t^2).
\end{align*}
\red{Bound~\eqref{eq:56} follows  with the choice $t=\red{\tau+
    \sqrt{7d/4}}$ taking into account that $t^2-\red{d\ln 5}>\tau^2$
 since $\ln 5<7/4$.}

Assume that $\xi$ is centered and use~\eqref{eq:A3c} instead
of~\eqref{eq:A3}. Then
\begin{align*}
\mathbb P[\nor{\xi} >8\sqrt{2} t] & \leq   \mathbb P[\max_{v\in\mathcal N}
                               \abs{\scal{\xi}{v}}>\red{4\sqrt{2}} t]
 \leq \abs{\mathcal N} \exp(-t^2) \leq \exp(\red{d\ln 5} -t^2).
\end{align*}
\red{As above, the choice  $t=\red{\tau+ \sqrt{7d/4}}$ provides the bound~\eqref{eq:57}.}
\end{proof}
\begin{remark}
Compare with~Theorem~1.19 in~\citet{rigolet15},  noting that by~\eqref{eq:A4}
the parameter $\sigma$ in Definition~1.2
of~\citet{rigolet15}  is bounded by $4\nor{\xi}_{\psi_2}$.
\end{remark}

The following result is a concentration inequality for the second
momentum of sub-gaussian random vector, see Theorem 5.39 and Remark~5.40 in
\citet{ver12} and footnote~20. 
\begin{theorem}\label{thm:concen}
Let $\xi\in\R^d$ be a sub-gaussian vector random vector  in $\R^d$. Given
a family $\xi_1,\ldots,\xi_n$ of random 
vectors independent and identically distributed as $\xi$, then for 
$\tau>0$ 
\begin{equation*}
  \label{eq:A5}
  \mathbb P[\nor{\frac{1}{n}\sum_{i=1}^n \xi_i\otimes\xi_i -\mathbb
    E[\xi \otimes \xi]}>\max\set{\delta,\delta^2}]\leq  2\mathrm{e}^{-\tau^2}
\end{equation*}
where
\[
\delta=C_{\xi} \left(\sqrt{\frac{d}{n}}+ \frac{\tau}{\sqrt{ n}}\right),
\]
and $C_{\xi} $ is a constant depending only on the sub-gaussian norm
$\nor{\xi}_{\psi_2}$.
\end{theorem}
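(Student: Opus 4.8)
The plan is to run the classical covering-number argument for the deviation of a sample second-moment matrix, following \citet[Theorem~5.39 and Remark~5.40]{ver12}; note that neither centering nor isotropy of $\xi$ is needed. By rescaling we may assume $\nor{\xi}_{\psi_2}=1$. Write $\Sigma=\mathbb E[\xi\otimes\xi]$ and $S_n=\frac1n\sum_{i=1}^n\xi_i\otimes\xi_i$, so that $S_n-\Sigma$ is a symmetric matrix and $\nor{S_n-\Sigma}=\sup_{v\in S^{d-1}}\abs{\scal{(S_n-\Sigma)v}{v}}$. By the standard net estimates for symmetric matrices (Lemmas~5.2 and~5.4 in \citet{ver12}) there is a $1/4$-net $\mathcal N$ of $S^{d-1}$ with $\abs{\mathcal N}\leq 9^d$ and
\[
\nor{S_n-\Sigma}\leq 2\max_{v\in\mathcal N}\abs{\tfrac1n\sum_{i=1}^n Z_i(v)},\qquad Z_i(v):=\scal{\xi_i}{v}^2-\mathbb E[\scal{\xi}{v}^2].
\]
Thus it suffices to control the average of the i.i.d.\ centred variables $Z_i(v)$ for a fixed $v\in S^{d-1}$ and then to take a union bound over $\mathcal N$.

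First I would note that, directly from definition~\eqref{eq:A2a}, the scalar $\scal{\xi}{v}$ satisfies $\mathbb E[\abs{\scal{\xi}{v}}^q]^{1/q}\leq\sqrt q$ for all $q\geq 1$; hence $\scal{\xi}{v}^2$ is sub-exponential and so is the centred variable $Z_i(v)$, with $\psi_1$-norm bounded by an absolute constant $K$. I would then apply the Bernstein-type inequality for sums of independent centred sub-exponential random variables (Proposition~5.16 in \citet{ver12}): for every $s>0$,
\[
\mathbb P\bigl[\,\abs{\tfrac1n\sum_{i=1}^n Z_i(v)}>s\,\bigr]\leq 2\exp\bigl(-c\,n\min\{s^2/K^2,\ s/K\}\bigr),
\]
with $c>0$ absolute.

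The key bookkeeping step is to take $s=\max\{\delta,\delta^2\}$ with $\delta=C_\xi\bigl(\sqrt{d/n}+\tau/\sqrt n\bigr)$. A short case analysis shows that whether $\delta\leq 1$ or $\delta\geq 1$ one has $\min\{s^2,s\}=\delta^2$, so that $n\min\{s^2,s\}=n\delta^2=C_\xi^2(\sqrt d+\tau)^2\geq C_\xi^2(d+\tau^2)$. Choosing $C_\xi$ large enough in terms of $K$ (hence only in terms of $\nor{\xi}_{\psi_2}$), say so that $c\,C_\xi^2/K^2\geq\ln 9$, we get $c\,n\delta^2/K^2\geq \ln 9\,(d+\tau^2)\geq d\ln 9+\tau^2$, and a union bound over the $\abs{\mathcal N}\leq 9^d$ points yields
\[
\mathbb P\bigl[\nor{S_n-\Sigma}>\max\{\delta,\delta^2\}\bigr]\leq 2\cdot 9^d\exp\bigl(-c\,n\delta^2/K^2\bigr)\leq 2\,\mathrm{e}^{-\tau^2},
\]
which is the assertion. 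I expect the only genuinely delicate point to be this simultaneous handling of the two regimes $\delta\lessgtr 1$ together with the calibration of $C_\xi$ so that the $\sqrt{d/n}$ term in $\delta$ precisely absorbs the $\ln\abs{\mathcal N}=d\ln 9$ entropy loss from the union bound; the remainder is the routine net-plus-Bernstein template, and the sub-exponential control of $\scal{\xi}{v}^2$ is immediate from the moment bound in~\eqref{eq:A2a}.
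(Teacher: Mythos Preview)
The paper does not give its own proof of this theorem: it merely states the result and refers to Theorem~5.39 and Remark~5.40 in \citet{ver12}. Your proposal reproduces precisely that argument (net reduction on $S^{d-1}$, sub-exponential control of $\scal{\xi}{v}^2$, Bernstein's inequality, then union bound with the $\max\{\delta,\delta^2\}$ bookkeeping), so it is correct and coincides with the approach the paper defers to.
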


\vskip 0.2in
\bibliography{biblio}

\end{document}